\documentclass[12pt]{amsart} 
\usepackage{amsmath,centernot,amssymb,leftindex}

\usepackage[shortlabels]{enumitem}
\setlist[enumerate,1]{label={(\roman*)}}
\usepackage{mathtools}
\usepackage[style=numeric,sortcites]{biblatex}
\AtEveryBibitem{\clearfield{url}}
\usepackage[hidelinks]{hyperref}
\newcommand{\dens}{\mathrm{d}}
\newcommand{\udens}{\bar{\mathrm{d}}}
\newcommand{\ldens}{\underline{\mathrm{d}}}

\newcommand{\mb}{\mathbf}

\newtheorem{theorem}{Theorem}
\newtheorem{lemma}[theorem]{Lemma}
\newtheorem{proposition}[theorem]{Proposition}
\newtheorem{corollary}[theorem]{Corollary}
\newtheorem*{claim}{Claim}
\numberwithin{theorem}{section}
\numberwithin{equation}{section}

\theoremstyle{definition}

\newtheorem{observation}[theorem]{Observation}
\newtheorem{definition}[theorem]{Definition}

\newtheorem{remark}[theorem]{Remark}

\newtheorem{conjecture}[theorem]{Conjecture}
\newtheorem{problem}{Problem}

\title[Kneser- and Jin-type inverse theorems]{Kneser- and Jin-type inverse theorems in discrete abelian groups}

\date{\today}

\author{John T. Griesmer}
\email{jtgriesmer@gmail.com}
\address{Department of Applied Mathematics and Statistics, Colorado School of Mines, Golden, Colorado}
\usepackage{amstext}

\subjclass{11P70}

\begin{document}

	\begin{abstract} We characterize the pairs of sets $A, B$ in an arbitrary (countable or uncountable) discrete abelian group $\Gamma$ satisfying $\tilde{m}(A+B)<\tilde{m}(A)+\tilde{m}(B)$, where $\tilde{m}$ is an arbitrary finitely additive translation-invariant probability measure on $\Gamma$, extending M.~Kneser's theorem on Haar measure in compact abelian groups. 
		
	We then characterize, for an arbitrary F{\o}lner sequence or F{\o}lner net $\mb F=(F_{i})_{i\in I}$ on $\Gamma$, those $A$, $B$ satisfying $\ldens_{\mb F}(A+B)<\ldens_{\mb F}(A)+\ldens_{\mb F}(B)$, where $\ldens_{\mb F}(C):=\liminf_{i\in I} |C\cap F_{i}|/|F_{i}|$. This extends Kneser's theorem on  lower asymptotic density in $\mathbb N$.  
	
	We also generalize theorems of Prerna Bihani and Renling Jin by characterizing pairs $A$, $B$ satisfying $\dens^{*}(A+B)<\dens^{*}(A)+\dens^{*}(B)$, where $\dens^{*}$ is upper Banach density on $\Gamma$.

	\end{abstract}
	
	\maketitle

	\section{Introduction}
	
	\subsection{Kneser's theorem for lower asymptotic density}
	
	For $A$, $B\subseteq \mathbb Z$, their \emph{sumset} is $A+B:=\{a+b:a\in A, b\in B\}$.  The \emph{lower asymptotic density}  of a set $C\subseteq \mathbb Z$ is
	\[\ldens(C):=\liminf_{n\to\infty} \tfrac{1}{n}|C\cap \{1,\dots,n\}|.\]	
	We write $\dens(C)$ if the limit exists.  M.~Kneser classified, for each $r\in \mathbb N$, the $r$-tuples of sets $A_{1},\dots, A_{r}\subseteq \mathbb Z$ satisfying $\ldens(A_{1}+A_{2}+\cdots +A_{r})< \sum_{j=1}^{r} \ldens(A_{j})$.  We state the classification for the case $r=2$.
	\begin{theorem}[{\cite[pp.~463--464]{Kneser_AsymptoticDensity}}, Dichtesatz f{\"u}r die asymptotiche Dichte]\label{th:KneserLAD}
		Let $A$, $B\subseteq \mathbb Z$ be nonempty sets satisfying 
		\begin{equation}\label{eq:LADstrictHypothesis}
			\ldens(A+B)<\ldens(A)+\ldens(B).
		\end{equation}
		  Then there is a $k\in \mathbb N$ such that 
		\begin{enumerate}
			\item\label{item:DensPeriodic} $\dens(A+B)=\dens(A+B+k\mathbb Z)$;
			\item\label{item:CDequation} $\dens(A+B)=\dens(A+k\mathbb Z)+\dens(B+k\mathbb Z)-\frac{1}{k}$;
			\item\label{item:Cofinite} $A+B$ is cofinite in $(A+B+k\mathbb Z)\cap \mathbb N $.			
		\end{enumerate}	
	\end{theorem}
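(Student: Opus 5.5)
Since this is Kneser's classical theorem, I would prove it along the lines of Kneser's original argument, organized around a finite-group Kneser theorem and an induction on the "defect." A few preliminary reductions come first. Because $A,B$ are subsets of $\mathbb Z$ and only their intersections with $\mathbb N$ matter for $\ldens$, we may translate so that $0\in A\cap B$ and $A,B\subseteq \mathbb N\cup\{0\}$. The hypothesis \eqref{eq:LADstrictHypothesis} forces $\ldens(A),\ldens(B)>0$ and $\ldens(A+B)<1$. The core tool is a finite, interval-localized addition theorem: for $n\in\mathbb N$ write $A(n)=|A\cap[1,n]|$. The statement to aim for is the following. For every $n$, either $(A+B)(n)\ge A(n)+B(n)-c$ with $c$ bounded, or there is a modulus $k$ and a stabilizer-type structure on $[1,n]$. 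I would obtain this by applying the Kneser/Dyson $e$-transform to the pair $(A\cap[0,n],B\cap[0,n])$. Each transform $A'=A\cup(B+e)$, $B'=B\cap(A-e)$ with $e\in A$ preserves $0\in B'$, does not increase $(A'+B')\cap[0,n]$, and preserves $A(n)+B(n)$ counted in the interval. Iterating until $A+B\supseteq A+b$ for all $b\in B$ produces a pair that is "saturated."

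For a saturated pair, I would define $H$ to be the set of periods $h\ge 0$ with $A+B+h\subseteq A+B$ eventually. The central claim is the following: if the inequality $(A+B)(n)\ge A(n)+B(n)$ fails infinitely often by a linear amount, then $A+B$ must contain, from some point on, a full union of residue classes modulo some $k$. This gives \ref{item:Cofinite}, and \ref{item:DensPeriodic} then follows at once because $A+B$ differs from $(A+B+k\mathbb Z)\cap\mathbb N$ by a finite set. To identify $k$, I would pass to the images $\bar A,\bar B\subseteq \mathbb Z/k\mathbb Z$. Kneser's theorem for the finite group $\mathbb Z/k\mathbb Z$ gives $|\bar A+\bar B|\ge|\bar A+H'|+|\bar B+H'|-|H'|$ with $H'$ the stabilizer. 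By choosing $k$ minimal (that is, replacing $k$ by $k|H'|$ appropriately) we may assume $\bar A+\bar B$ has trivial stabilizer. Then $\dens(A+B)=|\bar A+\bar B|/k$.

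For \ref{item:CDequation}, I would argue by contradiction. If $|\bar A+\bar B|\ge|\bar A|+|\bar B|$, then $\dens(A+B)\ge \dens(A+k\mathbb Z)+\dens(B+k\mathbb Z)\ge \ldens(A)+\ldens(B)$, which contradicts \eqref{eq:LADstrictHypothesis}. So Kneser in $\mathbb Z/k\mathbb Z$ with trivial stabilizer forces $|\bar A+\bar B|=|\bar A|+|\bar B|-1$, and dividing by $k$ gives \ref{item:CDequation}.

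The main obstacle is the existence of the period $k$, i.e.\ showing that a density deficit forces eventual periodicity of $A+B$. My plan for this step is as follows. Pick $n$ with $(A+B)(n)/n$ close to $\ldens(A+B)$ and $A(n)/n$, $B(n)/n$ at least $\ldens(A)-\epsilon$ and $\ldens(B)-\epsilon$. Then look at the largest gap structure of $\mathbb N\setminus(A+B)$ below $n$, using the Mann-type argument: take $m\le n$ the last element missing from $A+B$ before a long run, and count $A(m)+B(m)$ against $(A+B)(m)$. Repeated $e$-transforms then show that the complement of $A+B$ up to $n$ is a union of residue classes modulo some $k\le 1/(\ldens(A)+\ldens(B)-\ldens(A+B))$, up to boundary errors. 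The uniform bound on $k$ is what makes a single $k$ work for all large $n$, via pigeonhole over infinitely many $n$. Upgrading "periodic up to density zero" to "cofinite in the periodic set" would use the sumset once more: any missing element of $(A+B+k\mathbb Z)$ beyond some point would propagate a deficit contradicting the finite Kneser bound on long intervals.
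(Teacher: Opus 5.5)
The paper does not prove this theorem. It is quoted from Kneser, with a pointer to Halberstam--Roth (Chapter~I, \S\S7--10), and the paper says explicitly that its own F{\o}lner-net version, Theorem~\ref{th:FolnerZ}, does not recover it, in particular not conclusion~\ref{item:Cofinite}. So your outline has to stand on its own. Transform arguments on intervals are the classical route, but your outline has a genuine gap at the step that carries the content of the theorem: producing a modulus $k$ for which $A+B$ eventually coincides with $A+B+k\mathbb Z$. You assert that repeated $e$-transforms make the complement of $A+B$ up to $n$ a union of residue classes modulo some $k\le 1/\delta$, where $\delta=\ldens(A)+\ldens(B)-\ldens(A+B)$, but you give no argument. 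The tools you set up for it also do not work as written.

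\begin{itemize}
\item Your saturation condition ``$A+B\supseteq A+b$ for all $b\in B$'' holds for every pair, so it is vacuous. Presumably you mean $e+B\subseteq A$ for all $e\in A$.
\item More seriously, for the truncated pair $(A\cap[0,n],B\cap[0,n])$ the transform does not preserve $A(n)+B(n)$. If $b\in B\setminus(A-e)$ and $e+b>n$, then $b$ is removed from $B'$, while the new element $e+b$ of $A'$ lies outside $[0,n]$. So $A'(n)+B'(n)$ can drop, and controlling exactly this loss is where interval-localized transform arguments require real work.
\item The uniform bound $k\le 1/\delta$, which your pigeonhole over scales relies on, would itself have to come out of that missing local theorem. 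Globally it is a consequence of conclusion~\ref{item:CDequation} (cf.\ Corollary~\ref{cor:Q}).
\end{itemize}

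Even granting approximate periodicity modulo a single $k$ at infinitely many scales, conclusion~\ref{item:Cofinite} would not follow. Statements about counts in $[1,n]$, at any scales, are insensitive to deleting a density-zero set, so they cannot by themselves give cofiniteness. You also give no mechanism by which one missing element of $A+B+k\mathbb Z$ would ``propagate a deficit''. What is needed is a finite, interval-localized structure theorem strong enough to constrain individual elements of $(A+B+k\mathbb Z)\setminus(A+B)$, not merely counts. Establishing such a result is the technical core of Kneser's proof and of the Halberstam--Roth treatment.

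The final step of your outline does work. Once \ref{item:DensPeriodic} and \ref{item:Cofinite} hold for some $k$, passing to the stabilizer of $\bar A+\bar B$ and applying Kneser's theorem in the finite quotient gives \ref{item:CDequation}, exactly as in the paper's Lemma~\ref{lem:FItoKneser}. Note, though, that the new modulus is $k/|H'|$, not $k|H'|$.

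Two smaller slips:
\begin{itemize}
\item \eqref{eq:LADstrictHypothesis} does not force $\ldens(A+B)<1$; take $A=B=\mathbb N$.
\item The reduction to $A,B\subseteq\mathbb N\cup\{0\}$ is not justified by saying only $A\cap\mathbb N$ and $B\cap\mathbb N$ matter for $\ldens$. Negative elements of $A$ or $B$ do contribute to $(A+B)\cap\mathbb N$.
\end{itemize}
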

See \S7-\S10 of Chapter I in \cite{Halberstam_Roth_Sequences} for another proof of Theorem \ref{th:KneserLAD}.

Assertion \ref{item:DensPeriodic} in Theorem \ref{th:KneserLAD} reduces the analysis of pairs satisfying (\ref{eq:LADstrictHypothesis}) to the problem of classifying subsets $C$, $D$ of a finite abelian group $G$ satisfying $|C+D|<|C|+|D|$.  Assertion \ref{item:CDequation} further specializes to those $C$ and $D$ satisfying $|C+D|=|C|+|D|-1$.  Such pairs were classified by Kemperman \cite{Kemperman_SmallSumsets}; see \cite{Grynkiewicz_Quasiperodic_Kemperman}, \cite{Lev_CriticalPairs}, or Chapters 6 and 9 of \cite{Grynkiewicz_Structural} for discussion.

Kneser's \cite[Satz 1]{Kneser_SummenmengenLokalkompakten} (stated as Theorem \ref{th:SummenmengenSatz1} below) is an analogue of Theorem \ref{th:KneserLAD} where $\mathbb Z$ is replaced by a locally compact abelian group $G$ and $\ldens$ is replaced by Haar measure.  From this he deduces Satz 6 and Satz 6a of \cite{Kneser_SummenmengenLokalkompakten}, partially generalizing Theorem \ref{th:KneserLAD} to discrete abelian groups $\Gamma$.  Noting that Satz 6 and Satz 6a apply only to a very restricted class of subsets $A$, $B$ of $\Gamma$, he remarks\footnote{cf.~the remarks at the end of \cite[Section 1]{Kneser_SummenmengenLokalkompakten}.  Translated by the present author: 
 
	``The class of almost periodic subsets of $\mathbb Z^{d}$ is very small, however, and  the result of Satz 6a is of interest insofar as only a few results on these questions in several dimensions have been available so far -- to my knowledge, none at all for the asymptotic density considered here.  Cf. {\sc{Kasch}}, {\sc F}.: \emph{Wesentliche Komponenten bei Gitterpunktmengen. Erscheint demn{\"a}chst im J.~reine angew.~ Math.}~and the literature cited there.''}  that no analogue of Theorem \ref{th:KneserLAD} in known for lower asymptotic density in $\mathbb Z^{d}$ for any $d\geq 2$, and suggests two problems.
	\begin{problem}\label{prob:LAD}
		For $d\geq 2$, characterize the sets $A$, $B\subseteq \mathbb Z^{d}$ satisfying $\ldens(A+B)<\ldens(A)+\ldens(B)$.  
	\end{problem}
	
	 \cite{Kneser_SummenmengenLokalkompakten} does not explicitly define $\ldens$ for subsets of $\mathbb Z^{d}$, and there are several natural ways to do so. Our results below will cover any such definition. 
		
	\begin{problem}\label{prob:Mean}Given a discrete abelian group $\Gamma$ and a finitely additive, translation-invariant measure $\tilde{m}$ defined on $\mathcal P(\Gamma)$, characterize the sets  $A$, $B\subseteq \Gamma$ satisfying $\tilde{m}(A+B)<\tilde{m}(A) + \tilde{m}(B)$.
	\end{problem}
	\cite{Kneser_SummenmengenLokalkompakten} uses ``$\delta$,'' rather than ``$\tilde{m}$,''  to denote a finitely additive, translation-invariant probability measure, and calls such a measure a ``density.''

	    Addressing Problem \ref{prob:LAD}, our Theorem \ref{th:MainFolnerStrict} below is an analogue of Theorem \ref{th:KneserLAD} which applies to lower asymptotic  density in $\mathbb Z^{d}$, and more generally to lower density along a F{\o}lner sequence or F{\o}lner net in a discrete abelian group.  In \S\ref{sec:Extending} we state Theorem \ref{th:FolnerZ}, a special case of Theorem \ref{th:MainFolnerStrict},  and explain why its conclusion is necessarily weaker than the conclusion of Theorem \ref{th:KneserLAD}.
	
	Theorem \ref{th:MainOneMeanStrict} below addresses Problem \ref{prob:Mean}.  For the sake of exposition we begin by generalizing to other densities in $\mathbb Z$.

\subsection{Extending Theorem \ref{th:KneserLAD} in the integers.}\label{sec:Extending} A natural generalization of lower asymptotic density replaces the sequence of intervals $\{1,\dots,n\}$ with a sequence $\mb F=(F_{n})_{n\in \mathbb N}$ of finite subsets of $\mathbb Z$ satisfying $\lim_{n\to \infty} \frac{|(F_{n}+t)\triangle F_{n}|}{|F_{n}|}=0$ for all $t\in \mathbb Z$.  Such an $\mb F$ is called a \emph{F{\o}lner sequence}.  For $A\subseteq \mathbb Z$, the \emph{lower density of $A$ along} $\mb F$ is defined as $\ldens_{\mb F}(A):=\liminf_{n\to\infty} |A\cap F_{n}|/|F_{n}|$, and the \emph{upper density of $A$ along $\mb F$} is $\udens_{\mb F}(A):=\limsup_{n\to\infty} |A\cap F_{n}|/|F_{n}|$.  We write $\dens_{\mb F}(A)$ if the limit exists.

Our aim is to extend Theorem \ref{th:KneserLAD} by replacing $\ldens$ with $\ldens_{\mb F}$ for an arbitrary F{\o}lner sequence $\mb F$.  Conclusion \ref{item:Cofinite} of Theorem \ref{th:KneserLAD} cannot be recovered in this generality.  For example, setting $F_{n}=\{2^{2^{n}}+1, 2^{2^{n}}+2,\dots,2\cdot 2^{2^{n}}\}$ and $A=B=\bigcup_{n=1}^{\infty} F_{n}$, we have $\dens_{\mb F}(A)=\dens_{\mb F}(B)=\dens_{\mb F}(A+B)=1$. For each $k\in \mathbb N$ we have $A+B+k\mathbb Z=\mathbb Z$, and $A+B$ is not cofinite in $(A+B+k\mathbb Z)\cap \mathbb N$.

Assuming $\ldens_{\mb F}(A+B)<\ldens_{\mb F}(A)+\ldens_{\mb F}(B)$, the analogue of conclusion \ref{item:CDequation} is easy to obtain once one finds \emph{some} $k\in \mathbb N$ satisfying $\dens_{\mb F}(A+B)=\dens_{\mb F}(A+B+k\mathbb Z)$; see \S\ref{sec:RecoverKEQ} for details.  

Thus we abandon hope for \ref{item:Cofinite} and postpone our interest in \ref{item:CDequation}.  Our proposed generalization of Theorem \ref{th:KneserLAD} is now this appealing, but false, conjecture:
\begin{conjecture}\label{conj:TooStrong}
	If $A$, $B\subseteq \mathbb Z$ are nonempty and $\mb F$ is a F{\o}lner sequence satisfying $\ldens_{\mb F}(A+B)<\ldens_{\mb F}(A)+\ldens_{\mb F}(B)$, then there is a $k\in \mathbb N$ such that $\dens_{\mb F}(A+B)=\dens_{\mb F}(A+B+k\mathbb Z)$.
\end{conjecture}
\begin{remark}\label{rem:Example} Conjecture \ref{conj:TooStrong} fails for $F_{n}=\{1,2,\dots, b_{n}\}$ with $b_{n+1}/b_{n}\to \infty$.  With such $\mb F$, let $A=B=\bigcup_{n\in \mathbb N}\{\lfloor b_{n}/2\rfloor,\dots, b_{n}\}$.  Then $\dens_{\mb F}(A)=\dens_{\mb F}(B)=\dens_{\mb F}(A+B)=1/2$, but for every $k\in \mathbb Z$, we have $\dens_{\mb F}(A+B)<\dens_{\mb F}(A+B+k\mathbb Z)=1$. However, with $\Psi_{n}=\{\lfloor b_{n}/2\rfloor,\dots,b_{n}\}$, we have $1=\dens_{\mb \Psi}(A+B)<\dens_{\mb \Psi}(A)+\dens_{\mb \Psi}(B)=2$ and $\dens_{\mb \Psi}(A+B)=\dens_{\mb \Psi}(A+B+\mathbb Z)$.  Thus we recover the hypothesis and the conclusion of Conjecture \ref{conj:TooStrong} by passing to subsets of $F_{n}$ which form a new F{\o}lner sequence.  Our first result says  this can be done in general, possibly after passing to a subsequence of $\mb F$.
\end{remark}

\begin{theorem}\label{th:FolnerZ}
	Let $A$, $B\subseteq \mathbb Z$ be nonempty.  Then there is a $k\in \mathbb N$ (depending only on $A+B$) such that if $\mb F=(F_{n})_{n\in \mathbb N}$ is a F{\o}lner sequence satisfying 
\begin{equation}\label{eq:ZDeltaHyp}
	\delta:= \ldens_{\mb F}(A)+\ldens_{\mb F}(B)-\ldens_{\mb F}(A+B)>0,
\end{equation}
	then there is a subsequence $(F_{n_{j}})_{j\in \mathbb N}$ and sets $\Psi_{j}\subseteq F_{n_{j}}$ such that $(\Psi_{j})_{j\in \mathbb N}$ is a F{\o}lner sequence satisfying 
	\begin{enumerate}
	\item\label{eq:FolnerBound}$\liminf_{j\to\infty} |\Psi_{j}|/|F_{n_{j}}|\geq k\delta$;
	\item\label{eq:Special1} 
	$\dens_{\mb \Psi}(A+B)=\dens_{\mb \Psi}(A+B+k\mathbb Z).$
	\item\label{eq:ReflectGap} $\dens_{\mb \Psi}(A)+\dens_{\mb \Psi}(B)-\dens_{\mb \Psi}(A+B)\geq \delta$;
\end{enumerate}
\end{theorem}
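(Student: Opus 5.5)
Here is the plan: reduce to invariant means and the Kneser-type theorem for means (Theorem \ref{th:MainOneMeanStrict}), then recover a F{\o}lner sequence by localizing. Pass to a subsequence of $\mb F$ along which $|A+B\cap F_n|/|F_n|$ converges to $\ldens_{\mb F}(A+B)$. Along this subsequence, take a weak-$*$ limit point $\tilde m$ of the averaging measures $\mu_n=|F_n|^{-1}\sum_{x\in F_n}\delta_x$. Because $\mb F$ is F{\o}lner, $\tilde m$ is translation invariant. It satisfies $\tilde m(A)\ge \ldens_{\mb F}(A)$, $\tilde m(B)\ge\ldens_{\mb F}(B)$ and $\tilde m(A+B)=\ldens_{\mb F}(A+B)$. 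Hence $\tilde m(A)+\tilde m(B)-\tilde m(A+B)\ge\delta>0$.

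The key point is the choice of $k$, which must depend only on $A+B$. I would let $k$ range over the finitely many periods $H=k\mathbb Z$ for which $A+B$ can be essentially $H$-periodic under some mean. This uses the Kneser-type theorem for means: if $\tilde m(A+B)<\tilde m(A)+\tilde m(B)$, then there is a finite-index $H$ with $\tilde m(A+B)=\tilde m(A+B+H)$ and $\tilde m(A+B)\ge \tilde m(A+H)+\tilde m(B+H)-\tilde m(H)$. It follows that $\tilde m(H)\ge \delta$, so $[\mathbb Z:H]\le 1/\delta$. Since $\delta$ is not fixed in advance, I cannot take $k$ as a single minimal period. Instead I would fix $k$ to be a canonical period of $A+B$. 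Concretely, take $k$ so that $k\mathbb Z$ is the stabilizer subgroup appearing for $A+B$, and show through the structure (the set $(A+B+H)\setminus(A+B)$ having mean zero) that any admissible $H$ can be replaced by a subgroup determined by $A+B$ alone. Alternatively, one could take the lcm of all admissible $k$. I expect this independence to require care, and I would first check whether the paper's Theorem \ref{th:MainOneMeanStrict} already gives a canonical subgroup.

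Next, construct the sets $\Psi_j$. Let $C=A+B+k\mathbb Z$; it is a union of cosets of $k\mathbb Z$. From $\tilde m(C\setminus(A+B))=0$, I want sets $\Psi_j\subseteq F_{n_j}$ that are nearly $k\mathbb Z$-invariant. The plan is to take $\Psi_j$ to be the union of those $k\mathbb Z$-coset blocks (long $k$-progressions inside $F_{n_j}$) on which $A+B$ nearly fills $C$, together with the corresponding portion of $F_{n_j}\setminus C$. The mean-zero condition is not literally the convergence of $\mu_n$, but it does give, along a further subsequence, a vanishing relative proportion of $C\setminus(A+B)$ in $F_{n_j}$. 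So I may choose $\Psi_j$ to be all of $F_{n_j}$ minus a small boundary, and then conclusion (ii) holds. If instead only the weaker inequality through $\tilde m$ is available, I would restrict to the portion of $F_{n_j}$ on which $\tilde m$ concentrates, using a Lebesgue-density style argument over tilings of $F_{n_j}$ by intervals of length $L\to\infty$.

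Finally, apply the inequality $\tilde m(H)\ge\delta$, which in cosets reads $|\Psi_j|/|F_{n_j}|\gtrsim k\delta$: the restricted sets retain at least $k\delta$ of the mass. Conclusion (iii) follows from the fact that densities on $\Psi_j$ are the renormalized counts and that removing $A+B$-dense regions does not shrink the deficit. The F{\o}lner property of $\Psi_j$ follows because the $\Psi_j$ are unions of long intervals.

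The main obstacle is making the restriction step compatible with the F{\o}lner property while keeping (i) and (iii) at the same time.
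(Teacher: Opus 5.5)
Your plan rests on a ``Kneser-type theorem for means'': that $\tilde m(A+B)<\tilde m(A)+\tilde m(B)$ yields a finite index $H$ with $\tilde m(A+B)=\tilde m(A+B+H)$. For a limit mean $\tilde m$ of F{\o}lner averages this is false, and the paper says so right after Corollary \ref{cor:Q}. Take the example of Remark \ref{rem:Example}: $F_n=\{1,\dots,b_n\}$ with $b_{n+1}/b_n\to\infty$, and $A=B=\bigcup_n\{\lfloor b_n/2\rfloor,\dots,b_n\}$. Every limit point $\tilde m$ has $\tilde m(A)=\tilde m(B)=\tilde m(A+B)=1/2$, yet $\tilde m(A+B+k\mathbb Z)=1$ for every $k$.

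So $\tilde m(C\setminus(A+B))=0$ fails for every candidate period. Your main construction ($\Psi_j=F_{n_j}$ minus a boundary) would prove the false Conjecture \ref{conj:TooStrong}. The consequences you draw from the premise (the bound $[\mathbb Z:H]\le 1/\delta$, the search for a canonical $k$) have no foundation. This is also why Theorem \ref{th:MainOneMeanStrict} states its conclusion for a restriction $m'$ of $m$, not for $m$ itself.

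The Kneser-type conclusion holds only for \emph{extreme} invariant means (Theorem \ref{th:OneExtreme}). The real content of ``$k$ depends only on $A+B$'' is that a single $K$ works for every extreme $\nu$ with $\nu(A+B)<\nu(A)+\nu(B)$. Each such $\nu$ has $\nu(A+B)=\dens^{*}(A+B)=\dens^{*}(A+B+K)$, and Lemma \ref{lem:TwoSubgroups} then pins down $K$. Taking an lcm of ``admissible'' periods would not help, since a larger $k$ makes (i) harder.

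Your fallback, restricting to ``the portion of $F_{n_j}$ on which $\tilde m$ concentrates,' points the right way. In the example the correct choice is $\Psi_j=\{\lfloor b_{n_j}/2\rfloor,\dots,b_{n_j}\}$. But you give no criterion that selects that portion and no reason (ii) should hold on it, and block-wise density arguments do not localize a sumset. The missing idea is the following chain:
\begin{enumerate}
\item Decompose $\tilde m=\int\nu\,d\sigma(\nu)$ over extreme invariant means (Corollary \ref{cor:ChoquetForMeans}).
\item Keep only $E=\{\nu:\nu(A)+\nu(B)-\nu(A+B)>0\}$, on which $\nu(A+B)=\dens^{*}(A+B+K)$.
\item Represent $\int_E\nu\,d\sigma$ by an invariant $w\in L^{2}(\tilde m)$ with $0\le w\le 1_{\mathbb Z}$ (Lemma \ref{lem:BoundRepresentative}).
\item Modify $w$ into a $\tau$-invariant indicator element $v$ (Lemmas \ref{lem:Final} and \ref{lem:StepApproximation}).
\item Convert $v=\lim 1_{C_n}$ into sets $\Psi_j=F_{n_j}\cap C_{n_j}$ (Lemma \ref{lem:CauchyAndInvariantSetsToFolner}). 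A diagonal argument then returns from nets to a subsequence, since weak$^{*}$ limit points are only subnet limits.
\end{enumerate}

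Your derivations of (i) and (iii) also fail. The relation $\tilde m(H)=1/k\ge\delta$ only says $k\delta\le 1$; it gives no lower bound on $|\Psi_j|/|F_{n_j}|$. The paper gets $\tilde m(v)\ge k\delta$ by comparing two facts. The restricted mean $m'$ carries deficit $m'(g)\ge\delta/\tilde m(v)$, while Corollary \ref{cor:Q} (proved again via extreme means) caps every invariant mean's deficit at $1/k$. Conclusion (iii) is not automatic either: as Remark \ref{rem:Sad} explains, the naive support of the restriction can lose deficit. That is why Lemma \ref{lem:StepApproximation} chooses the level sets with care.
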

\begin{remark}
Conclusion \ref{eq:ReflectGap} says that $A$ and $B$ are still quite large with respect to $\dens_{\mb \Psi}$.  While foregoing \ref{eq:ReflectGap} would simplify the proof somewhat, we include it with an eye toward future applications.
\end{remark}

Theorem \ref{th:FolnerZ} is a special case of Theorem \ref{th:MainFolnerStrict} below, where the ambient group $\mathbb Z$ is replaced by a discrete abelian group and $\mb F$ is an arbitrary F{\o}lner sequence or F{\o}lner net.

\begin{remark}
The hypothesis (\ref{eq:ZDeltaHyp}) in Theorem \ref{th:FolnerZ} can be weakened to
\begin{equation}\label{eq:WeakZDeltaHyp}
\delta:=	\limsup_{n\to\infty} |A\cap F_{n}|/|F_{n}| + |B\cap F_{n}|/|F_{n}| - |(A+B)\cap F_{n}|/|F_{n}|>0
\end{equation}
without altering the conclusion: assuming (\ref{eq:WeakZDeltaHyp}), a subsequence of $\mb F$ satisfies (\ref{eq:ZDeltaHyp}).
\end{remark}

\begin{remark} Theorem \ref{th:FolnerZ} does not recover Theorem \ref{th:KneserLAD}: Theorem \ref{th:KneserLAD} uses the F{\o}lner sequence $F_{n}=\{1,\dots,n\}$ and does not require passing to subsets or to a subsequence.  Nor does Theorem \ref{th:KneserLAD} immediately imply Theorem \ref{th:FolnerZ}, even when $\mb F$ is a subsequence of $(\{1,\dots,n\})_{n\in \mathbb N}$. Possibly $\dens_{\mb F}(A+B)<\dens_{\mb F}(A)+\dens_{\mb F}(B)$, while $\ldens(A+B)\geq \ldens(A)+\ldens(B)$.  Such an example is given by increasing sequences  $a_{n}, b_{n}\in \mathbb N$ with $a_{n+1}/b_{n}\to 3$, $b_{n}/a_{n}\to \infty$, $F_{n} = \{1,\dots,b_{n}\}$, and $A=B=\bigcup_{n \in \mathbb N} [a_{n},b_{n}]$.  Here $\dens_{\mb F}(A)=\dens_{\mb F}(B)=\dens_{\mb F}(A+B)= 1$, while $\ldens(A)=\ldens(B)=1/3$, and $\ldens(A+B)= 2/3$.
	
	It would be interesting to prove Theorem \ref{th:KneserLAD} using Theorem \ref{th:FolnerZ} as a building block.
\end{remark}

\subsection{Upper asymptotic density is different}  
The \emph{upper asymptotic density} of a set $A\subseteq \mathbb Z$ is $\udens(A):=\limsup_{n\to\infty} \frac{1}{n}|A\cap \{1,\dots,n\}|$. 
Jin \cite{Jin_InverseProblem1, Jin_InverseProblem2,Jin_InverseProblemSolution} proves that if $A\subseteq \mathbb N$ satisfies $\gcd(A-\min(A))=1$ and $\udens(A)<1/2$, then $\udens(A+A)\geq \tfrac{3}{2}\udens(A)$.  Theorem 1.3 of \cite{Jin_InverseProblemSolution} classifies the sets satisfying $\udens(A+A)= \tfrac{3}{2}\udens(A)$ and the preceding hypotheses. Examples very different from those satisfying $\underbar{d}(A+A)<\underbar{d}(A)+\underbar{d}(A)$ are given:    $A\subseteq \mathbb N$ may satisfy  $0<\udens(A+A)=\tfrac{3}{2}\udens(A)<1$, while $A\cap (n+k\mathbb Z)\neq \varnothing$ for every $n, k\in \mathbb N$.  Bordes \cite{Bordes_sumsets} classifies sets $A\subseteq \mathbb Z$ where $\udens(A)$ is small and $\udens(A+A)<\tfrac{5}{3}\udens(A)$.

Our proofs say nothing about distinct pairs $A, B\subseteq \mathbb N$ satisfying $\udens(A+B)<\udens(A)+\udens(B)$.  For $A=B$ we get the following corollary of Theorem \ref{th:FolnerZ}, with a weaker conclusion than the results mentioned above.
\begin{corollary}\label{cor:UD}
	If $A\subseteq \mathbb Z$ satisfies $\delta:=2\udens(A)-\udens(A+A)>0$, then there is a $k\in \mathbb N$ and a F{\o}lner sequence $\Psi_{n}\subseteq \mathbb N$ such that \begin{enumerate}[(i)]
		\item $\liminf_{n\to\infty}|\Psi_{n}|/\max \Psi_{n} \geq k\delta$;
		\item $2\dens_{\mb \Psi}(A)-\dens_{\mb \Psi}(A+A)\geq \delta$;
		\item $\dens_{\mb \Psi}(A+A)=\dens_{\mb \Psi}(A+A+k\mathbb Z)$.
	\end{enumerate}
\end{corollary}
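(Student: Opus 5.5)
We will deduce Corollary \ref{cor:UD} from Theorem \ref{th:FolnerZ} applied with $B=A$, using a Følner sequence of initial intervals chosen to witness the upper densities. The key point is that a single sequence of lengths $n_j$ can be chosen along which $|A\cap\{1,\dots,n\}|/n\to\udens(A)$. Along that sequence we automatically have $\limsup_j |(A+A)\cap\{1,\dots,n_j\}|/n_j\le \udens(A+A)$.

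First, if $A\cap\mathbb N$ were empty then $\udens(A)=0$ and $\delta\le 0$, so we may assume $A\cap\mathbb N\neq\varnothing$. Choose $n_j\to\infty$ with $|A\cap[1,n_j]|/n_j\to\udens(A)$. Passing to a further subsequence, we may assume $|(A+A)\cap[1,n_j]|/n_j$ converges to some $c\le\udens(A+A)$. Set $F_j=\{1,\dots,n_j\}$; intervals of length tending to infinity form a Følner sequence. Then $\dens_{\mb F}(A)=\udens(A)$ and $\dens_{\mb F}(A+A)=c$. Hence
\[
\ldens_{\mb F}(A)+\ldens_{\mb F}(A)-\ldens_{\mb F}(A+A)=2\udens(A)-c\ge\delta>0 .
\]

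Now apply Theorem \ref{th:FolnerZ} with $B=A$; the gap there is $\delta'=2\udens(A)-c\ge\delta$. It yields a $k$ (depending only on $A+A$), a subsequence $F_{n_{j_i}}$, and a Følner sequence $\Psi_i\subseteq F_{n_{j_i}}\subseteq\mathbb N$ with the following properties:
\begin{itemize}
\item[(a)] $\liminf_i|\Psi_i|/|F_{n_{j_i}}|\ge k\delta'\ge k\delta$;
\item[(b)] $\dens_{\mb\Psi}(A+A)=\dens_{\mb\Psi}(A+A+k\mathbb Z)$;
\item[(c)] $2\dens_{\mb\Psi}(A)-\dens_{\mb\Psi}(A+A)\ge\delta'\ge\delta$.
\end{itemize}
Conclusions (ii) and (iii) of the corollary follow immediately from (c) and (b). For (i), note that $\Psi_i\subseteq\{1,\dots,n_{j_i}\}$ gives $\max\Psi_i\le n_{j_i}=|F_{n_{j_i}}|$. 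Therefore $|\Psi_i|/\max\Psi_i\ge|\Psi_i|/|F_{n_{j_i}}|$, and (i) follows from (a). Finally, reindex $\Psi_i$ as $\Psi_n$.

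The only delicate step is arranging that the density of $A$ and the density of $A+A$ both behave correctly along one common sequence. This is exactly why we choose $n_j$ to realize the limsup for $A$ rather than for $A+A$: the inequality for $A+A$ then comes for free from the definition of $\udens$. Passing to the convergent subsequence for $A+A$ is not strictly needed, since Theorem \ref{th:FolnerZ} only uses liminfs. We do it so that $\delta'$ is a genuine limit, which keeps the bookkeeping clean.
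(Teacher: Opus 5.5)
Your proposal is correct and is essentially the paper's proof. You take initial intervals $\{1,\dots,n_j\}$ realizing $\udens(A)$, note that the density of $A+A$ along them is at most $\udens(A+A)$, and apply Theorem \ref{th:FolnerZ} with $B=A$; your extra subsequence making the density of $A+A$ converge, and the explicit bound $\max\Psi_i\le n_{j_i}$ for (i), just fill in small points the paper leaves implicit.
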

\begin{proof}
	Fix $A$ as in the hypothesis, and let $F_{n}=\{1,\dots,b_{n}\}$ satisfy  $\udens(A)=\dens_{\mb F}(A)$.  Note that $\dens_{\mb F}(A+A)\leq \udens(A+A)$, so $\dens_{\mb F}(A)+\dens_{\mb F}(A)-\dens_{\mb F}(A+A)\geq \delta$.  Now apply Theorem \ref{th:FolnerZ} to get a subsequence $(b_{n_{j}})_{j\in \mathbb N}$ and a F{\o}lner sequence $\Psi_{j}\subseteq \{1,\dots, b_{n_{j}}\}$ satisfying $\dens_{\mb \Psi}(A+A)=\dens_{\mb \Psi}(A+A+k\mathbb Z)$, $2\dens_{\mb \Psi}(A)-\dens_{\mb \Psi}(A+A)\geq \delta$, and $\lim_{j\to\infty}|\Psi_{j}|/b_{n_{j}}\geq k\delta$.
\end{proof}

\subsection{Upper Banach density in \texorpdfstring{$\mathbb Z$}{Z}}\label{sec:IntroUBD}

The \emph{upper Banach density} of a set $A\subseteq \mathbb Z$ is
\begin{equation}\label{eq:Zubd}
	\dens^{*}(A):=\sup\{\udens_{\mb F}(A):\mb F \text{ is a F{\o}lner sequence}\}.
\end{equation}

Bihani and Jin \cite[Theorem 1.1]{BihaniJin_Kneser} classified the sets $A\subseteq \mathbb N$ satisfying $\dens^{*}(A+A)<2\dens^{*}(A)$.  Jin \cite[Theorem 1.4]{Jin_CharacterizingStructure} classified the pairs $A, B\subseteq \mathbb N$ satisfying $\dens^{*}(A+B)<\dens^{*}(A)+\dens^{*}(B)$.  In \cite[Theorem 1.8]{Griesmer_SmallSumPairs} we extended the latter classification to subsets of countable abelian groups.  See Definition \ref{def:UBD} for the definition of upper Banach density in discrete abelian groups.  Here is the relevant part of \cite{Griesmer_SmallSumPairs}, Theorem 1.8.\footnote{\cite{BihaniJin_Kneser}, \cite{Jin_CharacterizingStructure}, and \cite{Griesmer_SmallSumPairs} provide additional detail.}
\begin{theorem}\label{th:CountableUBD}
	If $\Gamma$ is a countable abelian group and $A$, $B\subseteq \Gamma$ are nonempty sets satisfying $\dens^{*}(A+B)<\dens^{*}(A)+\dens^{*}(B)$, then there is a finite index subgroup $K\leq \Gamma$ satisfying $\dens^{*}(A+B)=\dens^{*}(A+B+K).$
\end{theorem}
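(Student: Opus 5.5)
Theorem \ref{th:CountableUBD} is quoted from \cite[Theorem 1.8]{Griesmer_SmallSumPairs}, but I would reprove it here from a Kneser-type theorem for invariant means, in the spirit of the approach to Problem \ref{prob:Mean}.

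\emph{Step 1: realize upper Banach density by an invariant mean.} For countable $\Gamma$ there is a F{\o}lner sequence $\mb F$ with $\dens^{*}(A)=\dens_{\mb F}(A)$. Passing to a subsequence, I would arrange that $\dens_{\mb F}(B)$ and $\dens_{\mb F}(A+B)$ also exist. Since $B$ is arbitrary, $\dens_{\mb F}(B)$ may be smaller than $\dens^{*}(B)$. To fix this, I would use the standard trick of replacing $B$ by a translate-limit. Choose a F{\o}lner sequence $\mb G$ with $\dens_{\mb G}(B)=\dens^{*}(B)$. Then build a combined F{\o}lner sequence whose sets have the form $F_n+G_m$ (or $F_n$ translated along $G_m$), with $\mb F$-scales much larger than $\mb G$-scales. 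Averaging shows that some translate $t_n$ gives $|(B-t_n)\cap F_n|/|F_n|$ close to $\dens^{*}(B)$, while $A$ retains density near $\dens^{*}(A)$ on the corresponding sets.

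\emph{Step 2: pass to the Bohr / measure-preserving setting.} A cleaner route uses the Furstenberg correspondence. Take a weak-$*$ limit $\tilde m$ of averages over $\mb F$; this is an invariant mean with $\tilde m(A)=\dens^{*}(A)$. Since $\dens^{*}$ dominates every invariant mean, $\tilde m(A+B')\le \dens^{*}(A+B')\le \dens^{*}(A+B)$ for every translate $B'$ of a suitable ``shifted limit'' of $B$. The goal is an invariant mean $\tilde m$ satisfying
\[\tilde m(A+B)\le \dens^{*}(A+B)<\dens^{*}(A)+\dens^{*}(B)=\tilde m(A)+\tilde m(B).\]
I would obtain it by a diagonal construction: a mean supported on far-apart F{\o}lner sets $F_n$ for $A$ and $G_n+s_n$ for $B$, chosen so that $A+B$ contains $A\cap F_n+B\cap(G_n+s_n)$. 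Summing a translate of $B$ only translates $A+B$, and $\dens^{*}$ is translation invariant, so the hypothesis becomes $\tilde m(A)+\tilde m(B+s)>\dens^{*}(A+B+s)$. This holds for the same mean after the substitution $B\mapsto B+s$, which is harmless.

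\emph{Step 3: apply the Kneser-type theorem for means.} Feeding this into the paper's main theorem for $\tilde m$ (Theorem \ref{th:MainOneMeanStrict}) gives a finite index subgroup $K$ with $\tilde m(A+B)=\tilde m(A+B+K)$. I then need to upgrade this to $\dens^{*}(A+B)=\dens^{*}(A+B+K)$. The inequality $\le$ is trivial. For $\ge$, Kneser's structure gives $\tilde m(A+B+K)\ge \tilde m(A+K)+\tilde m(B+K)-\tilde m(K)$. The set $A+B+K$ is a union of $K$-cosets, and every invariant mean gives each coset mass $1/[\Gamma:K]$, so $\dens^{*}(A+B+K)=\tilde m(A+B+K)$. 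Hence $\dens^{*}(A+B)\ge\tilde m(A+B)=\tilde m(A+B+K)=\dens^{*}(A+B+K)$.

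\emph{Main obstacle.} The delicate point is Step 2: producing a single invariant mean that simultaneously attains $\dens^{*}(A)$ and $\dens^{*}(B)$ (after translating $B$). I would first try a product/Følner-tiling argument: take $F$-sets with $|A\cap(F+x)|$ near-maximal and large $G$-sets with $B$ near-maximal, and average over $x\in G$, $y\in F$. This forces both densities to be nearly maximal on a set of positive proportion of the translates, and a limit mean can then be extracted.
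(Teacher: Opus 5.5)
\textbf{Gap: Step 2 cannot be carried out, and it is the crux of the argument.} Invariant means are translation invariant, so $\tilde m(B+s)=\tilde m(B)$ and $\tilde m(A+B+s)=\tilde m(A+B)$. Translating $B$ therefore changes nothing. What Step 3 actually needs is a single $\tilde m\in\mathcal M_{\tau}(\Gamma)$ with $\tilde m(A+B)<\tilde m(A)+\tilde m(B)$, and the hypothesis on $\dens^{*}$ does not provide one.

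For a counterexample in $\mathbb Z$, let $A=\bigcup_{k}[4^{2k},4^{2k}+k]$ and $B=\bigcup_{k}[4^{2k+1},4^{2k+1}+k]$. Then $\dens^{*}(A)=\dens^{*}(B)=1$, so the hypothesis holds. But fix $a_{0}\in A$ and $b_{0}\in B$. The set $(A+b_{0})\cap(B+a_{0})$ is finite, and $A+B\supseteq(A+b_{0})\cup(B+a_{0})$. Hence every invariant mean satisfies $m(A+B)\geq m(A)+m(B)$. So neither Theorem \ref{th:OneExtreme} nor Theorem \ref{th:MainOneMeanStrict} ever applies to this pair. The conclusion holds trivially here with $K=\mathbb Z$, but not by your route. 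No F{\o}lner averaging or tiling can repair this, because $A$ and $B$ are dense in disjoint regions.

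Passing instead to genuine limits $A'=\lim(A-s_{n})$ and $B'=\lim(B-t_{n})$, with independent shifts, can produce one mean for the new pair, with $\dens^{*}(A'+B')\leq\dens^{*}(A+B)$. However, $A'+B'$ is then no longer a translate of $A+B$. Moreover, $A'$ and $B'$ can miss cosets that contain sparse points of $A$ and $B$. For instance, if $A=\{1\}\cup A_{0}$ with $A_{0}\subseteq 2\mathbb Z$ and $s_{n}\to\infty$, then $A'$ lies in a single coset of $2\mathbb Z$. So a subgroup $K$ found for $A'+B'$ does not yield $\dens^{*}(A+B+K)\leq\dens^{*}(A+B)$ without a further argument excluding such points. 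That argument needs one mean seeing $A$ densely and a different mean seeing $B$ densely.

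\textbf{How the paper proceeds.} The paper proves the generalization, Theorem \ref{th:MainUBDstrict}, in exactly this two-mean way. It takes extreme invariant means $\nu$, $\eta$ with $\nu(A)=\dens^{*}(A)$ and $\eta(B)=\dens^{*}(B)$, and notes that $\max\{\nu(A+B),\eta(A+B)\}\leq\dens^{*}(A+B)<\nu(A)+\eta(B)$. The two-mean Theorem \ref{th:TwoMeansStrict} then gives $A+B\sim_{\nu}A+B+K$. By Lemma \ref{lem:FiniteIndexMean},
$\dens^{*}(A+B)\geq\nu(A+B)=\nu(A+B+K)=\dens^{*}(A+B+K)$.

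The ingredient missing from your plan is the compact model of Theorem \ref{th:Model}. There $\tilde A_{\nu}$ and $\tilde B_{\eta}$ are built from different means, yet $\mu_{b\Gamma}(\tilde A_{\nu}+\tilde B_{\eta})\leq\nu(A+B)$ while $\mu_{b\Gamma}(\tilde A_{\nu})\geq\nu(A)$ and $\mu_{b\Gamma}(\tilde B_{\eta})\geq\eta(B)$. So Kneser's theorem in $b\Gamma$ applies and yields a finite index stabilizer $\tilde H$. Lemma \ref{lem:HalfCritical} then shows $A\subseteq\tilde A_{\nu}+\tilde H$ and $B\subseteq\tilde B_{\eta}+\tilde H$, so sparse points of $A$ and $B$ cannot enlarge $A+B+H$. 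This is precisely what your limit-of-translates approach would lack.

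\textbf{A smaller point.} Theorem \ref{th:MainOneMeanStrict} gives $m'(A+B)=m'(A+B+K)$ only for a restriction $m'$ of $\tilde m$, not for $\tilde m$ itself; see Remark \ref{rem:Example}. This slip would be harmless in Step 3, since $m'(A+B)\leq\dens^{*}(A+B)$ and $m'(A+B+K)=\dens^{*}(A+B+K)$. But Step 3 has nothing to act on until Step 2 is replaced.
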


Our next result generalizes Theorem \ref{th:CountableUBD} to discrete abelian groups of any cardinality. 
\begin{theorem}\label{th:MainUBDstrict}
	If $\Gamma$ is a discrete abelian group and $A$, $B\subseteq \Gamma$ are nonempty sets satisfying $\dens^{*}(A+B)<\dens^{*}(A)+\dens^{*}(B)$, then there is a finite index subgroup $K\leq \Gamma$ satisfying $\dens^{*}(A+B)=\dens^{*}(A+B+K).$
\end{theorem}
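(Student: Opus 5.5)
We reduce to the countable case, Theorem \ref{th:CountableUBD}, by passing to a suitable countable subgroup $\Gamma_0\leq\Gamma$ containing enough of $A$ and $B$ to witness their upper Banach densities, and then lift the subgroup found in $\Gamma_0$ back to $\Gamma$. We use the standard facts (to be checked against Definition \ref{def:UBD}) that $\dens^{*}(C)$ is the supremum over finite $S\subseteq\Gamma$ of the quantity $\inf_{F}\max_{t\in\Gamma}|C\cap(F+t)|/|F|$, and that upper Banach density is witnessed by Følner nets. Concretely, for each $\varepsilon>0$ and each finite $F$ that is sufficiently invariant under a given finite set, there is a translate $F+t$ with $|A\cap(F+t)|\geq(\dens^{*}(A)-\varepsilon)|F|$, and similarly for $B$.

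\textbf{Step 1: build $\Gamma_0$.} Build $\Gamma_0$ by a countable closure argument. Start with a countable subgroup containing one element of $A$ and one of $B$. At stage $n$, enumerate finite subsets of the current countable subgroup. For each rational $\varepsilon$, choose finite Følner-type sets $F$ in that subgroup; their existence is fine, since any countable abelian group is amenable. Then choose translates $t_A,t_B\in\Gamma$ with $|A\cap(F+t_A)|\geq(\dens^{*}(A)-\varepsilon)|F|$ and the analogous bound for $B$, and adjoin $t_A,t_B$. The union $\Gamma_0$ is countable. We must ensure that the Følner sequence in $\Gamma_0$ produced this way is Følner relative to $\Gamma_0$. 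Then $\dens^{*}_{\Gamma_0}(A\cap\Gamma_0)\geq\dens^{*}(A)$, and likewise for $B$. Upper Banach density only decreases when restricting to a subgroup, via the Følner-net characterization applied to $\Gamma_0$-Følner sets, so
\[
\dens^{*}_{\Gamma_0}\bigl((A\cap\Gamma_0)+(B\cap\Gamma_0)\bigr)\leq\dens^{*}(A+B).
\]
Combining these, the hypothesis transfers to $A_0=A\cap\Gamma_0$ and $B_0=B\cap\Gamma_0$ in $\Gamma_0$.

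\textbf{Step 2: apply the countable case.} Theorem \ref{th:CountableUBD} now gives a finite index $K_0\leq\Gamma_0$ with $\dens^{*}(A_0+B_0+K_0)=\dens^{*}(A_0+B_0)$. This subgroup is attached to $A_0+B_0$ rather than $A+B$, which is where the real difficulty lies.

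\textbf{Step 3 (main obstacle): lift $K_0$ to $\Gamma$.} This is the hardest step, for two reasons. First, $A+B$ restricted to translates of $\Gamma_0$ is larger than $A_0+B_0$. Second, $K_0$ must be extended to a finite index subgroup $K$ of $\Gamma$. For the extension, first arrange during the closure in Step 1 that the construction is stable: also adjoin witnesses for $A+B$ and for $A+B+K'$, for every finite index subgroup $K'$ of the current stage. Finite index subgroups of $\Gamma_0$ need not extend, so alternatively take $K=\Gamma'$ for a suitable finite index subgroup $\Gamma'$ with $\Gamma'\cap\Gamma_0\subseteq K_0$. A cleaner route is to avoid lifting altogether. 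Use the stronger structural output of \cite[Theorem 1.8]{Griesmer_SmallSumPairs}, namely that the obstruction comes from a Bohr-type or compact-group model. Equivalently, replay its proof, which proceeds via a correspondence principle to a compact group and Kneser's Theorem \ref{th:SummenmengenSatz1}. The replay works with Følner nets in place of sequences, using the dual group $\widehat{\Gamma}$ and its Bohr compactification $b\Gamma$. Kneser's theorem applies in any compact abelian group, so the resulting stabilizer is an open, hence finite index, subgroup of $b\Gamma$. Its preimage $K$ is finite index in $\Gamma$, and $\dens^{*}(A+B+K)=\dens^{*}(A+B)$ then follows as in the countable case. I would attempt this net-based replay first and keep the countable-reduction as backup.
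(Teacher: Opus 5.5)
Your proposal has genuine gaps: the countable reduction (Route~1) rests on a false inequality and leaves the lifting step unresolved, and the preferred Route~2 omits both steps where the paper's work lies.

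\textbf{Route~1.} In Step~1, the claim that upper Banach density only decreases on passing to a subgroup is false. Any $x\notin\Gamma_0$ moves a finite $F\subseteq\Gamma_0$ entirely off $\Gamma_0$, so $\Gamma_0$-F{\o}lner sets are never $\Gamma$-F{\o}lner. For example, with $\Gamma=\mathbb Z^{2}$ and $C=\Gamma_0=\mathbb Z\times\{0\}$ one has $\dens^{*}_{\Gamma}(C)=0<1=\dens^{*}_{\Gamma_0}(C)$. This point is repairable: $\dens^{*}(C)\le\max_{t}|C\cap(F+t)|/|F|$ for every finite $F$, with near equality for suitable $F$. Adjoining such $F$ for $C=A+B$ gives $\dens^{*}_{\Gamma_0}((A+B)\cap\Gamma_0)\le\dens^{*}(A+B)$.

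The real obstruction is Step~3, which you leave open. Theorem~\ref{th:CountableUBD} only asserts existence, so it may hand you a proper $K_0$; whenever $\dens^{*}_{\Gamma_0}(A_0+B_0)=1$, every finite index $K_0$ qualifies. For $\Gamma=\mathbb R$, which has no proper finite index subgroups, your $\Gamma_0$ need not be divisible. A proper $K_0\le\Gamma_0$ then has no extension, and no finite index $\Gamma'\le\Gamma$ satisfies $\Gamma'\cap\Gamma_0\subseteq K_0$. Even granting a lift $K$, $\dens^{*}_{\Gamma}(A+B+K)$ counts the cosets of $K$ that $A+B$ meets through elements of $A$, $B$ outside $\Gamma_0$. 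Nothing in Steps~1--2 bounds this by $\dens^{*}(A+B)$.

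\textbf{Route~2.} The architecture (compact model in $b\Gamma$, Kneser there, pull back the open stabilizer) matches the paper, but the sketch skips both places where the work lies.

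First, the model. The paper does not get it by running a correspondence principle along nets. It picks \emph{extreme} invariant means $\nu,\eta$ with $\nu(A)=\dens^{*}(A)$ and $\eta(B)=\dens^{*}(B)$, so that $\max\{\nu(A+B),\eta(A+B)\}<\nu(A)+\eta(B)$. It then invokes Theorem~\ref{th:Model}, a separate result for extreme means on arbitrary discrete abelian groups. This gives $\mu_{b\Gamma}(\tilde{A}_{\nu}+\tilde{B}_{\eta})\le\nu(A+B)$ and $V\cap\Gamma\subset_{\nu}A+B$ for compact $V\subseteq\tilde{A}_{\nu}+\tilde{B}_{\eta}$.

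Second, and decisively, ``$\dens^{*}(A+B+K)=\dens^{*}(A+B)$ then follows'' is precisely the nontrivial step. Kneser gives a finite index $\tilde H$ such that $\tilde C:=\tilde{A}_{\nu}+\tilde{B}_{\eta}$ is a union of $\tilde H$-cosets. But a priori $A+B$ can meet cosets of $K=\tilde H\cap\Gamma$ lying outside $\tilde C$, for instance in $\nu$-null sets. Then nothing bounds $\dens^{*}(A+B+K)$ by $\dens^{*}(A+B)$.

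The paper excludes this in Lemma~\ref{lem:HalfCritical}. Suppose $b_0\in B\setminus(\tilde{B}_{\eta}+\tilde H)$. Lemma~\ref{lem:Push} with Corollary~\ref{cor:MassInFiniteIndex} yields $a_0\in A$ with $\nu(A\cap(a_0+\tilde H))>0$ and $a_0+b_0+\tilde H$ disjoint from $\tilde C$. The Jin-type Lemma~\ref{lem:Jin} then gives $\nu(A\cap(a_0+\tilde H))+\mu_{b\Gamma}(\tilde C)\ge\nu(A)+\eta(B)$, forcing $\nu(A+B)\ge\nu(A)+\eta(B)$, a contradiction. Running this again with $\nu$ and $\eta$ interchanged (which is why both must be extreme) gives $A+B\subseteq\tilde C$. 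Hence $A+B+K\subseteq\tilde C\cap\Gamma\subset_{\nu}A+B$, and by Lemma~\ref{lem:FiniteIndexMean}
$\dens^{*}(A+B)\ge\nu(A+B)=\nu(A+B+K)=\dens^{*}(A+B+K)$.
Without this argument your outline does not reach the conclusion.
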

Theorem \ref{th:MainUBDstrict} will be proved at the end of \S\ref{sec:UBD}, as a consequence of Theorem \ref{th:TwoMeansStrict}.

\subsection{A potential extension}

The examples in Appendix C of \cite{BjorklundFish_ApproxInvariance} show that the conclusion of Theorem \ref{th:FolnerZ} cannot be recovered under the hypothesis $\dens_{\mb F}(A+B)< \dens^{*}(A)+\dens_{\mb F}(B)$, with the F{\o}lner sequence $F_{n}=\{-n,\dots,n\}$ in $\mathbb Z$.  To wit: there are nonempty sets $A$, $B\subseteq \mathbb Z$ satisfying $\dens_{\mb F}(A+B)<\dens^{*}(A)+\dens_{\mb F}(B)<1$, while $\dens_{\mb F}(A+B+k\mathbb Z)=1$ for every $k\in \mathbb N$.  In fact, this example satisfies $\dens^{*}((A+B)\cap (n+k\mathbb Z))<\dens^{*}(n+k\mathbb Z)$ for every $k, n\in \mathbb N$, so the approximate periodic structure guaranteed by Theorem \ref{th:FolnerZ} is absent.

\subsection{Compact groups}\label{sec:HaarMeasure}

The next theorem, also due to Kneser, is the analogue of Theorem \ref{th:KneserLAD} in the setting of locally compact abelian (LCA) groups.   If $K$ is an LCA group with Haar measure $\mu$, the corresponding \emph{inner Haar measure} $\mu_{*}$ is given by $\mu_{*}(C):=\sup\{\mu(E):E\subseteq C \text{ is compact}\}$.  

\begin{definition}\label{def:Stabilizer}
	Let $\Gamma$ be an abelian group and $C\subseteq \Gamma$. 	The \emph{stabilizer of} $C$ is the subgroup  $H(C):=\{\gamma \in\Gamma: C+\gamma=C\}$.
\end{definition}
Note that $H(C)$ satisfies $C+H(C)=C$.

\begin{theorem}[\cite{Kneser_SummenmengenLokalkompakten}, Satz 1]\label{th:SummenmengenSatz1} Let $G$ be an LCA group with Haar measure $\mu$.  If $A$, $B\subseteq G$ are nonempty $\mu$-measurable sets satisfying $\mu_*(A+B)<\mu(A)+\mu(B)$, then the stabilizer $K:=H(A+B)$ is compact, open, and satisfies
\begin{equation}\label{eq:KneserCritical}
	\mu(A+B)=\mu(A+K)+\mu(B+K)-\mu(K).
\end{equation}
\end{theorem}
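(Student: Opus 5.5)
This is Kneser's theorem for LCA groups (Theorem \ref{th:SummenmengenSatz1}). I would not reprove it from scratch; I would reduce it to Kneser's theorem for finite abelian groups, together with a compactness and approximation argument. The plan has three steps.

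\textbf{Step 1: reduce to compact sets and show $K$ is compact open.} Replace $A$ and $B$ by compact subsets $A'\subseteq A$ and $B'\subseteq B$ with $\mu(A')+\mu(B')>\mu_*(A+B)\geq\mu(A'+B')$. This is possible by inner regularity on $\sigma$-finite pieces; if $\mu(A)$ or $\mu(B)$ is infinite, the hypothesis forces $\mu_*(A+B)$ to be finite, and a large compact piece still works. So it suffices to treat compact $A$, $B$ with $\mu(A+B)<\mu(A)+\mu(B)$, and afterwards transfer the conclusion back to the original sets.

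For compact $A,B$ the key tool is the Kemperman--Kneser inequality in LCA groups (Kemperman's theorem for unimodular groups): if $A+B$ is not "periodic", then $\mu(A+B)\geq\min(\mu(A)+\mu(B),\mu(G))$. Since $\mu(A+B)<\mu(G)$ whenever $G$ is noncompact or $A+B\neq G$, a small sumset forces a nontrivial stabilizer. I would derive this from Kneser's finite theorem as follows:
\begin{itemize}
\item Pass to a compactly generated open subgroup containing $A$ and $B$. By the structure theorem it is $\mathbb R^a\times\mathbb Z^b\times K_0$ with $K_0$ compact.
\item Quotient by small compact subgroups of $K_0$, so that $K_0$ becomes a Lie group.
\item Discretize by intersecting with fine lattices $\frac1N\mathbb Z^a\times\mathbb Z^b\times(\text{finite subgroup})$, and take counting measure on large boxes.
\end{itemize}
Kneser's finite theorem then gives, for each discretization, a stabilizer $H_N$ with $|A_N+B_N|\geq|A_N+H_N|+|B_N+H_N|-|H_N|$. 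A strict deficit forces $H_N$ to be nontrivial with normalized measure bounded below. Passing to a limit of $H_N$ (Chabauty compactness of closed subgroups) gives a closed subgroup $H$ of positive measure, hence open. Since $A+B+H$ has finite measure, $H$ must have finite measure, so $H$ is compact. One must also check that $H$ stabilizes $A+B$, at least up to null sets.

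\textbf{Step 2: upgrade to the exact stabilizer and equation (\ref{eq:KneserCritical}).} Once some compact open $H$ with $A+B+H\approx A+B$ is found, pass to the discrete quotient $G/H$. There $A+H$ and $B+H$ are finite unions of cosets, and the counting version of Kneser's theorem applies exactly: with $K=H(A+B+H)\supseteq H$ compact open,
\[
|{\pi(A+B)}|=|\pi(A)+\pi(K)|+|\pi(B)+\pi(K)|-|\pi(K)|.
\]
Multiplying by $\mu(H)$ yields (\ref{eq:KneserCritical}) for $A+B+H$. It then remains to show $A+B+H=A+B$. This follows because $\mu(A+B)\geq\mu(A+B+H)$ in the critical case: each coset met by $A+B$ is filled, by the Cauchy--Davenport/Kneser argument applied within cosets, since $A\cap(a+H)$ and $B\cap(b+H)$ have measure summing to more than $\mu(H)$ in enough coset pairs. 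The pigeonhole argument "$\mu(X)+\mu(Y)>\mu(H)$ implies $X+Y\supseteq$ the whole coset" is what makes this work.

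\textbf{Main obstacle.} The hard step is Step 1's limiting argument. I need the discrete stabilizers $H_N$ to converge to an honest open subgroup, and the strict inequality must survive both the discretization and the inner-measure approximation. If the limiting argument proves too delicate, the fallback is Kneser's original route: an $e$-transform induction performed directly with Haar measure on compact sets. This would establish that the function $\mu(A+B)-\mu(A)-\mu(B)$ attains its critical configurations at periodic sets.
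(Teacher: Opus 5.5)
The paper does not prove this statement. It is quoted from Kneser \cite{Kneser_SummenmengenLokalkompakten} and used only as a black box, for $G=b\Gamma$ compact and for discrete quotients, so your outline has to stand on its own.

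\textbf{The gap.} There is a genuine gap at the step you treat as bookkeeping, ``afterwards transfer the conclusion back to the original sets.'' Your Step 2 claim that every coset met by $A+B$ is filled ``by pigeonhole in enough coset pairs'' does not close it.
\begin{enumerate}
\item Knowing the theorem for compact $A'\subseteq A$, $B'\subseteq B$ says nothing directly about $H(A+B)$. The set $A+B$ may contain $a+b$ with $a$ in a part of $A$ of measure zero, possibly in a coset that $A$ meets only in a null set. Pigeonhole gives no information there; excluding this case is an extremality statement, not a counting one. (For compact $A,B$ the issue is vacuous: $(A+B+H)\setminus(A+B)$ is open, so ``null'' already means empty.)
\item Nor can you let $A'\uparrow A$, $B'\uparrow B$ and expect the stabilizers to settle, because they are not monotone in the summands. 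In $\mathbb Z/12\mathbb Z$, $\{0,6\}+\{0,6\}$ has stabilizer $\{0,6\}$, while $\{0,1,6\}+\{0,1,6\}=\{0,1,2,6,7\}$ is still critical and has trivial stabilizer.
\end{enumerate}

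\textbf{A repair.} What is missing is a uniform lower bound on the stabilizers plus a freezing argument.
\begin{enumerate}
\item Fix compact $A_0\subseteq A$, $B_0\subseteq B$ with $\varepsilon:=\mu(A_0)+\mu(B_0)-\mu_*(A+B)>0$. For every compact pair with $A_0\subseteq A_1\subseteq A$, $B_0\subseteq B_1\subseteq B$, the compact case gives a compact open $K_1=H(A_1+B_1)$ with $\mu(K_1)=\mu(A_1+K_1)+\mu(B_1+K_1)-\mu(A_1+B_1)\geq\varepsilon$.
\item Let $s\leq\mu_*(A+B)$ be the supremum of $\mu(A_1+B_1)$ over such pairs, and fix one whose sumset $C_1$ satisfies $\mu(C_1)>s-\varepsilon$.
\item Take any larger compact pair, with sumset $C_2$ and stabilizer $K_2$. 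The set $(C_1+K_2)\setminus C_1\subseteq C_2\setminus C_1$ is a union of $K_1$-cosets of measure $<\varepsilon\leq\mu(K_1)$, hence empty, so $K_2\leq K_1$. Then $C_2\setminus C_1$ is a union of $K_2$-cosets of measure $<\varepsilon\leq\mu(K_2)$, so $C_2=C_1$.
\item Adjoining single points $a\in A$, $b\in B$ shows $A+B=C_1$.
\item The pairs $(A_1\cup\{a\},B_1)$ and $(A_1,B_1)$ have the same sumset, hence the same stabilizer $K_1$. Comparing their Kneser equations gives $a\in A_1+K_1$.
\end{enumerate}
Hence $A+K_1=A_1+K_1$, likewise $B+K_1=B_1+K_1$, and the theorem holds for $A,B$ with $K=K_1$.

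\textbf{Repairs needed in the compact case.} The discretization route is viable, but:
\begin{enumerate}
\item Intersecting $A$ with a lattice can return the empty set, since a compact set of positive measure may miss every lattice point. Use thickenings $(A+U_N)\cap D_N$ with mesh finer than $U_N$.
\item With thickenings, every $a+b$ is a limit of points $a_N+b_N$. So a Vietoris (Chabauty) limit of the discrete stabilizers stabilizes $A+B+L$ exactly, not just up to null sets.
\item Upper semicontinuity of Haar measure along convergent compact sets gives this limit measure at least $\mu(A)+\mu(B)-\mu(A+B+L)>0$, so it is open.
\item A second limit, as the compact subgroup $L$ shrinks to $\{0\}$, then yields a compact open subgroup stabilizing $A+B$.
\item The case $a>0$ drops out automatically, because the discretized groups then have finite subgroups of vanishing normalized size.
\end{enumerate}
Also, the ``Kemperman--Kneser inequality'' you call the key tool is, outside connected groups, essentially the statement being proved. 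All the weight rests on the discretization.

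\textbf{Infinite measure.} Your claim that ``a large compact piece still works'' when $\mu(A)=\infty$ needs inner regularity. That fails for non-$\sigma$-finite sets under the outer-regular Haar measure. In $\mathbb R\times\mathbb R_{d}$ (with $\mathbb R_{d}$ the reals with the discrete topology), $A=B=\{0\}\times\mathbb R_{d}$ has $\mu(A)=\infty$ and $\mu_*(A+B)=0$, yet $H(A+B)=A$ is not open. So the statement tacitly assumes the inner-regular convention (or $\sigma$-finiteness). In the paper's applications $G$ is compact or discrete, so this never arises.
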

As in \cite{Jin_CharacterizingStructure}, \cite{Griesmer_InverseTheorem}, \cite{Griesmer_SmallSumPairs}, and \cite{Griesmer_Semicontinuity}, Equation (\ref{eq:KneserCritical}) is crucial for our main argument.

Note that $K$ being open implies $G/K$ is a discrete abelian group.  Now Theorem \ref{th:SummenmengenSatz1} and the identity $A+B=A+B+K$ reduce the problem of classifying sets $A$, $B\subseteq G$ satisfying $\mu_{*}(A+B)<\mu(A)+\mu(B)$ to the corresponding problem for discrete abelian groups.  In discrete groups Haar measure may be taken to be counting measure, so the latter inequality becomes $|A+B|<|A|+|B|$.  

To emphasize our priorities, we restate the special case of Theorem \ref{th:SummenmengenSatz1} where $G$ is compact.
\begin{theorem}\label{th:Satz1Compact}
Let $G$ be a compact abelian group with Haar probability measure $\mu$.  If $A$, $B\subseteq G$ are nonempty $\mu$-measurable sets satisfying $\mu_{*}(A+B)<\mu(A)+\mu(B)$, then there is a compact open (hence finite index) subgroup $K\leq G$ satisfying the following:
\begin{enumerate}
	\item\label{item:Satz1FiniteIndex} $A+B=A+B+K$;
	\item\label{item:Satz1Stabilizer} $K=H(A+B)$;
	\item\label{item:Satz1Equation} $\mu(A+B)=\mu(A+K)+\mu(B+K)-\mu(K)$.
\end{enumerate}
\end{theorem}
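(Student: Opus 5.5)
Theorem \ref{th:Satz1Compact} is the special case of Theorem \ref{th:SummenmengenSatz1} in which $G$ is compact, so the plan is to read off each conclusion from that result and add the one extra observation that compactness supplies.

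First, invoke Theorem \ref{th:SummenmengenSatz1} with $\mu$ the Haar probability measure on $G$. Every nonempty $\mu$-measurable $A$, $B$ with $\mu_*(A+B)<\mu(A)+\mu(B)$ satisfies its hypotheses. Set $K:=H(A+B)$. Theorem \ref{th:SummenmengenSatz1} then tells us that $K$ is compact and open and that equation (\ref{eq:KneserCritical}) holds. This gives conclusion \ref{item:Satz1Stabilizer} by definition and conclusion \ref{item:Satz1Equation} directly.

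Second, conclusion \ref{item:Satz1FiniteIndex} follows from the remark after Definition \ref{def:Stabilizer}: $C+H(C)=C$ for any $C$, applied here to $C=A+B$.

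Third, we must check that $K$ has finite index. Since $K$ is open, its cosets $g+K$ form an open cover of $G$ by pairwise disjoint sets. Compactness of $G$ gives a finite subcover, and disjointness means every coset must appear in it. Hence $[G:K]<\infty$. Alternatively, $\mu(K)>0$ because nonempty open sets have positive Haar measure, the cosets are disjoint with equal measure, and $\mu(G)=1$, so $[G:K]\le 1/\mu(K)$.

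The only point needing care is that all the measures in (\ref{eq:KneserCritical}) are defined. This holds because $A+K$ and $B+K$ are unions of cosets of the open subgroup $K$, hence open and measurable. Likewise $A+B=A+B+K$ is open, so it is measurable and $\mu_*(A+B)=\mu(A+B)$. Apart from this, everything is a direct citation of Theorem \ref{th:SummenmengenSatz1}, and I expect no substantive obstacle.
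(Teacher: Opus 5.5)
Your proposal is correct and matches the paper, which states this result as the compact special case of Kneser's Satz 1 (Theorem \ref{th:SummenmengenSatz1}) with $K=H(A+B)$; conclusion (i) comes from $C+H(C)=C$, and finite index comes from $K$ being open in a compact group. Your added checks, the coset-cover argument for finite index and the measurability of $A+K$, $B+K$, and $A+B$, are valid and fill in the routine details the paper leaves implicit.
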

Our third main result is Theorem \ref{th:MainOneMeanStrict}, which we state in \S\ref{sec:Means}.  It generalizes  Theorem \ref{th:Satz1Compact} by replacing $G$ with a discrete abelian group $\Gamma$ and $\mu$ with a finitely additive, translation-invariant probability measure defined all subsets of $\Gamma$.  Such measures are usually discussed in terms of the associated linear functional on $\ell^{\infty}(\Gamma)$, called an \emph{invariant mean}.  Thus Theorem \ref{th:MainOneMeanStrict} will apply to groups such as $\mathbb R$, $\mathbb T^{d}\times \mathbb Z^{r}$, etc., with no measurability hypothesis on the summands $A$, $B$.  It will also apply to ultraproducts\footnote{The setting we have in mind here is one where $(G_{n})_{n\in \mathbb N}$ is a sequence of abelian groups, $m_{n}$ is an invariant mean on $\ell^{\infty}(G_{n})$, $\mathcal U$ is a nonprincipal ultrafilter on $\mathbb N$, $\mb G=\prod_{n\to\mathcal U} G_{n}$ and we define a mean $m$ on $\ell^{\infty}(G)$ by first defining $m$ on uniformly closed subalgebra $\mathcal A\subseteq \ell^{\infty}(G)$ generated by the standard parts of internal functions $\mb f:=\prod_{n\to \mathcal U}f_{n}$, $f_{n}:G_{n}\to \mathbb C$, $m(\operatorname{st}(\mb f)):=\lim_{n\to \mathcal U} m_{n}(f_{n})$, then extending via Hahn-Banach.} of abelian groups, providing a key step in extending the results of \cite{Griesmer_Semicontinuity} beyond ultraproducts of compact abelian groups.

\subsection{Outline}

In \S\ref{sec:Means} and \S\ref{sec:FolnerDef} we introduce terminology and state our main results.  Section \ref{sec:Models} summarizes the main result of \cite{Griesmer_DiscreteSumsets}, which models sumsets in discrete groups by sumsets in compact groups, allowing us to exploit Theorem \ref{th:SummenmengenSatz1}.  Section \ref{sec:UBDproof} provides some technical lemmas and proves Theorem \ref{th:TwoMeansStrict}.

Section \ref{sec:Choquet} recalls the Choquet-Bishop-de Leeuw decomposition theorem for compact convex subsets of locally convex spaces.  Section \ref{sec:Hilbert} contains technical lemmas regarding such decompositions, Hilbert spaces associated to invariant means, and F{\o}lner nets in discrete abelian groups.  Section \ref{sec:MainProofs} proves Theorem \ref{th:MainOneMeanStrict} and Proposition \ref{prop:FullFolnerStrict}.  Section \ref{sec:LevelProof} proves a technical lemma stated in \S\ref{sec:Hilbert}.

\section{Means}\label{sec:Means}

Let $\Gamma$ be a discrete abelian group. If $A$ and $B$ are subsets of $\Gamma$, their \emph{sumset} is $A+B:=\{a+b:a\in A, b\in B\}$. 

We write $\ell^{\infty}(\Gamma)$ for the Banach space of bounded functions $f:\Gamma\to \mathbb C$, equipped with the supremum norm: $\|f\|_{\infty}:=\sup_{x\in \Gamma}|f(x)|$.  We write $\ell^{\infty}(\Gamma)^{*}$ for the (Banach space) dual of $\ell^{\infty}(\Gamma)$, equipped with the weak$^{*}$ topology.  For $A\subseteq \Gamma$, we write $1_{A}$ for the indicator function: $1_{A}(\gamma)=1$ if $\gamma\in A$, $1_{A}(\gamma)=0$ otherwise.  Thus $1_{\Gamma}$ is the constant function with value $1$.

A \emph{mean} on $\ell^\infty(\Gamma)$ is a linear functional $m:\ell^\infty(\Gamma)\to \mathbb C$ satisfying $m(1_\Gamma)=1$ and $m(f)\geq 0$ for all $f:\Gamma\to[0,1]$. We write $\mathcal M(\Gamma)$ for the set of means on $\ell^{\infty}(\Gamma)$. It is easy to verify that $\mathcal M(\Gamma)$ is a weak$^{*}$-compact convex subset of $\ell^{\infty}(\Gamma)^{*}$.

Associated to each mean $m$ is a finitely additive probability measure $\tilde{m}$ on $\mathcal P(\Gamma)$, given by $\tilde{m}(A):=m(1_A)$.  We will abuse notation and write $m(A)$ in place of $\tilde{m}(A)$.

We write $A\subset_m B$ if $m(A\setminus B)=0$, and we write $A\sim_{m} B$ if $m(A\triangle B)=0$.

If $f$ is a function on $\Gamma$ and $\gamma\in \Gamma$, the \emph{translate} $\tau_{\gamma}f$ is given by $(\tau_{\gamma}f)(x):=f(x-\gamma)$.
An \emph{invariant mean on $\ell^\infty(\Gamma)$} is a mean $m$ satisfying $m(\tau_{\gamma}f)=m(f)$ for all $f\in \ell^\infty(\Gamma)$ and all $\gamma\in \Gamma$.
Every discrete abelian group admits at least one invariant mean; this is often seen as a consequence of applying the Markov-Kakutani fixed-point theorem to the action $\tau^{*}$ on  $\mathcal M(\Gamma)$  given by translation: $(\tau_{\gamma}^{*}m)(f):=m(\tau_{\gamma}f)$.  For details see \cite{Paterson_Amenability}, specifically Proposition 0.15 of Chapter 0 therein.

The set of invariant means on $\ell^\infty(\Gamma)$ will be denoted $\mathcal M_{\tau}(\Gamma)$.  It is a nonempty weak$^*$-compact convex subset of $\ell^\infty(\Gamma)^*$. The Krein-Milman theorem therefore implies that its set of extreme points, denoted  $\mathcal M_{\tau}^{ext}(\Gamma)$, is nonempty.  An element of $\mathcal M_{\tau}^{ext}(\Gamma)$ is called an \emph{extreme invariant mean}.

\begin{definition}\label{def:UBD}
	If $\Gamma$ is a discrete abelian group and $A\subseteq \Gamma$, the \emph{upper Banach density} of $A$ is $\dens^{*}(A):=\sup\{m(A):m\in \mathcal M_{\tau}(\Gamma)\}$.
\end{definition}
Lemma \ref{lem:UBD} shows that Definition \ref{def:UBD} agrees with (\ref{eq:Zubd}).  The next lemma says that upper Banach density is realized by an extreme invariant mean.  See \cite[Observation 1.3]{Griesmer_DiscreteSumsets} for a proof.

\begin{lemma}
Let $\Gamma$ be a discrete abelian group and $A\subseteq \Gamma$.  There is a $\nu \in \mathcal M_{\tau}^{ext}(\Gamma)$ such that $\nu(A)=\dens^{*}(A)$. 
\end{lemma}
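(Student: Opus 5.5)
The plan is to obtain $\nu$ as an extreme point of a face of $\mathcal M_{\tau}(\Gamma)$, using the Krein--Milman theorem.

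First, the supremum in Definition \ref{def:UBD} is attained. The map $m\mapsto m(A)=m(1_{A})$ is weak$^{*}$-continuous, since evaluation at the fixed vector $1_{A}\in\ell^{\infty}(\Gamma)$ is continuous in the weak$^{*}$ topology. Also, $\mathcal M_{\tau}(\Gamma)$ is nonempty and weak$^{*}$-compact. Therefore the set
\[
\mathcal F:=\{m\in \mathcal M_{\tau}(\Gamma): m(A)=\dens^{*}(A)\}
\]
is nonempty. It is weak$^{*}$-closed in a compact set, hence compact, and it is clearly convex.

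Next, $\mathcal F$ is a face of $\mathcal M_{\tau}(\Gamma)$. Suppose $m=t m_{1}+(1-t)m_{2}$ with $m_{1},m_{2}\in\mathcal M_{\tau}(\Gamma)$, $0<t<1$, and $m\in\mathcal F$. Then $\dens^{*}(A)=t\,m_{1}(A)+(1-t)m_{2}(A)$. Both $m_{1}(A)$ and $m_{2}(A)$ are at most $\dens^{*}(A)$, so both must equal it, and hence $m_{1},m_{2}\in\mathcal F$.

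By Krein--Milman, applied in the locally convex space $\ell^{\infty}(\Gamma)^{*}$ with the weak$^{*}$ topology, the nonempty compact convex set $\mathcal F$ has an extreme point $\nu$. Because $\mathcal F$ is a face, $\nu$ is also extreme in $\mathcal M_{\tau}(\Gamma)$. Indeed, if $\nu=t m_{1}+(1-t)m_{2}$ with $m_{i}\in\mathcal M_{\tau}(\Gamma)$, the face property puts $m_{1},m_{2}$ in $\mathcal F$, and extremality of $\nu$ in $\mathcal F$ then forces $m_{1}=m_{2}=\nu$. So $\nu\in\mathcal M_{\tau}^{ext}(\Gamma)$ and $\nu(A)=\dens^{*}(A)$.

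There is no real obstacle here. The only points needing care are the weak$^{*}$-continuity of $m\mapsto m(A)$, which gives attainment of the supremum and compactness of $\mathcal F$, and the face argument that lets extremality pass from $\mathcal F$ up to $\mathcal M_{\tau}(\Gamma)$.
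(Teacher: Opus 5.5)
Your proof is correct. It is the face-plus-Krein--Milman argument behind the Bauer maximum principle, which is essentially the paper's route: the paper cites \cite[Observation 1.3]{Griesmer_DiscreteSumsets} for this lemma, and in the proofs of Corollaries \ref{cor:Q} and \ref{cor:Unexpected} it uses the same fact, that a weak$^{*}$-continuous linear functional attains its supremum over $\mathcal M_{\tau}(\Gamma)$ at an extreme invariant mean.
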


\begin{lemma}\label{lem:FiniteIndexMean}
	Let $\Gamma$ be a discrete abelian group and $K\leq \Gamma$ a finite index subgroup with index $k$.  If $C\subseteq \Gamma$ is a union of cosets of $K$, then for every $m\in \mathcal M_{\tau}(\Gamma)$ 
	\begin{equation}\label{eq:FIdensity}
		m(C)=\dens^{*}(C)=|\tilde{C}|/k,
	\end{equation}
	where $\tilde{C}$ is the image of $C$ in the quotient $\Gamma/K$.
\end{lemma}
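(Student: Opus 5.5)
The idea is to write $1_{C}$ as a finite sum of indicator functions of cosets of $K$, and then use translation invariance together with finite additivity.

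Fix coset representatives $\gamma_{1},\dots,\gamma_{k}$ of $K$ in $\Gamma$. The cosets $\gamma_{j}+K$ are pairwise disjoint and cover $\Gamma$, so
\[1_{\Gamma}=\sum_{j=1}^{k}1_{\gamma_{j}+K}.\]
We have $1_{\gamma_{j}+K}=\tau_{\gamma_{j}}1_{K}$, so invariance of $m$ gives $m(\gamma_{j}+K)=m(K)$ for each $j$. Applying linearity of $m$ to the display above yields $1=m(1_{\Gamma})=k\,m(K)$, and therefore every coset of $K$ has $m$-measure $1/k$.

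Since $C$ is a union of cosets of $K$, it is the disjoint union of the cosets $\gamma_{j}+K$ with $\gamma_{j}+K\in\tilde{C}$. In particular $|\tilde{C}|\leq k$ is finite, so $1_{C}$ is a finite sum of coset indicators. Linearity then gives $m(C)=|\tilde{C}|/k$.

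This value does not depend on $m\in\mathcal M_{\tau}(\Gamma)$. The set $\mathcal M_{\tau}(\Gamma)$ is nonempty, as noted above, so taking the supremum in Definition \ref{def:UBD} gives $\dens^{*}(C)=|\tilde{C}|/k$ as well. That proves (\ref{eq:FIdensity}).

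There is no substantive obstacle here. The only point needing care is that finite additivity suffices, and it does because the index $k$ is finite, so only finitely many cosets are involved.
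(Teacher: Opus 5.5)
Your proposal is correct and follows essentially the same route as the paper's proof: invariance and finite additivity over the $k$ disjoint translates of $K$ give $m(K)=1/k$, hence $m(C)=|\tilde{C}|/k$ for every invariant mean. Since this value is independent of $m$, $\dens^{*}(C)=|\tilde{C}|/k$ as well. Your explicit remark that $\mathcal M_{\tau}(\Gamma)$ is nonempty, which is needed for the supremum to equal this common value, is a small point the paper leaves implicit.
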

\begin{proof}
	When $m$ is an invariant mean and $K\leq \Gamma$ is a subgroup with index $k<\infty$, we have
	\begin{equation}\label{eq:mFinIndex}
		m(K)=1/k,
	\end{equation}
	as $\Gamma$ is the disjoint union of $k$ translates of $K$.  For $C\subseteq \Gamma$, we therefore have $m(C+K)=N_{C}/k$, where $N_{C}$ is the number of cosets of $K$ having nonempty intersection with $K$.  This shows that $m(C)=|\tilde{C}|/k$ when $C$ is a union of cosets of $K$.  Since this holds for every invariant mean we have $\dens^{*}(C)=|\tilde{C}|/k$, as well.
\end{proof}

The main result of this section is Theorem \ref{th:MainOneMeanStrict}, regarding the inequality $m(A+B)<m(A)+m(B)$ for an arbitrary $m\in \mathcal M_{\tau}(\Gamma)$.  We first state a version with a stronger conclusion, under the additional hypothesis that $m$ is extreme.
\begin{theorem}\label{th:OneExtreme}
	Let $\Gamma$ be a discrete abelian group and $A$, $B\subseteq \Gamma$ nonempty.  There is a unique finite index subgroup $K\leq \Gamma$ (depending only on $A+B$) satisfying all of the following: if $\nu\in \mathcal M_{\tau}^{ext}(\Gamma)$ and  $\nu(A+B)<\nu(A)+\nu(B)$, then $\nu(A+B)=\nu(A+B+K)$, 
	\begin{equation}\label{eq:OneExtremeKneser}
		\nu(A+B+K)=\nu(A+K)+\nu(B+K)-\nu(K),
	\end{equation}
and $K$ is the stabilizer of $A+B+K$.  
\end{theorem}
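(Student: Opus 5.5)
The plan has three stages. First, for each extreme $\nu$ with $\nu(A+B)<\nu(A)+\nu(B)$, produce \emph{some} finite index subgroup $K_\nu$ with $\nu(A+B)=\nu(A+B+K_\nu)$. Second, upgrade $K_\nu$ to a subgroup that is the stabilizer of $A+B+K_\nu$ and satisfies (\ref{eq:OneExtremeKneser}), using Kneser's theorem in a finite quotient. Third, show that the resulting subgroup does not depend on $\nu$. Throughout, write $C:=A+B$. If no extreme $\nu$ satisfies the strict inequality, the assertion is vacuous, and we may take $K=\Gamma$, which is trivially the stabilizer of $C+\Gamma=\Gamma$.

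\textbf{Stage one.} I would invoke the compact model of \S\ref{sec:Models} (the main result of \cite{Griesmer_DiscreteSumsets}). I expect it to provide, for an extreme invariant mean $\nu$, the following data:
\begin{itemize}
\item[(a)] a compact abelian group $G$ with Haar probability measure $\mu$;
\item[(b)] a homomorphism $\rho:\Gamma\to G$ with dense image;
\item[(c)] measurable sets $\tilde A,\tilde B\subseteq G$ with $\mu(\tilde A)\geq\nu(A)$ and $\mu(\tilde B)\geq\nu(B)$;
\item[(d)] the condition that $\rho^{-1}(\tilde A+\tilde B)\subset_\nu C$, together with $\mu_*(\tilde A+\tilde B)\leq\nu(C)$.
\end{itemize}
Then $\mu_*(\tilde A+\tilde B)<\mu(\tilde A)+\mu(\tilde B)$, so Theorem \ref{th:Satz1Compact} yields a compact open subgroup $H=H(\tilde A+\tilde B)$ of finite index. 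Put $K_\nu:=\rho^{-1}(H)$; it is a finite index subgroup of $\Gamma$, because $\rho$ has dense image. Since $\tilde A+\tilde B$ is $H$-invariant, $\rho^{-1}(\tilde A+\tilde B)$ is a union of $K_\nu$-cosets. Its $\nu$-measure should be $\mu(\tilde A+\tilde B)$; this uses that pullbacks of clopen $H$-invariant sets have $\nu$-measure equal to their Haar measure, which follows from Lemma \ref{lem:FiniteIndexMean}.

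I then want $C+K_\nu\subset_\nu C$, i.e.\ $\nu(C)\ge\nu(C+K_\nu)$. This is the step I expect to be the main obstacle: it requires exactly the right form of the model theorem, in particular that $C$ is $\nu$-almost contained in, and almost contains, the pullback of an $H$-invariant set. If the model only gives one-sided information, I would instead use the Kneser equation (\ref{eq:KneserCritical}) in $G$. Combined with $\mu(\tilde A+H)\ge\nu(A+K_\nu)$ and the analogous bound for $B$, it gives
\[
\nu(C)\geq \nu(A+K_\nu)+\nu(B+K_\nu)-\nu(K_\nu).
\]
The finite-group Kneser inequality in $\Gamma/K_\nu$ gives the reverse bound for $\nu(C+K_\nu)$, namely $\nu(C+K_\nu)\leq\nu(A+K_\nu)+\nu(B+K_\nu)-\nu(K_\nu)$ in the critical case. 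Together these squeeze to $\nu(C)=\nu(C+K_\nu)$.

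\textbf{Stage two.} Given any finite index $L$ with $\nu(C)=\nu(C+L)$, pass to the finite group $\Gamma/L$. By Lemma \ref{lem:FiniteIndexMean}, $\nu$ of a union of $L$-cosets is the normalized count of the cosets. Hence
\[
|(C+L)/L|\,/[\Gamma:L]=\nu(C)<\nu(A)+\nu(B)\le \nu(A+L)+\nu(B+L).
\]
Kneser's theorem for finite abelian groups (the discrete case of Theorem \ref{th:SummenmengenSatz1}) then shows that the stabilizer $K'$ of $C+L$, which contains $L$, satisfies (\ref{eq:OneExtremeKneser}). Moreover $C+K'=C+L$, so $\nu(C)=\nu(C+K')$ and $K'$ is the stabilizer of $C+K'$. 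Apply this with $L=K_\nu$ and rename the result $K_\nu$.

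\textbf{Stage three.} Let $\nu,\nu'$ both satisfy the strict inequality, and let $L=K_\nu\cap K_{\nu'}$, which has finite index. From $C\subseteq C+L\subseteq C+K_\nu$ and $\nu(C)=\nu(C+K_\nu)$ we get $\nu(C+L)=\nu(C+K_\nu)$. Both sets are unions of $L$-cosets, so by Lemma \ref{lem:FiniteIndexMean} they have the same number of $L$-cosets, and nested such sets must coincide: $C+L=C+K_\nu$. Similarly $C+L=C+K_{\nu'}$. Taking stabilizers gives $K_\nu=H(C+L)=K_{\nu'}$, so $K$ depends only on $C$. Uniqueness follows by the same argument: any finite index $K''$ with the stated properties for some such $\nu$ satisfies $C+K''=C+(K''\cap K)=C+K$, and therefore $K''=H(C+K)=K$.
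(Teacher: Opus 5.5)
Your Stages two and three are fine; they are essentially Lemma \ref{lem:FItoKneser}, Lemma \ref{lem:TwoSubgroups}, and the paper's uniqueness argument. Stage one has a genuine gap, exactly at the step you flagged, and your fallback does not close it.

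The inequality $\mu(\tilde A+H)\geq \nu(A+K_\nu)$ is not available, because the model $\tilde A=\tilde A_\nu$ only sees the $\nu$-essential part of $A$. By Theorem \ref{th:Model} \ref{item:VbGammaClopen} and Lemma \ref{lem:Push}(a), $\tilde A$ is disjoint from every coset $\gamma+H$ with $\nu(A\cap(\gamma+K_\nu))=0$, even when $A\cap(\gamma+K_\nu)\neq\varnothing$. What you actually get (Corollary \ref{cor:MassInFiniteIndex}) is $\mu(\tilde A+H)=\nu(A'+K_\nu)\leq\nu(A+K_\nu)$, where $A'$ discards the $\nu$-null coset pieces. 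So the inequality runs the wrong way. Those null pieces are precisely what could make $C+K_\nu$ larger than $(\tilde A+\tilde B)\cap\Gamma$: a point $a_0\in A\setminus(\tilde A+H)$ contributes $a_0+B\subseteq C$, possibly in new cosets.

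The second ingredient is circular. Kneser's theorem in $\Gamma/K_\nu$ only bounds $|\bar A+\bar B|$ from below. The bound $\nu(C+K_\nu)\leq \nu(A+K_\nu)+\nu(B+K_\nu)-\nu(K_\nu)$ is just integrality of the critical hypothesis $\nu(C+K_\nu)<\nu(A+K_\nu)+\nu(B+K_\nu)$. You only know $\nu(C)<\nu(A)+\nu(B)$, not the corresponding inequality for $C+K_\nu$.

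The missing idea is Jin's argument (Lemmas \ref{lem:Push}(b), \ref{lem:Jin}, \ref{lem:HalfCritical}). Under the hypothesis one must show $A\subseteq \tilde A_\nu+\tilde H$ and $B\subseteq\tilde B_\nu+\tilde H$, i.e.\ $A$ and $B$ have no nonempty $\nu$-null pieces outside the model cosets. The argument for $B$ runs as follows.
\begin{enumerate}
\item Suppose $b_0\in B\setminus(\tilde B+\tilde H)$.
\item Lemma \ref{lem:Push}(b), which is Kneser's equation in $b\Gamma$ applied to $\tilde B\cup\{b_0\}$, produces $a_0$ with $\tilde A\cap(a_0+\tilde H)\neq\varnothing$ and $a_0+b_0+\tilde H$ disjoint from $\tilde A+\tilde B$.
\item Lemma \ref{lem:Jin} gives $\nu(A\cap(a_0+\tilde H))\geq \nu(A)+\nu(B)-\mu_{b\Gamma}(\tilde A+\tilde B)$.
\item The set $(A\cap(a_0+\tilde H))+b_0$ lies in $C$ but outside $(\tilde A+\tilde B)\cap\Gamma$. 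Since $(\tilde A+\tilde B)\cap\Gamma\subset_\nu C$, this yields $\nu(C)\geq\nu(A)+\nu(B)$, a contradiction.
\end{enumerate}
The containment for $A$ follows by the symmetric argument. With both containments you get $C+K_\nu\subseteq(\tilde A+\tilde B)\cap\Gamma\subset_\nu C$ directly, and no squeeze is needed. This is what Proposition \ref{prop:TwoMeansTechnical} does with $\eta=\nu$.
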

 We call the subgroup $K$ the \emph{KJ-stabilizer} of $A+B$.  

Theorem \ref{th:OneExtreme} is a special case of Theorem \ref{th:TwoMeansStrict} below.

\begin{corollary}\label{cor:Q}
	Let $\Gamma$, $A$, $B$, and $K$ be as in Theorem \ref{th:OneExtreme}.  Then for every invariant mean $m$ on $\ell^{\infty}(\Gamma)$, we have
	\begin{equation}\label{eq:mGap}
		m(A)+m(B)-m(A+B)\leq \min\{m(A), m(B),[\Gamma:K]^{-1}\}
	\end{equation}
\end{corollary}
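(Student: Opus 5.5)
The plan is to prove the three bounds in (\ref{eq:mGap}) separately. The bounds by $m(A)$ and $m(B)$ follow from invariance alone. The bound $[\Gamma:K]^{-1}$ will first be proved for extreme invariant means using Theorem \ref{th:OneExtreme}, and then passed to arbitrary invariant means by a Krein--Milman argument.

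\emph{The bounds $m(A)$ and $m(B)$.} Since $A$ is nonempty, fix $a\in A$. Then $a+B\subseteq A+B$, so monotonicity and translation invariance of $m$ give $m(A+B)\geq m(a+B)=m(B)$. Hence $m(A)+m(B)-m(A+B)\leq m(A)$. Symmetrically, using some $b\in B$, the gap is at most $m(B)$.

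\emph{The bound $[\Gamma:K]^{-1}$ for extreme means.} Let $k=[\Gamma:K]$ and $\nu\in\mathcal M_{\tau}^{ext}(\Gamma)$. If $\nu(A+B)\geq \nu(A)+\nu(B)$, the gap is $\leq 0\leq 1/k$. Otherwise Theorem \ref{th:OneExtreme} together with (\ref{eq:mFinIndex}) gives
\[\nu(A+B)=\nu(A+B+K)=\nu(A+K)+\nu(B+K)-\tfrac1k\geq \nu(A)+\nu(B)-\tfrac1k,\]
since $A\subseteq A+K$ and $B\subseteq B+K$. So $\nu(A)+\nu(B)-\nu(A+B)\leq 1/k$ for every extreme $\nu$.

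\emph{Passing to arbitrary invariant means.} Define $g:\mathcal M_{\tau}(\Gamma)\to\mathbb R$ by $g(m)=m(1_A)+m(1_B)-m(1_{A+B})$. This map is the restriction of evaluation at a fixed element of $\ell^{\infty}(\Gamma)$, so it is affine and weak$^*$ continuous. Therefore $S:=\{m\in\mathcal M_{\tau}(\Gamma): g(m)\leq 1/k\}$ is a weak$^*$-closed convex subset of $\mathcal M_{\tau}(\Gamma)$. By the previous step, $S$ contains $\mathcal M_{\tau}^{ext}(\Gamma)$. Since $\mathcal M_{\tau}(\Gamma)$ is weak$^*$-compact and convex, the Krein--Milman theorem says it is the closed convex hull of its extreme points, hence $S=\mathcal M_{\tau}(\Gamma)$.

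There is no real obstacle. The only point needing care is this extension from extreme to arbitrary means. An integral (Choquet) decomposition would also work, but it raises measurability issues in the non-metrizable case, so the plan uses the closed-convex-set formulation instead.
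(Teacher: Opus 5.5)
Your proof is correct and takes essentially the same approach as the paper. The paper gets the $\min\{m(A),m(B)\}$ bound from $m(A+B)\geq\max\{m(A),m(B)\}$, and the $[\Gamma:K]^{-1}$ bound by applying Theorem \ref{th:OneExtreme} at an extreme invariant mean where $m\mapsto m(1_A+1_B-1_{A+B})$ attains its supremum. You instead use the closed-convex-hull form of Krein--Milman, which is an equivalent route, and you treat the nonpositive-gap case explicitly where the paper leaves it implicit.
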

\begin{proof}  The bound $m(A)+m(B)-m(A+B)\leq \min\{m(A),m(B)\}$ follows immediately from the inequality $m(A+B)\geq \max\{m(A),m(B)\}$. 
	
	To prove $m(A)+m(B)-m(A+B)\leq [\Gamma:K]^{-1}$, let $g:=1_{A}+1_{B}-1_{A+B}$, so that $m(g)$ is the left-hand side of (\ref{eq:mGap}). 	Since $m\mapsto m(g)$ is a continuous linear functional on $\ell^{\infty}(\Gamma)^{*}$ and $\mathcal M_{\tau}(\Gamma)$ is a compact convex set, $\sup\{\nu(g):\nu\in \mathcal M_{\tau}(\Gamma)\}$ is attained by some $\nu\in \mathcal M_{\tau}^{ext}(\Gamma)$. Fix such a $\nu$ where the supremum is attained. To get $\nu(A)+\nu(B)-\nu(A+B)\leq \nu(K)$, note that $\nu(A)+\nu(B)-\nu(A+B)\leq \nu(A+K)+\nu(B+K)-\nu(A+B)$, and the latter expression simplifies to $\nu(K)$ by (\ref{eq:OneExtremeKneser}).  We have $\nu(K)=[\Gamma:K]^{-1}$ by Lemma \ref{lem:FiniteIndexMean}.
\end{proof}

We cannot obtain the conclusion of Theorem \ref{th:OneExtreme} under the weaker hypothesis $\nu\in \mathcal M_{\tau}(\Gamma)$:  applying Lemma \ref{lem:FolnerToMean} to the example shown in Remark \ref{rem:Example} produces an invariant mean $m$ on $\ell^{\infty}(\mathbb Z)$ and $A,B\subseteq \mathbb Z$ satisfying $m(A+B)=m(A)=m(B)=1/2$, while $m(A+B+K)=1$
for every finite index subgroup $K\leq \mathbb Z$.

In analogy with Theorem \ref{th:FolnerZ}, Theorem \ref{th:MainOneMeanStrict} recovers the desired conclusion by restricting to a subset of $\Gamma$ which is large and translation-invariant, with respect to $m$. A natural way to do this is to fix $C\subseteq \Gamma$ satisfying $m(C\triangle (C+\gamma))=0$ for all $\gamma\in \Gamma$, and  define a mean $m'$ by $m'(f):= m(C)^{-1}(f1_{C})$.  Since $m$ is not assumed to be countably additive, we require more flexibility.  See \S\ref{sec:Completeness} for discussion. 
\begin{definition}\label{def:StrongAbsoluteContinuity}
	Let $\Gamma$ be a discrete abelian group and let $m$ and $m'$ be (not necessarily invariant)  means on $\ell^{\infty}(\Gamma)$. For a given $\beta>0$, we say that $m'$ is \emph{a $\beta$-restriction of $m$}  if there is a sequence of sets $C_{n}\subseteq \Gamma$ satisfying 
	\begin{align}
		\label{eq:mCbeta}&\lim_{n\to\infty} m(C_{n})\geq \beta,\\
 		\label{eq:CnCauchy}&\lim_{N\to\infty} \sup_{n,n'\geq N} m(C_{n}\triangle C_{n'})=0,\\ 
 		\label{eq:Definem'}&m'(f)=\lim_{n\to\infty} m(C_n)^{-1}m(f1_{C_{n}})&& \text{for all } f\in \ell^{\infty}(\Gamma).
	\end{align}
\end{definition}
Note that if $m$ is invariant and (\ref{eq:mCbeta})-(\ref{eq:Definem'}) hold, then invariance of $m'$ is equivalent to
\begin{equation}
	\lim_{n\to\infty} m(C_{n}\triangle (C_{n}+\gamma))=0  \quad \text{for all } \gamma \in \Gamma.
\end{equation}

\begin{theorem}\label{th:MainOneMeanStrict}
	Let $\Gamma$ be a discrete abelian group with $A$, $B\subseteq \Gamma$ nonempty, and let $K\leq \Gamma$ be the KJ-stabilizer of $A+B$ given by Theorem \ref{th:OneExtreme}, so that $k:=[\Gamma:K]$ is finite and $K$ is the stabilizer of $A+B+K$.
	
	If $m\in \mathcal M_{\tau}(\Gamma)$ satisfies $\delta:=m(A)+m(B)-m(A+B)>0$, then there is an  $m'\in \mathcal M_{\tau}(\Gamma)$ such that
	\begin{enumerate}
		\item $m'$ is a $k\delta$-restriction of $m$;
		\item  $m'(A+B)=m'(A+B+K)$;
		\item $m'(A)+m'(B)-m'(A+B)\geq \delta.$
	\end{enumerate}
\end{theorem}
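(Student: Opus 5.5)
We will decompose $m$ into extreme invariant means, apply Theorem \ref{th:OneExtreme} to the components where the sumset inequality is strict, and then realize the conditioning on those components as a $k\delta$-restriction of $m$. The outline mentions the Choquet--Bishop--de Leeuw theorem, so we represent $m$ as the barycenter of a probability measure $\mu$ on $\mathcal M_\tau(\Gamma)$ that is supported (in the Bishop--de Leeuw sense) on $\mathcal M_{\tau}^{ext}(\Gamma)$. Then
\[
m(f)=\int \nu(f)\,d\mu(\nu)\quad\text{for } f\in \ell^\infty(\Gamma).
\]

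Set $g:=1_A+1_B-1_{A+B}$ and $E:=\{\nu:\nu(g)>0\}$. This is a Borel (indeed open) set in the weak$^*$ topology. By Theorem \ref{th:OneExtreme}, every extreme $\nu\in E$ satisfies $\nu(A+B)=\nu(A+B+K)$ and, as in the proof of Corollary \ref{cor:Q}, $\nu(g)\le 1/k$. Since $m(g)=\int \nu(g)\,d\mu\le \mu(E)/k$, we get $\mu(E)\ge k\delta$.

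Now define $m'(f):=\mu(E)^{-1}\int_E \nu(f)\,d\mu$. This $m'$ is an invariant mean. It satisfies $m'(A+B+K)-m'(A+B)=0$, because the integrand vanishes on $E$. It also satisfies
\[
m'(g)=\mu(E)^{-1}\int_E \nu(g)\,d\mu \ge \mu(E)^{-1}\int \nu(g)\,d\mu\ge \delta,
\]
since $\nu(g)\le 0$ off $E$ and $\mu(E)\le 1$. This gives conclusions (ii) and (iii).

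The main obstacle is conclusion (i): showing that $m'$ is a $\beta$-restriction of $m$ in the sense of Definition \ref{def:StrongAbsoluteContinuity}, with $\beta=\mu(E)\ge k\delta$. We need sets $C_n\subseteq\Gamma$ with $m(f1_{C_n})\approx \int_E \nu(f)\,d\mu$, i.e.\ $1_{C_n}$ must approximate, inside the mean, the function $\nu\mapsto 1_E(\nu)$ on the space of components. The plan is to work in the Hilbert space $L^2(m)$, the completion of $\ell^\infty(\Gamma)$ under $\langle f,h\rangle=m(f\bar h)$, which carries a unitary translation action.

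First we identify $L^\infty(\mu)$ restricted to the invariant part with the algebra of translation-invariant elements in $L^2(m)$; this is an ergodic-decomposition correspondence. Under it, $1_E$ corresponds to an invariant projection $P$, or an invariant element $\phi$ with $0\le\phi\le1$. Next we approximate $\phi$ in $L^2(m)$ by bounded functions $h_n\in\ell^\infty(\Gamma)$, and then replace these by indicator functions $1_{C_n}$. The natural choice is superlevel sets $C_n=\{h_n>t\}$ for a suitable generic threshold $t$. Because $\phi$ is idempotent, the level sets of a good approximant approximate it well, which is what the ``technical lemma'' proved in \S\ref{sec:LevelProof} presumably handles.

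From the $L^2$ convergence we then obtain the three conditions of Definition \ref{def:StrongAbsoluteContinuity}:
\begin{itemize}
\item the Cauchy condition \eqref{eq:CnCauchy};
\item $m(C_n)\to\mu(E)$, which gives \eqref{eq:mCbeta};
\item $m(f1_{C_n})\to \int_E\nu(f)\,d\mu$, which gives \eqref{eq:Definem'}.
\end{itemize}
The last limit follows by a density argument, using $|m(f(1_{C_n}-\phi))|\le\|f\|_\infty\|1_{C_n}-\phi\|_{L^2(m)}$ together with the identity $m(f\phi)=\int_E\nu(f)\,d\mu$ coming from the correspondence. Verifying this final identity for the Choquet representing measure, which is only a Baire measure and not a Borel one on a non-metrizable space, is the delicate point we expect to consume most of the effort.
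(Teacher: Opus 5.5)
The gap is in conclusion (i), at the step ``because $\phi$ is idempotent.'' Your verification of (ii), (iii) and of $\mu(E)\ge k\delta$ for the conditioned mean $\mu(E)^{-1}\int_E\nu\,d\mu$ is fine. But for that mean to be a $\beta$-restriction, the invariant element $\phi\in L^2(m)$ with $m(f\phi)=\int_E\nu(f)\,d\mu$ must be (a scalar multiple of) an indicator element, i.e.\ $\phi^2=\phi$.

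You obtain this only from an asserted multiplicative ``ergodic-decomposition correspondence'' between $L^\infty(\mu)$ and invariant elements of $L^2(m)$, and you give no argument for it. Here $m$ is only finitely additive, $\mathcal M_\tau(\Gamma)$ is non-metrizable, and $\Gamma$ may be uncountable, so no such correspondence is available off the shelf. This is exactly the paper's Conjecture~\ref{conj:Indicator}, which the author says he cannot prove (Remark~\ref{rem:Sad}).

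Conversely, the identity you flag as the delicate point is routine. The functional $f\mapsto\int_E\nu(f)\,d\mu$ is dominated by $m(|f|)$, so Riesz representation in $L^2(m)$ gives an invariant $\phi$ with $0\le\phi\le 1_\Gamma$ (Lemmas~\ref{lem:AC} and~\ref{lem:BoundRepresentative}). Without idempotence, your threshold sets converge to the level-set indicators $1_{\{\phi>t\}}$ of Lemma~\ref{lem:LevelSetInL2}. Their normalized means are not $\mu(E)^{-1}\int_E\nu\,d\mu$, so your verification of (ii) and (iii) does not transfer to them.

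The paper avoids this by not requiring $m'$ to be the conditioned mean. From $w=\phi$ it keeps only two facts:
\begin{itemize}
\item $m(gw)\ge\delta$;
\item $m(1_{A+B}w)=\dens^*(A+B)\,m(w)$, since every extreme $\nu\in E$ has $\nu(A+B)=\dens^*(A+B+K)=\dens^*(A+B)$.
\end{itemize}
Realizing $\dens^*(A+B)$ is a face condition inherited by convex components (Lemma~\ref{lem:dStarConvex}). Hence every invariant level-set indicator of $w$ also realizes it (Lemma~\ref{lem:ExtremityOfdStar}). Lemma~\ref{lem:StepApproximation} then assembles a union $v$ of such level sets with $m(gv)\ge\delta$, using $w\le 1_\Gamma$.

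With $m'=m(v)^{-1}m(\cdot\,v)$, conclusion (ii) follows from $\dens^*(A+B+K)=m'(A+B)\le m'(A+B+K)\le\dens^*(A+B+K)$. The bound $m(v)\ge k\delta$ comes from applying Corollary~\ref{cor:Q} to $m'$ itself, giving $1/k\ge m'(g)\ge\delta/m(v)$, not from $\mu(E)\ge k\delta$.

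If you want to keep your route, you must actually prove idempotence. One might try to use that $\mathcal M_\tau(\Gamma)$ is a Choquet simplex. With a \emph{maximal} representing measure, uniqueness of maximal measures should make the barycenters of $\mu|_E$ and $\mu|_{E^c}$ disjoint in the lattice of invariant functionals. Nothing of that kind appears in your sketch.
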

Theorem \ref{th:MainOneMeanStrict} is proved in \S\ref{sec:MainProofs}.  The proof combines Theorem \ref{th:OneExtreme} with Choquet decomposition applied to $\mathcal M_{\tau}(\Gamma)$.

\subsection{Obtaining the stabilizer equation}\label{sec:RecoverKEQ}

\begin{lemma}\label{lem:FItoKneser}
	Let $\Gamma$ be a discrete abelian group, $m\in \mathcal M_{\tau}(\Gamma)$, $K_{0}\leq \Gamma$ a finite index subgroup, and let $A$, $B\subseteq \Gamma$ be nonempty sets satisfying $m(A+B)<m(A)+m(B)$ and \begin{equation}\label{eq:StableK0}
		m(A+B)=m(A+B+K_{0}).
	\end{equation}  Let $K:=H(A+B+K_{0})$ be the stabilizer of $A+B+K_{0}$. Then $K_{0}\leq K\leq \Gamma$, $A+B+K_{0}=A+B+K$, and \begin{equation}\label{eq:KneserStable} m(A+B)=m(A+B+K)=m(A+K)+m(B+K)-m(K).
\end{equation}
\end{lemma}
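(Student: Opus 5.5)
The plan is to reduce everything to Kneser's theorem for finite abelian groups, applied in the quotient $\Gamma/K_{0}$. First the easy structural facts. Since $A+B+K_{0}$ is a union of $K_{0}$-cosets, every element of $K_{0}$ stabilizes it, so $K_{0}\leq K$. Then $A+B+K_{0}\subseteq A+B+K_{0}+K = A+B+K_{0}$, since $K$ stabilizes $A+B+K_{0}$. This gives $A+B+K_{0}=A+B+K$, and together with (\ref{eq:StableK0}) also $m(A+B)=m(A+B+K)$. Because $K\geq K_{0}$, $K$ has finite index, and Lemma \ref{lem:FiniteIndexMean} lets us compute $m$ on unions of $K_{0}$-cosets as normalized counts in the finite group $G:=\Gamma/K_{0}$.

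Next pass to the quotient. Let $\pi:\Gamma\to G$ be the projection and put $\tilde A:=\pi(A)$, $\tilde B:=\pi(B)$, with $n:=|G|=[\Gamma:K_{0}]$. Then $\pi(A+B)=\tilde A+\tilde B$. By Lemma \ref{lem:FiniteIndexMean},
\[
m(A+B+K_{0})=|\tilde A+\tilde B|/n,\qquad m(A+K_{0})=|\tilde A|/n,\qquad m(B+K_{0})=|\tilde B|/n.
\]
Combining these with the hypothesis gives
\[
|\tilde A+\tilde B|/n=m(A+B)<m(A)+m(B)\leq m(A+K_{0})+m(B+K_{0})=(|\tilde A|+|\tilde B|)/n.
\]
So $|\tilde A+\tilde B|<|\tilde A|+|\tilde B|$ in the finite group $G$. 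Kneser's theorem for finite abelian groups then applies; it is the discrete case of Theorem \ref{th:SummenmengenSatz1}, with counting measure, and every subgroup is compact and open. Writing $\tilde H:=H(\tilde A+\tilde B)$, it gives
\[
|\tilde A+\tilde B|=|\tilde A+\tilde H|+|\tilde B+\tilde H|-|\tilde H|.
\]

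Finally, lift back to $\Gamma$. The stabilizer of $\tilde A+\tilde B$ in $G$ is exactly $\pi(K)$, and $\pi^{-1}(\tilde H)=K$: an element $\gamma\in\Gamma$ stabilizes the $K_{0}$-invariant set $A+B+K_{0}$ if and only if $\pi(\gamma)$ stabilizes its image. Hence $\pi^{-1}(\tilde A+\tilde H)=A+K$ and $\pi^{-1}(\tilde B+\tilde H)=B+K$. Dividing the displayed identity by $n$ and applying Lemma \ref{lem:FiniteIndexMean} to these unions of $K_{0}$-cosets yields
\[
m(A+B+K)=m(A+K)+m(B+K)-m(K).
\]
With $m(A+B)=m(A+B+K)$ this is (\ref{eq:KneserStable}).

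The main point needing care is the identification of the stabilizer under the quotient map, $\pi^{-1}(H(\tilde A+\tilde B))=H(A+B+K_{0})$. This holds because $A+B+K_{0}=\pi^{-1}(\tilde A+\tilde B)$. Beyond that, the only delicate step is checking that the strict inequality transfers to the finite group, which uses $m(A)\leq m(A+K_{0})$ and $m(B)\leq m(B+K_{0})$.
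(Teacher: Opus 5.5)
Your proof is correct and follows essentially the same route as the paper's. You pass to the finite quotient $\Gamma/K_0$, transfer the strict inequality using Lemma \ref{lem:FiniteIndexMean} and $m(A)\leq m(A+K_0)$, apply Kneser's theorem there, and pull the stabilizer back via $K=\pi^{-1}(H(\tilde A+\tilde B))$. Your explicit checks that $A+B+K_0=A+B+K$ and that the stabilizer corresponds under $\pi$ supply details the paper asserts without argument.
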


\begin{proof}
Assuming (\ref{eq:StableK0}), let  $\phi:\Gamma\to \Gamma/K_{0}$ be the quotient map, and let $k_{0}$ be the index of $K_{0}$ in $\Gamma$.  For each $D\subseteq \Gamma$, let $\tilde{D}=\phi(D)$. We have $m(D+K_{0})=|\tilde{D}|/k_{0}$, by Lemma \ref{lem:FiniteIndexMean}. We see that \begin{align*}|\tilde{A}+\tilde{B}|=k_{0}m(A+B+K_{0})&=k_{0}m(A+B)\\
	&< k_{0}(m(A)+m(B))\\
	&\leq  k_{0}m(A+K_{0})+k_{0}m(B+K_{0})\\
	&=|\tilde{A}|+|\tilde{B}|.
\end{align*} Thus $|\tilde{A}+\tilde{B}|<|\tilde{A}|+|\tilde{B}|$. Let $H:=\{g\in \Gamma/K_{0}: \tilde{A}+\tilde{B}+g=\tilde{A}+\tilde{B}\}$.  Theorem \ref{th:SummenmengenSatz1} says that $H$ satisfies
\begin{equation}\label{eq:HKneser}
	|\tilde{A}+\tilde{B}|=|\tilde{A}+\tilde{B}+H|=|\tilde{A}+H|+|\tilde{B}+H|-|H|
\end{equation}  Let $K=\phi^{-1}(H)$, so that $K$ is a finite index subgroup of $\Gamma$ containing $K_{0}=\phi^{-1}(0_{\Gamma/K})$, with $m(K)=|H|(1/k_{0})$. Since $H$ is the stabilizer of $\tilde{A}+\tilde{B}$, we get that $K$ is the stabilizer of $A+B+K_{0}$. Equation (\ref{eq:KneserStable}) follows, since
\begin{align*} m(A+B)=m(A+B+K_{0})&=m(A+B+K)\\
	&=(1/k_{0})|\tilde{A}+\tilde{B}+H|\\
	&=(1/k_{0})(|\tilde{A}+H|+|\tilde{B}+H|-|H|)\\
	&=m(A+K)+m(B+K)-m(K). \qedhere
\end{align*}
\end{proof}

\subsection{Upper Banach density}\label{sec:UBD}
Theorem \ref{th:MainUBDstrict} will be derived from Theorem \ref{th:TwoMeansStrict}, an analogous statement with extreme invariant means in place of upper Banach density.  We write $C\sim_{\nu,\eta}D$ to mean $C\sim_{\nu} D$ and $C\sim_{\eta}D$.

\begin{theorem}\label{th:TwoMeansStrict}
	Let $\Gamma$ be a discrete abelian group and $A$, $B\subseteq \Gamma$ nonempty.  There is a unique finite index subgroup $K\leq \Gamma$ (depending only on $A+B$) satisfying all of the following: if $\nu$, $\eta\in \mathcal M_{\tau}^{ext}(\Gamma)$ satisfy
	\begin{equation}\label{eq:TwoExtremeHyp}
		\max\{\nu(A+B),\eta(A+B)\} <\nu(A)+\eta(B),
	\end{equation}
	we have $A+B\sim_{\nu,\eta} A+B+K$. Furthermore,  $K$ is the stabilizer of $A+B+K$, and
	\begin{equation}\label{eq:TwoKneser}
		\nu(A+B+K)=\nu(A+K)+\nu(B+K)-\nu(K).
	\end{equation}
\end{theorem}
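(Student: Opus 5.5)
The plan is to transfer the problem to a compact group via the modeling result of \cite{Griesmer_DiscreteSumsets} summarized in \S\ref{sec:Models}, and then apply Kneser's Theorem \ref{th:SummenmengenSatz1} there. Concretely, given extreme invariant means $\nu,\eta$ satisfying (\ref{eq:TwoExtremeHyp}), I would invoke the modeling theorem to obtain a compact abelian group $G$ with Haar probability $\mu$, a homomorphism $\rho:\Gamma\to G$ with dense image, and measurable sets $\tilde A,\tilde B\subseteq G$. These should satisfy $\mu(\tilde A)\ge \nu(A)$ and $\mu(\tilde B)\ge \eta(B)$. They should also satisfy $\mu_*(\tilde A+\tilde B)\le \max\{\nu(A+B),\eta(A+B)\}$, or more precisely the statement that $A+B$ is, up to $\nu$- and $\eta$-null sets, contained in $\rho^{-1}(\tilde A+\tilde B)$. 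Here I am assuming the modeling theorem handles a pair of extreme means simultaneously. If it handles only one mean, I would apply it to $\tfrac12(\nu+\eta)$ and use ergodicity (extremality) of each to keep the density comparisons. Then (\ref{eq:TwoExtremeHyp}) gives $\mu_*(\tilde A+\tilde B)<\mu(\tilde A)+\mu(\tilde B)$, and Theorem \ref{th:Satz1Compact} yields a finite index open subgroup $\tilde K=H(\tilde A+\tilde B)$ satisfying Kneser's equation in $G$. Pulling back, $K_0:=\rho^{-1}(\tilde K)$ is finite index in $\Gamma$ because $\rho(\Gamma)$ is dense and $\tilde K$ is open. The containment $A+B\subset_{\nu,\eta}\rho^{-1}(\tilde A+\tilde B)$ together with the density inequalities then forces $\nu(A+B)=\nu(A+B+K_0)$ and likewise for $\eta$, since $A+B+K_0\subseteq \rho^{-1}(\tilde A+\tilde B)+K_0$ up to null sets, and the latter has $\nu$-measure $\mu(\tilde A+\tilde B)$ by unique ergodicity of finite quotients (Lemma \ref{lem:FiniteIndexMean}).

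Once some finite index $K_0$ with $A+B\sim_{\nu,\eta}A+B+K_0$ is found, I would apply Lemma \ref{lem:FItoKneser} with $m=\nu$. This requires $\nu(A+B)<\nu(A)+\nu(B)$, which does not follow directly from (\ref{eq:TwoExtremeHyp}). Instead I would run the argument of that lemma in the finite quotient $\Gamma/K_0$, using $|\tilde A+\tilde B|<|\tilde A|+|\tilde B|$ there, which follows from $k_0\nu(A+B+K_0)<k_0(\nu(A+K_0)+\eta(B+K_0))$. The Kneser theorem for the finite group then gives $K=H(A+B+K_0)\ge K_0$ with $A+B+K=A+B+K_0$ and (\ref{eq:TwoKneser}). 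Since $\nu(D+K)$ depends only on the coset image, (\ref{eq:TwoKneser}) holds for every invariant mean.

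The key issue is that $K$ must be independent of $\nu,\eta$. For this I would define $K$ intrinsically from $A+B$. Among all finite index subgroups $L$, consider the sets $A+B+L$, and take $K$ to be the stabilizer of $A+B+K_{\nu,\eta}$. To show that different pairs give the same subgroup, take two admissible pairs $(\nu,\eta)$ and $(\nu',\eta')$ with subgroups $K_1,K_2$, and compare with $K_1\cap K_2$, which is still finite index. By Lemma \ref{lem:FiniteIndexMean}, $\nu(A+B+K_1\cap K_2)$ lies between $\nu(A+B)$ and $\nu(A+B+K_1)$, so it equals both. Thus $K_1\cap K_2$ also works for $(\nu,\eta)$, and the Kneser step applied to it recovers the stabilizer $H(A+B+K_1\cap K_2)$. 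The remaining task is to show that this stabilizer equals $K_1$, and I expect this to be the main obstacle. Passing to $\Gamma/(K_1\cap K_2)$, a finite group, the sets $S=\widetilde{A+B}+\widetilde{K_1\cap K_2}$ and $S+\tilde K_1$ have equal cardinality, so $S$ is $K_1$-periodic. By symmetry $S$ is also $K_2$-periodic. Hence both subgroups lie in $H(A+B+K_1\cap K_2)$, and since $A+B+K_1\cap K_2=A+B+K_1+K_2$, the stabilizers of $A+B+K_1$, of $A+B+K_2$, and of this common set all coincide. This gives uniqueness, with $K$ equal to that common stabilizer. Uniqueness as stated follows similarly: any $K'$ with the listed properties is the stabilizer of $A+B+K'=A+B+K$.
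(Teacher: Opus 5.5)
There is a genuine gap in your first step, and that step carries essentially all of the work. Theorem \ref{th:Model} does not give $A+B\subset_{\nu,\eta}\rho^{-1}(\tilde A+\tilde B)$. Its property \ref{item:VcapGamma} gives the reverse containment, $(\tilde A_{\nu}+\tilde B_{\eta})\cap\Gamma\subset_{\nu}A+B$, once Theorem \ref{th:Satz1Compact} makes the model sumset clopen. That is the same direction as the inequality $\mu_{b\Gamma}(\tilde A_{\nu}+\tilde B_{\eta})\le\nu(A+B)$ that you also cite. So the two statements you present as ``more precisely'' the same actually point in opposite directions.

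Even containment up to null sets would not suffice. Suppose a $\nu$-null set $N\subseteq A+B$ meets a coset of $K_0$ outside $\rho^{-1}(\tilde A+\tilde B)$. Then $\nu(N+K_0)\ge[\Gamma:K_0]^{-1}$, so ``$A+B+K_0\subseteq\rho^{-1}(\tilde A+\tilde B)+K_0$ up to null sets'' does not follow. This is not a technicality. For infinite $\Gamma$, any model sets for $(A,B)$ also satisfy every conclusion of Theorem \ref{th:Model} for $(A\cup\{p\},B)$, while $A+B+K_0$ may gain an entire coset. So the models alone cannot yield $\nu(A+B)=\nu(A+B+K_0)$. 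You need the \emph{exact} containments $A\subseteq\tilde A_{\nu}+\tilde H$ and $B\subseteq\tilde B_{\eta}+\tilde H$, where $\tilde H=H(\tilde A_{\nu}+\tilde B_{\eta})$. These must use the strict inequality in $\Gamma$, not merely its transfer to $b\Gamma$.

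The paper obtains these containments in Lemma \ref{lem:HalfCritical}. Suppose $b_0\in B\setminus(\tilde B_{\eta}+\tilde H)$.
\begin{itemize}
\item Lemma \ref{lem:Push}(b), a consequence of Kneser's theorem in $b\Gamma$, yields a coset $a_0+\tilde H$ meeting $\tilde A_{\nu}$ such that $a_0+b_0+\tilde H$ is disjoint from $\tilde A_{\nu}+\tilde B_{\eta}$.
\item Lemma \ref{lem:Push}(a), with property \ref{item:VbGammaClopen} of the model, gives $\nu(A\cap(a_0+\tilde H))>0$, so $a_0$ may be taken in $A$.
\item Jin's Lemma \ref{lem:Jin}, which needs $\nu$ extreme, gives $\nu(A\cap(a_0+\tilde H))+\mu_{b\Gamma}(\tilde A_{\nu}+\tilde B_{\eta})\ge\nu(A)+\eta(B)$.
\end{itemize}
Now $(A\cap(a_0+\tilde H))+b_0\subseteq A+B$ is disjoint from $(\tilde A_{\nu}+\tilde B_{\eta})\cap\Gamma\subset_{\nu}A+B$. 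This forces $\nu(A+B)\ge\nu(A)+\eta(B)$, a contradiction. Exchanging the roles of $(\nu,A)$ and $(\eta,B)$ handles $A$. Hence $A+B\subseteq C:=(\tilde A_{\nu}+\tilde B_{\eta})\cap\Gamma\subset_{\nu,\eta}A+B$, and only then does $A+B\sim_{\nu,\eta}A+B+H$ follow. For the same reason your fallback of modeling $\tfrac12(\nu+\eta)$ would fail: the argument needs an extreme mean attached to each summand in turn.

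Your second half is sound. The finite-quotient Kneser step is what Proposition \ref{prop:TwoMeansTechnical} does via Lemma \ref{lem:FItoKneser}. Your comparison of $A+B+(K_1\cap K_2)$ with $A+B+K_1$ and $A+B+K_2$ is Lemma \ref{lem:TwoSubgroups}, with individual means in place of $\dens^{*}$.
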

We will deduce Theorem \ref{th:TwoMeansStrict} from Proposition \ref{prop:TwoMeansTechnical}.  We now prove Theorem \ref{th:MainUBDstrict}.

\begin{proof}[Proof of Theorem \ref{th:MainUBDstrict}]
	Assume $\Gamma$ is a discrete abelian group, and $A$, $B\subseteq \Gamma$ are nonempty sets satisfying $\dens^{*}(A+B)<\dens^{*}(A)+\dens^{*}(B)$. Let $K$ be the finite index subgroup given by Theorem \ref{th:TwoMeansStrict}. We will prove that $\dens^{*}(A+B)=\dens^{*}(A+B+K)$. Fix  $\nu$, $\eta\in \mathcal M_{\tau}^{ext}(\Gamma)$ with $\nu(A)=\dens^{*}(A)$ and $\eta(B)=\dens^{*}(B)$.  Then $\max\{\nu(A+B),\eta(A+B)\}\leq \dens^{*}(A+B)$ by definition of $\dens^{*}$, so (\ref{eq:TwoExtremeHyp}) holds.  Theorem \ref{th:TwoMeansStrict} says that $A+B\sim_{\nu} A+B+K$.  Thus $\nu(A+B)=\nu(A+B+K)$, and Lemma \ref{lem:FiniteIndexMean} implies $\nu(A+B+K)=\dens^{*}(A+B+K)$.  Now 
	\[\dens^{*}(A+B)\geq \nu(A+B)= \dens^{*}(A+B+K)\geq \dens^{*}(A+B),\] and we conclude that $\dens^{*}(A+B)=\dens^{*}(A+B+K)$.
\end{proof}

\subsection{Relation between \texorpdfstring{$m$}{dPhi} and \texorpdfstring{$\dens^{*}$}{d*}}

For many sets $C\subseteq \Gamma$ and invariant means $m$, we have $m(C)< \dens^{*}(C)$, so one might not expect the next corollary.
\begin{corollary}\label{cor:Unexpected}
	If $m$ is an invariant mean on $\ell^{\infty}(\Gamma)$ and $A$, $B\subseteq \Gamma$ satisfy $m(A+B)<m(A)+m(B)$, then $\dens^{*}(A+B)<\dens^{*}(A)+\dens^{*}(B)$. 
\end{corollary}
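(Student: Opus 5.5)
The plan is to go through Theorem \ref{th:MainOneMeanStrict} and Lemma \ref{lem:FItoKneser}, using the fact that invariant means agree on unions of cosets of a finite index subgroup (Lemma \ref{lem:FiniteIndexMean}).

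Suppose $m\in\mathcal M_{\tau}(\Gamma)$ satisfies $\delta:=m(A)+m(B)-m(A+B)>0$. Both $A$ and $B$ are nonempty: if one were empty, then $A+B=\varnothing$, and the hypothesis $0<m(A)+m(B)$ would force the other to be empty too, since $m(\varnothing)=0$. So we may let $K$ be the KJ-stabilizer of $A+B$ from Theorem \ref{th:OneExtreme}. Theorem \ref{th:MainOneMeanStrict} then gives $m'\in\mathcal M_{\tau}(\Gamma)$ with
\[m'(A+B)=m'(A+B+K)\quad\text{and}\quad m'(A)+m'(B)-m'(A+B)\geq\delta>0.\]
Apply Lemma \ref{lem:FItoKneser} to $m'$ with $K_0:=K$. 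This is legitimate because its hypotheses $m'(A+B)<m'(A)+m'(B)$ and (\ref{eq:StableK0}) hold. In fact $K=H(A+B+K)$ already, so the lemma returns $K$ itself, and
\[m'(A+B)=m'(A+K)+m'(B+K)-m'(K).\]

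Each of $A+B+K$, $A+K$, $B+K$, $K$ is a union of cosets of $K$. By Lemma \ref{lem:FiniteIndexMean}, their $m'$-values equal their upper Banach densities. Hence
\[\dens^{*}(A+B)\leq\dens^{*}(A+B+K)=m'(A+B+K)=\dens^{*}(A+K)+\dens^{*}(B+K)-\dens^{*}(K).\]
It remains to show that $\dens^{*}(A+K)+\dens^{*}(B+K)-\dens^{*}(K)<\dens^{*}(A)+\dens^{*}(B)$.

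This is the crux. The naive bound $\dens^{*}(A)\geq m'(A)$ does not give it, because only the gap $\delta$ is controlled, not $1/k$ (where $k=[\Gamma:K]$). I would work directly from the equation for $m'$. Write $m'(A)=m'(A+K)-\alpha$ and $m'(B)=m'(B+K)-\beta$ with $\alpha,\beta\geq0$. The $m'$-equation together with $m'(A)+m'(B)-m'(A+B)\geq\delta>0$ gives $\alpha+\beta<m'(K)=1/k$.

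I would then upgrade this to upper Banach density coset by coset. Write $A+K$ as a union of cosets $x_i+K$. Since $m'$ is invariant, $m'(A\cap(x_i+K))\leq 1/k$ for each $i$, so the total deficit $\alpha$ is the sum of the per-coset deficits $1/k-m'(A\cap(x_i+K))$. By invariance of $m'$, we can translate all the pieces into a single coset and compare. Concretely, the set $A':=\bigcup_i\bigl((A\cap(x_i+K))-x_i+x_1\bigr)$ should be used to get a single invariant mean that sees $A$ with density at least $m'(A)$. So $\dens^{*}(A)\geq m'(A)$ directly, and similarly $\dens^{*}(B)\geq m'(B)$.

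Combining the last two displays, the target inequality reduces to $\dens^{*}(A+B)<m'(A)+m'(B)$, and more precisely to
\[\dens^{*}(A+B+K)-m'(A+B)<\delta.\]
This holds because the left side equals $m'(A+B+K)-m'(A+B)=0$. Chaining everything:
\[\dens^{*}(A+B)\leq\dens^{*}(A+B+K)=m'(A+B)\leq m'(A)+m'(B)-\delta<m'(A)+m'(B)\leq\dens^{*}(A)+\dens^{*}(B).\]

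So the argument is short once $m'$ is in hand. The coset-by-coset step turns out to be unnecessary, since $\dens^{*}(A)\geq m'(A)$ holds simply because $m'$ is an invariant mean. The real work is hidden in Theorem \ref{th:MainOneMeanStrict}, and the main check is the identity $\dens^{*}(A+B+K)=m'(A+B+K)$ for finite-index-periodic sets, which is Lemma \ref{lem:FiniteIndexMean}.
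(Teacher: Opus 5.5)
Your final chain is a valid proof, but it takes a different route from the paper's. One preliminary claim in it is also wrong.

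\textbf{Comparison of routes.} The paper never invokes Theorem \ref{th:MainOneMeanStrict}. It sets $g=1_{A}+1_{B}-1_{A+B}$ and notes that $\nu\mapsto\nu(g)$ is a weak$^{*}$-continuous linear functional on the compact convex set $\mathcal M_{\tau}(\Gamma)$. Its maximum is therefore attained at an extreme point, so $m(g)>0$ yields $\nu\in\mathcal M_{\tau}^{ext}(\Gamma)$ with $\nu(A+B)<\nu(A)+\nu(B)$. Theorem \ref{th:OneExtreme} and Lemma \ref{lem:FiniteIndexMean} then give $\dens^{*}(A+B)\le\dens^{*}(A+B+K)=\nu(A+B+K)=\nu(A+B)<\nu(A)+\nu(B)\le\dens^{*}(A)+\dens^{*}(B)$.

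You replace the extreme point $\nu$ by the restricted mean $m'$ of Theorem \ref{th:MainOneMeanStrict}. That mean has exactly the two properties you need: $m'(A+B)=m'(A+B+K)$ and a positive gap. This is not circular, since the proof in \S\ref{sec:MainProofs} does not use the corollary. However, it sends a two-line fact through the Choquet decomposition and the $L^{2}(m)$ level-set machinery. That proof itself already relies on the identity $\nu(A+B)=\dens^{*}(A+B)$ for extreme $\nu$ with positive gap, which is all the paper's argument needs. Your appeal to Lemma \ref{lem:FItoKneser} and the coset-by-coset paragraph contribute nothing; only your last display is needed.

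\textbf{The nonemptiness claim is false.} If $A=\varnothing$, then $A+B=\varnothing$ and the hypothesis reads $0<m(B)$. That holds for any $B$ of positive $m$-measure; it forces $B$ to be nonempty, not empty. Theorem \ref{th:MainOneMeanStrict}, like Theorem \ref{th:OneExtreme}, requires nonempty summands, so you must treat this case separately. It is trivial: $\dens^{*}(A+B)=0<m(B)\le\dens^{*}(B)=\dens^{*}(A)+\dens^{*}(B)$. The paper's proof also tacitly assumes nonemptiness.
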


\begin{proof}
	Let $g=1_{A}+1_{B}-1_{A+B}$.  Note that $m(A+B)<m(A)+m(B) \iff m(g)>0$.  Since the map $m\mapsto m(g)$ is a continuous linear functional on $\ell^{\infty}(\Gamma)^{*}$, the supremum $\sup\{m(g):m\in \mathcal M_{\tau}(\Gamma)\}$ is attained at an extreme point $\nu$ of $\mathcal M_{\tau}(\Gamma)$.  Thus if $m(g)>0$ for some $m\in \mathcal M_{\tau}(\Gamma)$, then $\nu(g)>0$ for some $\nu\in \mathcal M_{\tau}^{ext}(\Gamma)$, meaning $\nu(A+B)<\nu(A)+\nu(B)$.  Now Theorem \ref{th:OneExtreme} implies $\dens^{*}(A+B)=\nu(A+B)<\nu(A)+\nu(B)\leq \dens^{*}(A)+\dens^{*}(B)$.
\end{proof}

\section{Extending the density theorems of Kneser, Bihani, and Jin}\label{sec:FolnerDef}

After defining F{\o}lner sequences and F{\o}lner nets for discrete abelian groups, we state Theorem \ref{th:MainFolnerStrict} below.  It generalizes Theorem \ref{th:FolnerZ} to discrete abelian groups.

\subsection{F{\o}lner nets} 

A \emph{directed set} $(I,\preceq)$ is a set $I$ together with a transitive relation $\preceq$ such that for which any two elements $i$, $j\in I$, there exists $\ell\in I$ such that $i\preceq \ell$ and $j\preceq \ell$.    A \emph{net} $(y_{i})_{i\in I}$ of elements of a set $Y$ is a function from a directed set $(I,\preceq)$ into $Y$. If $Y$ is a topological space and $y\in Y$, we write $\lim_{i\in I} y_{i} = y$ to mean that for every neighborhood $U$ of $y$, there is an $i_{U}\in I$ such that $y_{i}\in U$ whenever $i_{U}\preceq i$.

If $(y_{i})_{i\in I}$ is a net, a \emph{subnet}\footnote{We use the definition known as ``Willard subnet,'' see Chapter 7 of \cite{Schechter_Handbook} for detailed discussion of the various definitions.} $(y_{i(j)})_{j\in J}$ is given by a directed set $(J,\preceq')$ and a function $j\mapsto i(j)$ from $J$ to $I$ such that for all $j_{1}, j_{2}\in J$, $j_{1}\preceq' j_{2}$ $\implies i(j_{1})\preceq i(j_{2})$ and for all $i_{0}\in I$, there is a $j\in J$ such that $i_{0}\preceq i(j)$.  If $(y_{n})_{n\in \mathbb N}$ is a sequence, a subnet $(y_{n(j)})_{j\in J}$ is given by a directed set $(J,\preceq)$ and a function $j\mapsto n(j)$ from $J$ to $\mathbb N$ such that $i\preceq j$ $\implies$ $n(i)\leq n(j)$, and the set $\{n(j):j\in J\}$ is unbounded in $\mathbb N$.  If $Y$ is a compact topological space, then every net of elements of $Y$ has a convergent subnet; see \cite[p.~452]{Schechter_Handbook} for a proof.

\begin{definition} In a countable abelian group $\Gamma$, a \emph{F{\o}lner sequence} $\mb \Phi = (\Phi_{n})_{n\in \mathbb N}$ is a sequence of finite subsets of $\Gamma$ satisfying
	\begin{equation}\label{eq:Folner}
		\lim_{n\to\infty} \frac{|\Phi_n\triangle (\Phi_n+t)|}{|\Phi_n|}=0 \quad \text{ for every } t\in \Gamma.  
	\end{equation}For $A\subseteq \Gamma$, the \emph{upper density of $A$ along $\mathbf{\Phi}$} is $\udens_{\mb \Phi}(A):=\limsup_{n\to \infty} \frac{|A\cap \Phi_n|}{|\Phi_n|}$, while the \emph{lower density of $A$ along $\mathbf{\Phi}$} is $\ldens_{\mb \Phi}(A):=\liminf_{n\to \infty} \frac{|A\cap \Phi_n|}{|\Phi_n|}$.  When $\udens_{\mb \Phi}(A)=\ldens_{\mb \Phi}(A)$, we write $\dens_{\mb \Phi}(A)$ for the common value.
\end{definition}
When $\Gamma$ is uncountable no sequence of finite sets can satisfy (\ref{eq:Folner}); nets provide a satisfactory analogue. A \emph{F{\o}lner net} for $\Gamma$ is a net $(\Phi_{i})_{i\in I}$ of finite subsets of $\Gamma$ satisfying $\lim_{i\in I} \frac{|\Phi_i \triangle (\Phi_i+t)|}{|\Phi_i|}=0$ for every $t \in \Gamma$.  The \emph{lower density} and \emph{upper density} of $A$ \emph{along} $\mb \Phi$ are defined as $\ldens_{\mb \Phi}(A):= \lim_{i\in I} \inf_{i\preceq j} |A\cap \Phi_{j}|/|\Phi_{j}|$ and $\udens_{\mb \Phi}(A):= \lim_{i\in I} \sup_{i\preceq j} |A\cap \Phi_{j}|/|\Phi_{j}|$, respectively.
\begin{definition}
	A F{\o}lner net $\mb \Phi$ is \emph{full} if $\ldens_{\mb \Phi}(A)=\udens_{\mb \Phi}(A)$ for all $A\subseteq \Gamma$. Equivalently, $\mb \Phi$ is full if the function $m:\ell^{\infty}(\Gamma)\to \mathbb C$ given by $m(f):=\lim_{i\in I} \frac{1}{|\Phi_{i}|}\sum_{\gamma \in \Gamma} f(\gamma)$ is well defined, and consequently is an invariant mean.
\end{definition}
Weak$^{*}$ compactness of the unit ball in $\ell^{\infty}(\Gamma)^{*}$ implies that every F{\o}lner net has a subnet which is a full F{\o}lner net. This observation yields the following lemma.
\begin{lemma}\label{lem:FolnerToMean}
	Let $\mb \Phi$ be a F{\o}lner net for $\Gamma$.  Then there is an invariant mean $m$ on $\ell^{\infty}(\Gamma)$ such that $\ldens_{\mb \Phi}(A)\leq m(A) \leq \udens_{\mb \Phi}(A)$ for all $A\subseteq \Gamma$.
\end{lemma}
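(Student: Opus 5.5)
We take a weak$^{*}$ limit point of the normalized counting functionals along $\mb \Phi$; the only real work is checking invariance and comparing with $\ldens_{\mb\Phi}$ and $\udens_{\mb\Phi}$.

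For each $i\in I$ define $m_{i}\in \ell^{\infty}(\Gamma)^{*}$ by $m_{i}(f):=\frac{1}{|\Phi_{i}|}\sum_{\gamma\in \Phi_{i}} f(\gamma)$. Each $m_{i}$ is a mean, since it is positive and $m_{i}(1_{\Gamma})=1$. Hence $(m_{i})_{i\in I}$ is a net in the weak$^{*}$-compact set $\mathcal M(\Gamma)$, and it has a convergent subnet $(m_{i(j)})_{j\in J}$ with limit some $m\in \mathcal M(\Gamma)$.

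Next we show $m$ is invariant. Fix $f\in \ell^{\infty}(\Gamma)$ and $t\in\Gamma$. A direct computation gives
\[
|m_{i}(\tau_{t}f)-m_{i}(f)|\leq \|f\|_{\infty}\,\frac{|\Phi_{i}\triangle(\Phi_{i}+t)|}{|\Phi_{i}|}.
\]
The right-hand side tends to $0$ along $I$ by the F{\o}lner property. It therefore also tends to $0$ along the subnet, because $i(j)$ is eventually beyond any given $i_{0}$. Passing to the weak$^{*}$ limit gives $m(\tau_{t}f)=m(f)$.

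Finally, fix $A\subseteq\Gamma$ and $\varepsilon>0$. By the definition of $\ldens_{\mb\Phi}$ there is $i_{0}\in I$ such that $|A\cap\Phi_{k}|/|\Phi_{k}|\geq \ldens_{\mb\Phi}(A)-\varepsilon$ for all $k\succeq i_{0}$. Here we use that $i\mapsto \inf_{i\preceq k}(\cdot)$ is monotone, so its limit is a supremum. By the subnet property there is $j_{0}\in J$ with $i_{0}\preceq i(j_{0})$. Monotonicity of $j\mapsto i(j)$ and transitivity then give $i_{0}\preceq i(j)$ for all $j\succeq j_{0}$. So $m_{i(j)}(1_{A})\geq \ldens_{\mb\Phi}(A)-\varepsilon$ eventually, and in the limit $m(A)\geq \ldens_{\mb\Phi}(A)-\varepsilon$. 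The upper bound $m(A)\leq\udens_{\mb\Phi}(A)+\varepsilon$ follows symmetrically, and letting $\varepsilon\to0$ finishes the proof.

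None of these steps is a genuine obstacle. The only point needing care is the net bookkeeping: that "eventually in $I$" transfers to "eventually in $J$" via the Willard-subnet conditions, including the monotonicity used to keep all later $i(j)$ beyond $i_{0}$.
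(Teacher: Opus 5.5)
Your proof is correct and follows the paper's route. The paper only remarks that weak$^{*}$ compactness yields a subnet of $\mathbf{\Phi}$ that is full, meaning the averaging functionals $m_{i(j)}$ converge weak$^{*}$, and takes the limit mean; you have filled in the invariance estimate and the Willard-subnet bookkeeping for the density bounds that the paper leaves implicit.
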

The next lemma is well-known; we give a new proof using a recent result.
\begin{lemma}\label{lem:UBD}
	Let $\Gamma$ be an abelian group and let $A\subseteq \Gamma$.  Then	
\begin{equation}\label{eq:UBDforNets}
	\dens^{*}(A)=	\sup \{\dens_{\mb \Phi}(A): \mb \Phi \text{ is a F{\o}lner net for } \Gamma\}.
\end{equation}
\end{lemma}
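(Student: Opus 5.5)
We prove the two inequalities in (\ref{eq:UBDforNets}) separately. Here $\dens^{*}(A)=\sup\{m(A):m\in\mathcal M_{\tau}(\Gamma)\}$ by Definition \ref{def:UBD}, and $\dens_{\mb\Phi}(A)$ is only meaningful when the limit exists. So the right-hand side should be read as the supremum over those F{\o}lner nets along which $A$ has a density. Equivalently, by passing to subnets, it is the supremum of $\udens_{\mb\Phi}(A)$ over all F{\o}lner nets.

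For the inequality ``$\geq$'', let $\mb\Phi$ be a F{\o}lner net with $\dens_{\mb\Phi}(A)$ defined. Lemma \ref{lem:FolnerToMean} gives an invariant mean $m$ with $\ldens_{\mb\Phi}(A)\leq m(A)\leq\udens_{\mb\Phi}(A)$. Hence $m(A)=\dens_{\mb\Phi}(A)$, and so $\dens_{\mb\Phi}(A)\leq\dens^{*}(A)$.

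For the inequality ``$\leq$'', we must show that every invariant mean value $m(A)$ is at least approached by densities along F{\o}lner nets. Fix $m\in\mathcal M_{\tau}(\Gamma)$; by the earlier lemma we may take $m$ extreme with $m(A)=\dens^{*}(A)$. The plan is to show that $m$ lies in the weak$^{*}$ closure of the set of ``finite averages'' $\mu_{F}(f):=|F|^{-1}\sum_{\gamma\in F}f(\gamma)$, with $F$ ranging over finite sets that are nearly invariant. Concretely, we build a net indexed by triples $(E,S,\varepsilon)$, where $E\subseteq\ell^{\infty}(\Gamma)$ is a finite set of functions, $S\subseteq\Gamma$ is finite, and $\varepsilon>0$. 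For each triple we seek a finite $F$ satisfying
\[
\frac{|F\triangle(F+t)|}{|F|}<\varepsilon \ \text{for } t\in S, \qquad |\mu_{F}(f)-m(f)|<\varepsilon \ \text{for } f\in E.
\]
Ordering the triples by inclusion and by decreasing $\varepsilon$ makes $(F_{E,S,\varepsilon})$ a F{\o}lner net that converges weak$^{*}$ to $m$. Taking $1_{A}\in E$ then gives $\dens_{\mb\Phi}(A)=m(A)$.

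The main obstacle is producing such an $F$. We use the standard Namioka-style argument.
\begin{enumerate}
\item Finitely supported probability densities $\phi$ on $\Gamma$ are weak$^{*}$-dense in the means. This follows from Hahn--Banach, since $\ell^{1}$ is weak$^{*}$ dense in $(\ell^{\infty})^{*}$ and positivity passes to the closure.
\item Invariance of $m$ implies $\tau_{t}\phi-\phi\to0$ weakly in $\ell^{1}$. Since the set of such $\phi$ is convex, Mazur's lemma lets us take convex combinations and obtain $\|\tau_{t}\phi-\phi\|_{1}<\varepsilon$ for $t\in S$, while keeping $\phi$ weak$^{*}$-close to $m$ on $E$.
\item Write $\phi=\int_{0}^{\infty}1_{\{\phi>s\}}\,ds$ (the layer-cake formula). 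The coarea identity
\[
\|\tau_{t}\phi-\phi\|_{1}=\int_{0}^{\infty}\bigl|\{\phi>s\}\triangle(\{\phi>s\}+t)\bigr|\,ds
\]
then shows that some level set $F=\{\phi>s\}$ is nearly invariant.
\end{enumerate}

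The delicate point is step 3: we must simultaneously retain $|\mu_{F}(f)-m(f)|<\varepsilon$ on $E$. We handle this by averaging. The measure $ds$ weighted by $|\{\phi>s\}|$ represents $\phi$ as a convex combination of the averages $\mu_{\{\phi>s\}}$. By a Markov-type counting argument, a set of $s$ of large weight gives levels that are both nearly invariant and close to $m$ on $E$. If this fails, a fallback is to invoke the ``recent result'' alluded to in the text, presumably from \cite{Griesmer_DiscreteSumsets}, which realizes extreme invariant means as limits along F{\o}lner nets.
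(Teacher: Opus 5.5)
Your inequality $\dens_{\mb\Phi}(A)\leq\dens^{*}(A)$ via Lemma \ref{lem:FolnerToMean} is exactly the paper's argument. Steps 1--3 (weak$^{*}$ density of finitely supported densities, Mazur, coarea) are the standard Namioka argument and are fine.

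The gap is the claim that a Markov-type counting argument yields a level set $F_{s}=\{\phi>s\}$ that is \emph{both} $(S,\varepsilon)$-invariant \emph{and} satisfies $|\mu_{F_{s}}(f)-m(f)|<\varepsilon$ for all $f\in E$. Put $d\lambda(s)=|F_{s}|\,ds$. Markov's inequality applies to the invariance defect $Y(s)=\sum_{t\in S}|F_{s}\triangle(F_{s}+t)|/|F_{s}|$, because it is nonnegative with small $\lambda$-integral. For $f\in E$, however, you only control the signed average $\int\mu_{F_{s}}(f)\,d\lambda(s)=\phi(f)\approx m(f)$, which says nothing about individual levels.

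Here is a concrete failure in $\mathbb Z$. Take long intervals $I\subseteq J$ with $|J|\gg|I|$, $I\subseteq A$ and $(J\setminus I)\cap A=\varnothing$. Then $\phi=\frac{1}{2|I|}1_{I}+\frac{1}{2|J|}1_{J}$ is nearly invariant and $\phi(1_{A})\approx\frac12$. Yet its only level sets are $I$ and $J$, with $\mu_{I}(A)=1$ and $\mu_{J}(A)\approx 0$. Nothing in steps 1--2 excludes such a $\phi$, even when $m$ is extreme. So your construction does not show $\mu_{F_{E,S,\varepsilon}}\to m$ weak$^{*}$, and the assertion that $m$ is a F{\o}lner-net limit remains unproved.

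For this lemma you do not need two-sided approximation, and a one-sided version of your counting argument closes the gap. Take $m$ with $m(A)=\dens^{*}(A)$ and $E=\{1_{A}\}$, and let $G=\{s:Y(s)<\varepsilon'\}$. Markov gives $\lambda(G^{c})\leq|S|\varepsilon/\varepsilon'$. Since $\mu_{F_{s}}(A)\leq 1$ and $\lambda(G)\leq 1$, some $s\in G$ satisfies
$\mu_{F_{s}}(A)\geq\int_{G}\mu_{F_{s}}(A)\,d\lambda(s)\geq\phi(1_{A})-\lambda(G^{c})>m(A)-\varepsilon-|S|\varepsilon/\varepsilon'$.
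Choosing $\varepsilon\ll\varepsilon'$ gives $(S,\varepsilon')$-invariant sets $F$ with $\mu_{F}(A)>\dens^{*}(A)-2\varepsilon'$. The resulting F{\o}lner net has $\liminf\mu_{F}(A)\geq\dens^{*}(A)$. A subnet along which the density exists then has density exactly $\dens^{*}(A)$, by your first inequality.

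This gives a self-contained elementary proof, and it also shows the supremum is attained. The paper instead obtains the reverse inequality by citing Theorem 5.9 of \cite{Hopfensperger_WhenIsAMean}. That result supplies, for an \emph{arbitrary} invariant mean $m$, a F{\o}lner net with $\dens_{\mb\Phi}(A)=m(A)$. This is the ``recent result'' mentioned in the text, not a result of \cite{Griesmer_DiscreteSumsets} as your fallback guesses.
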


\begin{proof}
	Fix $A\subseteq \Gamma$ and let 
	\[\alpha := \sup\{m(A): m\in \mathcal M_{\tau}(\Gamma)\}, \quad \beta:=\sup\{\dens_{\mb \Phi}(A):\mb \Phi \text{ is a F{\o}lner net for } \Gamma\}.\] Lemma \ref{lem:FolnerToMean} implies  $
	\{\dens_{\mb \Phi}(A):\mb \Phi \text{ is a F{\o}lner net for } \Gamma\} \subseteq \{m(A):m\in \mathcal M_{\tau}(\Gamma)\}$,
	so $\beta \leq \alpha$.  To prove the reverse inequality, let $m$ be an invariant mean on $\ell^{\infty}(\Gamma)$.  Theorem 5.9 of \cite{Hopfensperger_WhenIsAMean} provides a F{\o}lner net $\mb \Phi$ such that $\dens_{\mb \Phi}(A)=m(A)$, so $\beta\geq \dens_{\mb \Phi}(A) \geq m(A)$.  Since this is true for every invariant mean $m$ on $\ell^{\infty}(\Gamma)$, we get $\beta\geq  \alpha$.
\end{proof}

\subsection{Kneser-type theorems for F{\o}lner sequences and nets}

The next proposition is an analogue of Theorem \ref{th:KneserLAD} for full F{\o}lner nets.  Theorem \ref{th:MainFolnerStrict} is the generalization to arbitrary F{\o}lner sequences and F{\o}lner nets.

\begin{proposition}\label{prop:FullFolnerStrict}
	Let $\Gamma$ be a discrete abelian group, $A$, $B\subseteq \Gamma$ nonempty, and let $K$ be the KJ-stabilizer of $A+B$ given by Theorem \ref{th:OneExtreme}, so that $K$ has finite index in $\Gamma$. If $\mb \Phi=(\Phi_{i})_{i\in I}$ is a \emph{full} F{\o}lner net satisfying 
	\[\delta:=\dens_{\mb \Phi}(A)+\dens_{\mb \Phi}(B)-\dens_{\mb \Phi}(A+B)>0,\]
	then there is a full F{\o}lner net $\mb \Psi=(\Psi_{i})_{i\in I}$ (with the same directed set $I$ as $\mb \Phi$)  such that $\Psi_{i}\subseteq \Phi_{i}$ for all $i\in I$, and each of the following hold:
	\begin{enumerate}
		\item$	\liminf_{i\in I} |\Psi_{i}|/|\Phi_{i}|\geq [\Gamma:K]\delta$;
\item\label{eq:GeneralKneser}
		$\dens_{\mb \Psi}(A+B)=\dens_{\mb \Psi}(A+B+K)$;
\item $\dens_{\mb \Psi}(A)+\dens_{\mb \Psi}(B) - \dens_{\mb \Psi}(A+B)\geq \delta$.
	\end{enumerate}
\end{proposition}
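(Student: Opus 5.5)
The plan is to deduce Proposition \ref{prop:FullFolnerStrict} from Theorem \ref{th:MainOneMeanStrict} and then transfer the abstract restriction back to subsets of the $\Phi_i$. Let $m$ be the invariant mean $m(f)=\lim_{i\in I}|\Phi_i|^{-1}\sum_{\gamma\in\Phi_i}f(\gamma)$ determined by the full net $\mb\Phi$, so $m(A)+m(B)-m(A+B)=\delta>0$. Theorem \ref{th:MainOneMeanStrict} gives an invariant mean $m'$ that is a $k\delta$-restriction of $m$ ($k=[\Gamma:K]$), with $m'(A+B)=m'(A+B+K)$ and $m'(A)+m'(B)-m'(A+B)\geq\delta$. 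Let $C_n$ be sets witnessing Definition \ref{def:StrongAbsoluteContinuity}. Since $m'$ is invariant, $m(C_n\triangle(C_n+\gamma))\to 0$ for every $\gamma$.

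The natural candidate is $\Psi_i:=\Phi_i\cap C_{n(i)}$ for a suitable choice $i\mapsto n(i)$. For a fixed $n$, $|\Phi_i\cap C_n|/|\Phi_i|\to m(C_n)$, and $|\Phi_i\cap C_n\cap E|/|\Phi_i\cap C_n|\to m(C_n)^{-1}m(1_EC_n)$ for every $E$. In addition,
\[
|(\Phi_i\cap C_n)\triangle(\Phi_i\cap C_n+t)|\leq|\Phi_i\triangle(\Phi_i+t)|+|\Phi_i\cap(C_n\triangle(C_n+t))|+\text{small},
\]
so the Følner ratio of $\Phi_i\cap C_n$ is eventually at most about $m(C_n\triangle(C_n+t))/m(C_n)$. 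Hence, for each $n$, finitely many constraints hold along a tail of $I$, and the errors tend to $0$ as $n\to\infty$.

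The main obstacle is choosing a single index $n(i)$ for each $i\in I$ in a general directed set, while controlling uncountably many $t\in\Gamma$ and all sets $E\subseteq\Gamma$ at once. Fullness of $\mb\Psi$ requires convergence of $|\Psi_i\cap E|/|\Psi_i|$ for every $E$, and a diagonal choice cannot handle all $E$ uniformly.

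To handle this, I will use the Cauchy property \eqref{eq:CnCauchy}. It gives $|\Phi_i\cap(C_n\triangle C_{n'})|/|\Phi_i|\to m(C_n\triangle C_{n'})$, which is small uniformly in $n,n'\geq N$. This suggests replacing the whole sequence by a single limiting object. Since the $1_{C_n}$ are Cauchy in $L^1(m)$, one can find $h$ with $0\le h\le 1$ and try $\Psi_i$ equal to a set $\{\gamma\in\Phi_i: h(\gamma)>s\}$ with thresholding. But $L^1(m)$ is not complete, so this is delicate.

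A cleaner alternative is to let $n(i)$ be the largest $n\leq N_i$ satisfying finitely many error bounds at stage $i$. Here $N_i$ is defined via a cofinal structure: for each $n$, let $i_n$ be such that the tests for $t$ and $E$ in finite sets $T_n,\mathcal E_n$ hold past $i_n$. This controls only finitely many $t$ and $E$ at each stage. So I will first obtain a Følner net with the three properties for the specific sets $A,B,A+B,A+B+K$ (which is all the conclusion needs). I will then use a full subnet argument, or note that $C_n$ can be chosen with $m(C_n\triangle C_{n'})$ summable, so that $|\Psi_i\cap E|/|\Psi_i|-m(C_{n(i)})^{-1}m(1_EC_{n(i)})\to0$ uniformly in $E$. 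This last uniform estimate is the crux.

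Given it, the conclusions follow. First, $\liminf|\Psi_i|/|\Phi_i|\geq\lim m(C_n)\geq k\delta$. Second, $\dens_{\mb\Psi}(E)=m'(E)$, which yields (ii) and (iii).
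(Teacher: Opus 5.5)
Your reduction is the paper's: apply Theorem \ref{th:MainOneMeanStrict} to the mean $m$ determined by $\mb\Phi$, then pull the restriction $m'$ back to subsets $\Psi_i=\Phi_i\cap C_{n(i)}$ (the paper does this in Lemma \ref{lem:CauchyAndInvariantSetsToFolner}). The gap is in that transfer step, which you leave open, and the estimate you call the crux is false.

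When $\Gamma$ is infinite, $\bigl|\,|\Psi_i\cap E|/|\Psi_i|-m(C_{n(i)})^{-1}m(1_EC_{n(i)})\bigr|$ cannot tend to $0$ uniformly in $E$, for any choice of $C_n$ and $n(i)$. Take $E=\Psi_i$: the first term is $1$ and the second is $0$, because finite sets are $m$-null. Summability of $m(C_n\triangle C_{n'})$ does not change this.

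Your fallbacks do not close the gap either:
\begin{enumerate}
\item Passing to a full subnet is not available, since $\mb\Psi$ must be indexed by the same $I$ with $\Psi_i\subseteq\Phi_i$.
\item Controlling only $t$ in countably many finite sets $T_n$ does not give the F{\o}lner property for every $t\in\Gamma$ when $\Gamma$ is uncountable.
\item You never specify $N_i$ so that $n(i)$ is finite and tends to infinity along an arbitrary directed set.
\end{enumerate}

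The uniformity you need is relative, not absolute: compare $\Psi_i$ with $C_n\cap\Phi_i$ for a \emph{fixed} large $n$. The paper chooses $n_1<n_2<\cdots$ and $i_1\preceq i_2\preceq\cdots$ so that, for all $i\succeq i_k$ and all $n,n'\leq n_k$, both
$\bigl||C_n\cap\Phi_i|/|\Phi_i|-m(C_n)\bigr|<\varepsilon_k$ and $|C_n\cap C_{n'}\cap\Phi_i|/|\Phi_i|\geq m(C_n\cap C_{n'})-\varepsilon_k$ hold.

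It also requires $|\Phi_i\cap(\Phi_i+\gamma)|>(1-\varepsilon_k)|\Phi_i|$ for $\gamma$ in a strictly increasing sequence of finite sets $\Gamma_k$. Since $\Phi_i$ is finite, this last condition prevents any $i$ from dominating every $i_k$ (Observation \ref{obs:Cofinal}). So the following definition makes sense: $\Psi_i:=C_{n_k}\cap\Phi_i$ for $i\succeq i_k$, $i\nsucceq i_{k+1}$, and $\Psi_i:=\Phi_i$ if $i\nsucceq i_1$.

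The pair conditions and the Cauchy property make $|\Psi_i\triangle(C_n\cap\Phi_i)|/|\Phi_i|$ small for all $i\succeq i_k$, whenever $n\leq n_k$ and $n$, $k$ are large. This single estimate is uniform in $E$ and in $t$:
\begin{enumerate}
\item Combined with fullness of $\mb\Phi$ on the one set $C_n\cap E$, it gives $\dens_{\mb\Psi}(E)=m'(E)$ for every $E$, so $\mb\Psi$ is full.
\item Combined with $m(C_n\triangle(C_n+t))\to0$, it gives the F{\o}lner property for every $t\in\Gamma$.
\end{enumerate}
With that in place, your final paragraph deriving (i)--(iii) is correct.
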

We prove Proposition \ref{prop:FullFolnerStrict} in \S\ref{sec:MainProofs}.  It will be derived from Theorem \ref{th:MainOneMeanStrict} above.

\begin{theorem}\label{th:MainFolnerStrict}
	Let $\Gamma$ be a discrete abelian group,  $A$, $B\subseteq \Gamma$ nonempty, and let $K$ be the KJ-stabilizer of $A+B$ given by Theorem \ref{th:OneExtreme}, so that $K$ has finite index in $\Gamma$. If $\mb \Phi=(\Phi_{i})_{i\in I}$ is a F{\o}lner net (F{\o}lner sequence) satisfying 
\begin{equation}\label{eq:DeltaHypothesis}\delta:=\ldens_{\mb \Phi}(A)+\ldens_{\mb \Phi}(B)-\ldens_{\mb \Phi}(A+B)>0,
\end{equation}
then there is a subnet $\mb \Phi'=(\Phi_{i(j)})_{j\in J}$ (subsequence $(\Phi_{n_{j}})_{j\in \mathbb N}$) and a F{\o}lner net (F{\o}lner sequence) $\mb \Psi$ such that $\Psi_{j}\subseteq \Phi_{i(j)}$ for all $j\in J$, and each of the following hold:
	\begin{enumerate}
	\item\label{eq:subsetDense} $	\liminf_{j\in J} |\Psi_{j}|/|\Phi_{i(j)}|\geq [\Gamma:K]\delta$;
	\item\label{eq:SubnetGeneralKneser}
	$\dens_{\mb \Psi}(A+B)=\dens_{\mb \Psi}(A+B+K)$;
	\item\label{eq:SubnetDelta} $\dens_{\mb \Psi}(A)+\dens_{\mb \Psi}(B) - \dens_{\mb \Psi}(A+B)\geq \delta$.
\end{enumerate}
\end{theorem}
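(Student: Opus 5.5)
The plan is to reduce Theorem \ref{th:MainFolnerStrict} to Proposition \ref{prop:FullFolnerStrict} by passing to a full subnet along which the three relevant densities converge, with values at least as favorable as the liminfs in (\ref{eq:DeltaHypothesis}).

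\emph{Step 1: choose the subnet.} Put $g_{i}:=\bigl(|A\cap\Phi_{i}|+|B\cap\Phi_{i}|-|(A+B)\cap\Phi_{i}|\bigr)/|\Phi_{i}|$. First pass to a subnet along which $|(A+B)\cap\Phi_{i}|/|\Phi_{i}|$ converges to $\ldens_{\mb\Phi}(A+B)$; such a subnet exists because a liminf of a net is a cluster point. Along any further subnet we have $\liminf$ of $|A\cap\Phi|/|\Phi|$ at least $\ldens_{\mb\Phi}(A)$, and the same for $B$, since a subnet's liminf dominates that of the original net. Next apply weak$^{*}$ compactness of $\mathcal M(\Gamma)$ to the net of normalized counting measures on $\Phi_{i}$ and obtain a further subnet $\mb\Phi'=(\Phi_{i(j)})_{j\in J}$ that is a full F{\o}lner net. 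Then
\[
\dens_{\mb\Phi'}(A)+\dens_{\mb\Phi'}(B)-\dens_{\mb\Phi'}(A+B)\geq \ldens_{\mb\Phi}(A)+\ldens_{\mb\Phi}(B)-\ldens_{\mb\Phi}(A+B)=\delta .
\]
Denote the left side by $\delta'\geq\delta>0$.

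\emph{Step 2: apply Proposition \ref{prop:FullFolnerStrict}.} Applying it to $\mb\Phi'$ gives a full F{\o}lner net $\mb\Psi=(\Psi_{j})_{j\in J}$ with $\Psi_{j}\subseteq\Phi_{i(j)}$. Its three conclusions hold with $\delta'$, and therefore with $\delta$. This yields (i)--(iii) directly, because $[\Gamma:K]\delta'\geq[\Gamma:K]\delta$ and the gap along $\mb\Psi$ is at least $\delta'\geq\delta$. The subgroup $K$ depends only on $A+B$, so it is the same KJ-stabilizer in both statements.

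\emph{Step 3: the sequence case.} This is the main technical point. When $\mb\Phi$ is a F{\o}lner sequence, the subnet from Step 1 need not be a subsequence, and Proposition \ref{prop:FullFolnerStrict} produces a net, not a sequence, so it must be converted. I would use a diagonal argument. Since $\Gamma$ is countable in this case (it carries a F{\o}lner sequence), enumerate $\Gamma=\{t_{1},t_{2},\dots\}$. Consider the countably many conditions relevant to the conclusion: almost-invariance under $t_{1},\dots,t_{r}$, the ratio $|\Psi|/|\Phi|$ being within $1/r$ of exceeding $[\Gamma:K]\delta$, and the densities of $A$, $B$, $A+B$, $A+B+K$ relative to $\Psi$ being within $1/r$ of their limits along $\mb\Psi$. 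For each $r$, all of these conditions hold for $j$ sufficiently large in $J$. Since the subnet map is cofinal in $\mathbb N$, we can pick $j_{r}$ with $n_{r}:=i(j_{r})>n_{r-1}$ satisfying the $r$-th conditions. The sequence $(\Psi_{j_{r}})_{r}$ is then F{\o}lner, lies inside the subsequence $(\Phi_{n_{r}})$, and satisfies (i)--(iii) with genuine limits. The densities pass to this sequence because the finitely many numerical quantities are controlled at each stage.
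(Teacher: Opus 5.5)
Your proposal is correct and follows the paper's proof essentially step for step: pass to a full subnet along which the gap is at least $\delta$, apply Proposition \ref{prop:FullFolnerStrict}, and in the sequence case extract a subsequence by a diagonal choice of indices over an exhaustion of the countable group $\Gamma$. Your diagonal conditions require the densities of $A$, $B$, $A+B$ and $A+B+K$ each to converge along the extracted sequence, which is if anything slightly more explicit than the paper's conditions (\ref{eq:PsiAlmostDStar})--(\ref{eq:PsiDelta}).
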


\begin{remark}
	The hypothesis (\ref{eq:DeltaHypothesis}) may be weakened to
	\begin{equation}\label{eq:WeakerHyp}
		\delta:=\limsup_{i\in I} |A\cap \Phi_{i}|/|\Phi_{i}|+|B\cap \Phi_{i}|/|\Phi_{i}| - |(A+B)\cap \Phi_{i}|/|\Phi_{i}|>0
	\end{equation}
without altering the conclusion: assuming (\ref{eq:WeakerHyp}), some subnet of $\mb \Phi$  satisfies (\ref{eq:DeltaHypothesis}).  
\end{remark}

\begin{proof}[Proof of Theorem \ref{th:MainFolnerStrict}] When $\mb \Phi$ is a F{\o}lner net, Theorem \ref{th:MainFolnerStrict} follows immediately from Proposition \ref{prop:FullFolnerStrict}: every subnet $\mb \Phi'$ of $\mb \Phi$ with $\dens_{\mb \Phi'}(A+B)=\ldens_{\mb \Phi}(A+B)$ has a subnet $\mb \Phi''$ which is full and satisfies $\dens_{\mb \Phi''}(A)+\dens_{\mb \Phi''}(B)-\dens_{\mb \Phi''}(A+B)\geq \delta$.

When $\mb \Phi$ is a F{\o}lner sequence, $\Gamma$ is necessarily countable. We find the desired sequence $\mb \Psi$ as follows: let $\mb \Phi'=(\Phi_{n(j)})_{j\in J}$ be a subnet of $\mb \Phi$ which is a full F{\o}lner net, and let $\mb \Psi=(\Psi_{n(j)})_{j\in J}$ be the corresponding F{\o}lner net satisfying the conclusion of Proposition \ref{prop:FullFolnerStrict}.

Below we will write $\dens_{F}(C)$ for $|C\cap F|/|F|$ when $F\subseteq \Gamma$ is finite. 

 Write $\Gamma$ as a countable union of finite subsets $E_{m}$, $m\in \mathbb N$.  From the conclusion of Proposition (\ref{prop:FullFolnerStrict}), we may find a sequence of indices $j(m)\in J$ satisfying each of the following:
\begin{equation}\label{eq:Index}
n(j(1))<n(j(2))<\dots
\end{equation}
\begin{equation}
\label{eq:PsiDense} |\Psi_{n(j(m))}|/|\Phi_{n(j(m))}|\geq [\Gamma:K]\delta-\frac{1}{m};
\end{equation}
\begin{equation}\label{eq:PsiInFolner}	\frac{|\Psi_{n(i(m))}\triangle (\Psi_{n(j(m))}+t)|}{|\Psi_{n(j(m))}|}< \frac{1}{m} \quad \text{for all } t\in E_{m};
\end{equation}
\begin{equation}\label{eq:PsiAlmostDStar}
	\dens_{\Psi_{n(j(m))}}(A+B) > \dens^{*}(A+B)-\frac{1}{m};
\end{equation}
\begin{equation}\label{eq:PsiDelta}	\dens_{\Psi_{n(j(m))}}(A)+\dens_{\Psi_{n(j(m))}}(B)-\dens_{\Psi_{n(j(m))}}(A+B)> \delta -\frac{1}{m}.
\end{equation}
Let $\mb \Psi = (\Psi_{n(j(m))})_{m\in \mathbb N}$.  Then $\mb \Psi$ is a F{\o}lner sequence, by (\ref{eq:PsiInFolner}).    By (\ref{eq:Index}), $(\Phi_{n(j(m))})_{m\in \mathbb N}$ is a subsequence of $\mb \Phi$.  Conclusions \ref{eq:subsetDense}, \ref{eq:SubnetGeneralKneser}, and \ref{eq:SubnetDelta} follow from \ref{eq:PsiDense}, \ref{eq:PsiAlmostDStar}, and \ref{eq:PsiDelta}.
\end{proof}

\section{Compact models for discrete sumsets}\label{sec:Models}

\subsection{Characters}  We write $\mathcal S^{1}$ for the circle group $\{z\in \mathbb C: |z|=1\}$, with the group operation of multiplication and the usual topology.

Let $G$ be an LCA group.  A \emph{character} of $G$ is a continuous homomorphism $\chi:G\to \mathcal S^{1}$.  The set of all characters of $G$ is denoted $\widehat{G}$.  Note that when $G$ is discrete, every homomorphism $\chi:G\to \mathcal S^{1}$ is continuous.

\subsection{Bohr compactification} 
Let $\Gamma$ be a discrete abelian group.  The \emph{Bohr compactification}  of $\Gamma$ is a compact abelian group $b\Gamma$, together with a one-to-one homomorphism $\iota:\Gamma\to b\Gamma$, such that
\begin{enumerate}
	\item[(i)] $\iota(\Gamma)$ is a topologically dense subgroup of $b\Gamma$;
	\item[(ii)]\label{item:bGamma2} for every character $\chi\in \widehat{\Gamma}$, there is a continuous character $\tilde{\chi}\in \widehat{b\Gamma}$ such that $\chi = \tilde{\chi}\circ \iota$.
\end{enumerate}
We will identify $\Gamma$ with its image $\iota(\Gamma)$ in $b\Gamma$, meaning we consider $\Gamma$ as a topologically dense subgroup of $b\Gamma$.  With this identification, (ii) says that every $\chi\in \widehat{\Gamma}$ extends uniquely to a continuous character $\tilde{\chi}\in \widehat{b\Gamma}$.

We will write $\mu_{b\Gamma}$ for Haar probability measure on $b\Gamma$.

The following is Theorem 1.5 of \cite{Griesmer_DiscreteSumsets}. A subset of a topological space is an \emph{$F_{\sigma}$-set} if it is a countable union of compact sets.  Note that if $\tilde{A}$, $\tilde{B}\subseteq b\Gamma$ are $F_{\sigma}$-sets, then so is $\tilde{A}+\tilde{B}$, and consequently $\tilde{A}+\tilde{B}$ is $\mu_{b\Gamma}$-measurable.

\begin{theorem}\label{th:Model}  
	Let $\Gamma$ be a discrete abelian group.  Let $\nu\in \mathcal M_{\tau}^{ext}(\Gamma)$, $m\in \mathcal M_{\tau}(\Gamma)$, and $A$, $B\subseteq \Gamma$.  
	There are $F_{\sigma}$ sets $\tilde{A}_{\nu}$, $\tilde{B}_{m}\subseteq b\Gamma$  such that $\mu_{b\Gamma}(\tilde{A}_{\nu})\geq \nu(A)$, $\mu_{b\Gamma}(\tilde{B}_{m})\geq m(B)$, and
	\begin{equation}
		\nu(A+B)\geq \mu_{b\Gamma}(\tilde{A}_{\nu}+\tilde{B}_{m}).
	\end{equation}
	Furthermore,
	\begin{enumerate}
		\item\label{item:AVBV} if $V\subseteq b\Gamma$ is compact, then 
		\[\mu_{b\Gamma}(\tilde{A}_{\nu}\cap V)\geq \nu(A\cap V) \quad \text{and} \quad \mu_{b\Gamma}(\tilde{B}_{m}\cap V)\geq m(B\cap V);\]
		
		\item\label{item:VbGammaClopen} if $V\subseteq b\Gamma$ is clopen, then
		\[\mu_{b\Gamma}(\tilde{A}_{\nu}\cap V)=0\iff \nu(A\cap V)=0 \quad \text{and} \quad \mu_{b\Gamma}(\tilde{B}_{m}\cap V)=0\iff m(B\cap V)=0;\]
		
		\item\label{item:VcapGamma}  if $V\subseteq \tilde{A}_{\nu}+\tilde{B}_{m}$ is compact, then $V\cap \Gamma \subset_{\nu} A+B$.
	\end{enumerate}
\end{theorem}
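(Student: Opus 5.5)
The plan is a Furstenberg-type correspondence combined with a projection onto the Kronecker (almost periodic) factor. The extremality of $\nu$ is used exactly once, to control $A+B$ from below. For each invariant mean $m$ I will form the GNS Hilbert space $L^{2}(m)$, the completion of $\ell^{\infty}(\Gamma)$ under $\langle f,g\rangle_{m}:=m(f\bar g)$, on which $\Gamma$ acts unitarily by translation. Characters $\chi\in\widehat{\Gamma}$ are eigenvectors of this action. Let $P_{m}$ be the orthogonal projection onto the closed span of the characters (the Kronecker part).

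\emph{Step 1: representing the projection on $b\Gamma$.} Each character $\chi$ extends to $\tilde\chi\in\widehat{b\Gamma}$. Hence $P_{m}1_{B}=\sum_{\chi}c_{\chi}\chi$ corresponds to a function $\tilde g_{B}\in L^{2}(\mu_{b\Gamma})$ with the same Fourier coefficients $c_{\chi}=m(1_{B}\bar\chi)$. Positivity and $m(1_{B})$-mass are preserved, so $0\le \tilde g_{B}\le 1$ and $\int\tilde g_{B}\,d\mu_{b\Gamma}=m(B)$. I check positivity via Fej\'er-type averages of trigonometric polynomials, which are uniform limits of almost periodic functions evaluated against $m$. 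The same construction with $\nu$ gives $\tilde f_{A}$.

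I then take $\tilde B_{m}$ to be an $F_{\sigma}$ subset of $\{\tilde g_{B}>0\}$ of full measure in it, using inner regularity of Haar measure; this immediately gives $\mu(\tilde B_{m})\ge m(B)$. For (i), with $V$ compact, I approximate $1_{V}$ from above by continuous functions $\phi$, which restrict to almost periodic functions on $\Gamma$. Then $m(B\cap V)\le m(1_{B}\phi)=\int\tilde g_{B}\phi\,d\mu\le\mu(\tilde B_{m}\cap\{\phi>0\})$, and letting $\phi\downarrow 1_{V}$ gives the claim. For (ii), a clopen $V$ makes $1_{V}$ itself continuous and almost periodic, so the previous identity is exact: $m(B\cap V)=\int_{V}\tilde g_{B}\,d\mu$, which vanishes iff $\mu(\tilde B_{m}\cap V)=0$.

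\emph{Step 2: the sumset inequality and (iii).} This is the main obstacle. The key identity I will aim for is
\[\nu\bigl(1_{A+B}\cdot(1_{A}*_{m}\check 1_{B})\bigr)\text{-type correlations}\;\longleftrightarrow\;\tilde f_{A}*\tilde g_{B}.\]
Here ergodicity (extremality) of $\nu$ is what makes mixed averages of the form $\gamma\mapsto m(1_{B}\cdot\tau_{\gamma}1_{A})$ depend only on the Kronecker projections in the $\nu$-sense. Concretely, I will show that the function $h(x):=m\bigl(1_{B}(\cdot)1_{A}(x-\cdot)\bigr)$, defined pointwise on $\Gamma$, satisfies $h\le 1_{A+B}$ wherever $h>0$ (trivially), and that $\nu(h\phi)=\int(\tilde f_{A}*\tilde g_{B})\phi\,d\mu$ for continuous $\phi$. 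The latter requires a van der Corput / mean ergodic argument that uses extremality of $\nu$ to kill the weak-mixing component.

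Granting this, any compact $V\subseteq\{\tilde f_{A}*\tilde g_{B}>0\}$ gives $\nu(V\cap\Gamma\setminus(A+B))=0$, which is (iii). The inequality $\nu(A+B)\ge\mu(\{\tilde f_{A}*\tilde g_{B}>0\})$ then follows by inner regularity. Finally I must reconcile the positivity set of the convolution with the sumset $\tilde A_{\nu}+\tilde B_{m}$. I will handle this by shrinking $\tilde A_{\nu}$ and $\tilde B_{m}$ to their sets of Lebesgue density points, intersected with compact $F_{\sigma}$ pieces. For such sets, $\tilde A+\tilde B$ coincides up to a null set with $\{1_{\tilde A}*1_{\tilde B}>0\}\subseteq\{\tilde f_{A}*\tilde g_{B}>0\}$. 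This last measure-theoretic step in a nonmetrizable compact group, where density points need care and I would pass to metrizable quotients by countably many characters, is the second delicate point.
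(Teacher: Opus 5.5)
Your Step 1 is sound: characters are orthonormal in $L^2(m)$, and the identity $\int \tilde g_B\phi\,d\mu_{b\Gamma}=m(1_B\phi)$ for continuous $\phi$ gives $0\le \tilde g_B\le 1$, (i) and (ii) as you say. The density-point step is also workable on a metrizable quotient, taking an a.e.-convergent subsequence of $1_S*\mu(U_n)^{-1}1_{U_n}$ for symmetric $U_n$. What you actually need there is the exact containment $\tilde A_\nu+\tilde B_m\subseteq\{\tilde f_A*\tilde g_B>0\}$, not equality up to a null set; density points do give this. The paper itself does not prove this theorem; it quotes it from \cite{Griesmer_DiscreteSumsets}.

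\textbf{The gap is Step 2.} Your function $h(x)=m_y\bigl(1_B(y)1_A(x-y)\bigr)=\langle \tau_x\check 1_A,1_B\rangle_{L^2(m)}$, with $\check 1_A(y)=1_A(-y)$, sees $A$ only through $m$. It is a matrix coefficient of the translation representation on $L^2(m)$, so $h\phi$ is weakly almost periodic and $\nu(h\phi)$ is the same for every invariant mean. It therefore cannot equal $\int(\tilde f_A*\tilde g_B)\phi\,d\mu_{b\Gamma}$, which depends on $\nu$.

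A concrete counterexample: in $\mathbb Z$ take $A=\bigcup_n[n!,n!+n]$, $\nu$ extreme with $\nu(A)=1$, $m$ a limit of uniform averages on $[-N,N]$, and $B=2\mathbb Z$. Then $m(x-A)=0$ for every $x$, so $h\equiv 0$. Yet $\tilde f_A\equiv 1$ and $\tilde f_A*\tilde g_B\equiv 1/2$. So no ergodic argument rescues the identity, and $\{h>0\}$ can be empty while the conclusion is nontrivial. The two averages are taken in the wrong order.

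\textbf{The repair is to fix the translate first and integrate it against $\nu$.} Let $F=1_{\Gamma\setminus(A+B)}$ and $k(y):=\nu(\tau_y1_A\cdot F\phi)$. This vanishes for every $y\in B$, because $A+y\subseteq A+B$.

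Extremality of $\nu$ makes the only $\tau$-invariant vectors of $L^2(\nu)$ constants, so the eigenvectors are multiples of characters. Hence $k$ equals the almost periodic function $y\mapsto\int\tilde f_A(x-y)\tilde F(x)\phi(x)\,d\mu_{b\Gamma}(x)$, where $\tilde F$ is the $\nu$-model of $F$. The error is a matrix coefficient of the weakly mixing part, whose squared modulus has mean $0$ under every invariant mean, in particular under $m$.

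Applying $m$ with weight $1_B$ gives $0=m(1_Bk)=\int(\tilde f_A*\tilde g_B)\tilde F\phi\,d\mu_{b\Gamma}$ for all continuous $\phi\ge 0$. So $\tilde F=0$ a.e. on the open set $O=\{\tilde f_A*\tilde g_B>0\}$. This is where extremality belongs: in $L^2(\nu)$, in the translation variable, with $m$ applied afterwards.

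Both conclusions follow from this:
\begin{enumerate}
\item For (iii), take compact $V\subseteq O$ and a continuous $\phi$ with $1_V\le\phi\le 1$ supported in $O$; then $\nu((V\cap\Gamma)\setminus(A+B))\le\nu(F\phi)=0$.
\item For the density bound, take continuous $0\le\phi\le 1$ supported in $O$; then $\nu(A+B)\ge\nu(\phi)-\nu(F\phi)=\int\phi\,d\mu_{b\Gamma}$, and inner regularity on $O$ gives $\nu(A+B)\ge\mu_{b\Gamma}(O)\ge\mu_{b\Gamma}(\tilde A_\nu+\tilde B_m)$.
\end{enumerate}
The bound must go through continuous test functions rather than compact sets, as your ``inner regularity'' sentence suggests. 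The reason is that $\nu(V\cap\Gamma)$ can be far smaller than $\mu_{b\Gamma}(V)$.
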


The sets $\tilde{A}_{\nu}$, $\tilde{B}_{m}$, and $\tilde{A}_{\nu}+\tilde{B}_{m}\subseteq b\Gamma$ in Theorem \ref{th:Model} are models of $A$, $B$, and $A+B$.

\subsection{Finite index subgroups}  Fix a discrete abelian group $\Gamma$.

\begin{lemma}\label{lem:FiniteIndexCorrespondence}
  Let $\tilde{K}\leq  b\Gamma$ be a $\mu_{b\Gamma}$-measurable finite index subgroup and let $K=\tilde{K}\cap \Gamma$. Then
	
	\begin{enumerate}
		\item[(i)]  $K$ has finite index in $\Gamma$, and $[\Gamma:K]=[b\Gamma:\tilde{K}]$.
		
		\item[(ii)] If $x+\tilde{K}$ is a coset of $\tilde{K}$, then $(x+\tilde{K})\cap \Gamma$ is a coset of $K$.
		
		\item[(iii)] If $C\subseteq \Gamma$, then $C+K = (C+\tilde{K})\cap \Gamma$.
		
		\item[(iv)] The map $\rho: \Gamma/K\to b\Gamma/\tilde{K}$ given by $\rho(\gamma+K)=\gamma+\tilde{K}$ is a group isomorphism; its inverse is given by $\rho^{-1}(x+\tilde{K})=(x+\tilde{K})\cap \Gamma$.
		
		\item[(v)] If $C\subseteq b\Gamma$ and $\eta$ is an invariant mean on $\ell^{\infty}(\Gamma)$, then $\eta((C+\tilde{K})\cap \Gamma)=\mu_{b\Gamma}(C+\tilde{K})$.
	\end{enumerate}
\end{lemma}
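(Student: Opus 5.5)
The key point is that a measurable finite index subgroup $\tilde K$ of the compact group $b\Gamma$ is automatically clopen. It is measurable of positive measure $1/[b\Gamma:\tilde K]$, since $b\Gamma$ is a disjoint union of finitely many translates of equal Haar measure. By Steinhaus, $\tilde K-\tilde K=\tilde K$ then contains a neighborhood of $0$, so $\tilde K$ is open. Being a subgroup with open complement (a union of cosets), it is also closed. Once this is established, everything else follows from density of $\Gamma$ in $b\Gamma$.

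Let $n=[b\Gamma:\tilde K]$. For (ii), take a coset $x+\tilde K$. It is nonempty and open, so by density it meets $\Gamma$, say at $\gamma$. Then
\[
(x+\tilde K)\cap\Gamma=(\gamma+\tilde K)\cap\Gamma=\gamma+(\tilde K\cap\Gamma)=\gamma+K,
\]
using $\gamma\in\Gamma$. Hence every coset of $\tilde K$ meets $\Gamma$ in exactly one coset of $K$.

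For (iv), the map $\rho$ is well defined, because $\gamma-\gamma'\in K\subseteq\tilde K$. It is a homomorphism, and it is injective because $\gamma\in\tilde K\cap\Gamma=K$ whenever $\rho(\gamma+K)=\tilde K$. Surjectivity is exactly the density argument from (ii), which also identifies the inverse. Statement (i) follows, since $\rho$ is a bijection and therefore $[\Gamma:K]=n$.

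For (iii), the inclusion $C+K\subseteq(C+\tilde K)\cap\Gamma$ is clear. Conversely, if $\gamma=c+\tilde k\in\Gamma$ with $c\in C\subseteq\Gamma$, then $\tilde k=\gamma-c\in\Gamma\cap\tilde K=K$.

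For (v), $C+\tilde K$ is a union of some $r$ cosets of $\tilde K$, so $\mu_{b\Gamma}(C+\tilde K)=r/n$. By (ii), $(C+\tilde K)\cap\Gamma$ is a union of exactly $r$ distinct cosets of $K$; they are distinct by the injectivity of $\rho^{-1}$. Lemma \ref{lem:FiniteIndexMean} therefore gives $\eta((C+\tilde K)\cap\Gamma)=r/[\Gamma:K]=r/n$, using (i).

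The only nontrivial step is the openness of $\tilde K$, which is Steinhaus's theorem for compact groups. If one prefers to avoid it, one can argue instead that a measurable homomorphism $b\Gamma\to b\Gamma/\tilde K$ into a finite group is continuous. Either route is standard, and the rest is bookkeeping.
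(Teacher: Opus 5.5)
Your proof is correct and takes the route the paper intends. The paper gives no argument here and defers to Lemma 11.1 of \cite{Griesmer_DiscreteSumsets}; per its footnote, the key input there is that a finite-index subgroup of $b\Gamma$ is clopen, and density of $\Gamma$ in $b\Gamma$ then gives (i)--(v) exactly as you argue. Your explicit use of measurability (via Steinhaus) to get clopenness is the right step, since a non-measurable finite-index subgroup of a compact group need not be open.
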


See \cite[Lemma 11.1]{Griesmer_DiscreteSumsets} for a proof. In the sequel we use the well-known\footnote{See the proof of Lemma 11.1 (i) in \cite{Griesmer_DiscreteSumsets} for a short explanation.} fact that a subgroup of a compact abelian group has finite index if and only if it is clopen.

\begin{corollary}\label{cor:MassInFiniteIndex}
	Let $m\in \mathcal M_{\tau}(\Gamma)$, let $\tilde{K}\leq b\Gamma$ be a $\mu_{b\Gamma}$-measurable finite index subgroup, and let $K=\tilde{K}\cap b\Gamma$.   Let $\tilde{K}_{i}$,  $i=1,\dots,r$ enumerate the cosets of $\tilde{K}$, so that $K_{i}:=\tilde{K}_{i}\cap \Gamma$ enumerates the cosets of $K$.  Let $A\subseteq \Gamma$ with $m(A)>0$ and let $\gamma\in \Gamma$.  Then:
	\begin{enumerate} 
		\item[(i)]  $\mu_{b\Gamma}(\tilde{A}_{m}\cap \tilde{K}_{i})>0$ if and only if $m(A\cap K_{i})>0$;
		\item[(ii)] setting $A':=\bigcup \{A\cap K_i : i\leq r,\, m(A\cap K_i)>0\}$, we have		
		\begin{align} 
			\label{eq:nuA'nuA}	m(A')&=m(A) \text{ and } A'\sim_{m} A;\\
			\label{eq:A'KA'tildeK} A' + K &= (A'+\tilde{K})\cap \Gamma;\\
			\label{eq:nuA'KmuA'}		 m(A'+K) &=\mu_{b\Gamma}(A'+\tilde{K}).
		\end{align}
	\end{enumerate}
\end{corollary}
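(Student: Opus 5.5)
The statement to prove is Corollary \ref{cor:MassInFiniteIndex}; it follows by combining Theorem \ref{th:Model} with Lemma \ref{lem:FiniteIndexCorrespondence}. Here $\tilde{A}_{m}$ is the model of $A$ from Theorem \ref{th:Model}, taking the invariant mean $m$ in the role of the mean used for the second summand. We read $K$ as $\tilde{K}\cap\Gamma$. Since $\tilde{K}$ is a finite index subgroup of the compact group $b\Gamma$, it is clopen, and so is each coset $\tilde{K}_{i}$. By Lemma \ref{lem:FiniteIndexCorrespondence}(ii) and (iv), the sets $K_{i}=\tilde{K}_{i}\cap\Gamma$ are exactly the cosets of $K$, each listed once.

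\textbf{Part (i).} Apply Theorem \ref{th:Model}\ref{item:VbGammaClopen} with the clopen set $V=\tilde{K}_{i}$. It gives $\mu_{b\Gamma}(\tilde{A}_{m}\cap\tilde{K}_{i})=0$ if and only if $m(A\cap\tilde{K}_{i})=0$. Here $A\cap\tilde{K}_{i}$ means $A\cap\tilde{K}_{i}\cap\Gamma=A\cap K_{i}$, since $\Gamma$ is identified with a subgroup of $b\Gamma$. Negating both sides gives (i).

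\textbf{Part (ii), equation (\ref{eq:nuA'nuA}).} The sets $A\cap K_{i}$ are pairwise disjoint and their union is $A$. The set $A\setminus A'$ is the union of those $A\cap K_{i}$ with $m(A\cap K_{i})=0$. There are only finitely many such pieces, so finite additivity gives $m(A\setminus A')=0$. Since $A'\subseteq A$, this says $A\triangle A'=A\setminus A'$ is $m$-null, i.e.\ $A'\sim_{m}A$, and hence $m(A')=m(A)$.

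\textbf{Part (ii), equations (\ref{eq:A'KA'tildeK}) and (\ref{eq:nuA'KmuA'}).} Equation (\ref{eq:A'KA'tildeK}) is Lemma \ref{lem:FiniteIndexCorrespondence}(iii) applied with $C=A'$. For (\ref{eq:nuA'KmuA'}), combine (\ref{eq:A'KA'tildeK}) with Lemma \ref{lem:FiniteIndexCorrespondence}(v) applied with $C=A'\subseteq b\Gamma$ and $\eta=m$:
\[
m(A'+K)=m\bigl((A'+\tilde{K})\cap\Gamma\bigr)=\mu_{b\Gamma}(A'+\tilde{K}).
\]
No single step is a real obstacle. The only points needing care are the identification of $A\cap\tilde{K}_{i}$ with $A\cap K_{i}$ inside $\Gamma$, and the use of finiteness of the index $r$ to get $A'\sim_{m}A$.
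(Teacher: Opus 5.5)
Your proposal is correct and follows the paper's own argument. Part (i) comes from Theorem~\ref{th:Model} applied to the clopen cosets $\tilde{K}_i$; the paper takes one direction from item~\ref{item:AVBV} and the other from item~\ref{item:VbGammaClopen}, whereas you take both from item~\ref{item:VbGammaClopen}, a negligible difference. Equation (\ref{eq:nuA'nuA}) comes from finiteness of the index, and (\ref{eq:A'KA'tildeK})--(\ref{eq:nuA'KmuA'}) come from Lemma~\ref{lem:FiniteIndexCorrespondence}(iii) and (v).
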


\begin{proof}
	(i)	The inequality $\mu_{b\Gamma}(\tilde{A}_m \cap \tilde{K}_i)\geq m(A\cap K_i)$ follows from Theorem \ref{th:Model}, since $A\cap \tilde{K}_i=A\cap K_i$.  Thus it suffices to prove that if $m(A\cap K_i)=0$, then $\mu_{b\Gamma}(\tilde{A}_m \cap K_i)=0$.  Since $\tilde{K}$ is clopen in $b\Gamma$, this follows from Theorem \ref{th:Model} \ref{item:VbGammaClopen}.

	 (\ref{eq:nuA'nuA}) follows from the definition of $A'$ and the fact that $K$ has finite index in $\Gamma$.
	
	 (\ref{eq:A'KA'tildeK}) and (\ref{eq:nuA'KmuA'}) follow from Lemma \ref{lem:FiniteIndexCorrespondence}.
\end{proof}

\section{Proof of Theorem \ref{th:TwoMeansStrict}}\label{sec:UBDproof}

In the next subsection we prove some technical lemmas.  In \S\ref{sec:TwoExtreme} we prove Theorem \ref{th:TwoMeansStrict}.

\subsection{Consequences of Theorem \ref{th:SummenmengenSatz1}}

Recall Theorem \ref{th:SummenmengenSatz1}: if $G$ is an LCA group with Haar measure $\mu$ and $A, B\subseteq G$ are nonempty $\mu$-measurable sets satisfying $\mu_{*}(A+B)<\mu(A)+\mu(B)$, then $K:=H(A+B)$ is compact, open, and satisfies
\[\mu(A+B)=\mu(A+K)+\mu(B+K)-\mu(K).\]

\begin{lemma}\label{lem:Push}
	Let $G$ be a compact abelian group with Haar measure $\mu$, and let $A$, $B\subseteq G$ satisfy $\mu_{*}(A+B)<\mu(A)+\mu(B)$.  Let $H:=H(A+B)$ and let $g\in G$.  Then
	\begin{enumerate}
\item[(a)]  $\mu(A\cap (g+H))>0$ if and only if $A\cap (g+H)\neq \varnothing$.  Likewise $\mu(B\cap (g+H))>0$ if and only if $B\cap (g+H)\neq \varnothing$.

\item[(b)]	If $A+g\subseteq A+B$, then $g \in B+H$.  Likewise if $g + B\subseteq A+B$, then $g \in A+H$.	
\end{enumerate}
\end{lemma}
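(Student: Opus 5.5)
The plan is to get both parts from the equation of Theorem \ref{th:SummenmengenSatz1}, implicitly taking $A$, $B$ nonempty and $\mu$-measurable as that theorem requires. Theorem \ref{th:SummenmengenSatz1} gives that $H=H(A+B)$ is compact and open, hence of finite index with $\mu(H)>0$, and that
\[\mu(A+B)=\mu(A+H)+\mu(B+H)-\mu(H).\]
Since $A+B=A+B+H$ is a finite union of cosets of $H$, it is measurable, and $\mu_{*}(A+B)=\mu(A+B)$. Combining this with the hypothesis $\mu(A+B)<\mu(A)+\mu(B)$ yields the key \emph{deficit inequality}
\[\bigl(\mu(A+H)-\mu(A)\bigr)+\bigl(\mu(B+H)-\mu(B)\bigr)<\mu(H).\]
Both bracketed terms are nonnegative.

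For (a), one direction is trivial: if $\mu(A\cap(g+H))>0$ then $A\cap(g+H)\neq\varnothing$. For the converse, suppose $A\cap(g+H)\neq\varnothing$ but $\mu(A\cap(g+H))=0$. The set $A\cap(g+H)$ is measurable because $g+H$ is clopen. Then $g+H\subseteq A+H$, while $A$ has null intersection with that coset, so $\mu(A+H)-\mu(A)\geq \mu(H)$. This contradicts the deficit inequality, since the $B$-term is nonnegative. The statement for $B$ is symmetric.

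For (b), suppose $A+g\subseteq A+B$ but $g\notin B+H$. Set $A'':=A+H$ and $B'':=B+H+\{0,g\}=(B+H)\cup(g+H)$. Both are finite unions of cosets of $H$, and $B''$ is a disjoint union, so
\[\mu(B'')=\mu(B+H)+\mu(H).\]
Because $A+B$ is $H$-periodic, we have $A+g+H\subseteq A+B$, and therefore $A''+B''=(A+B+H)\cup(A+g+H)=A+B$. Hence
\[\mu(A''+B'')=\mu(A+H)+\mu(B+H)-\mu(H)<\mu(A'')+\mu(B'').\]
Now apply Theorem \ref{th:SummenmengenSatz1} to the pair $A'',B''$. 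Its sumset is $A+B$, so its stabilizer is again $H$, and the theorem gives
\[\mu(A+B)=\mu(A''+H)+\mu(B''+H)-\mu(H)=\mu(A+H)+\mu(B+H).\]
This contradicts the Kneser equation for $A$, $B$ because $\mu(H)>0$. The symmetric case $g+B\subseteq A+B$ is handled identically with the roles of $A$ and $B$ swapped.

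The main point needing care is (b): one must check that enlarging $B$ to $B''$ does not change the sumset, which is exactly where the $H$-periodicity of $A+B$ and the hypothesis $A+g\subseteq A+B$ enter. One must also check that the sets involved are measurable; this holds because they are unions of cosets of the clopen subgroup $H$. Part (a) is a direct counting consequence of the deficit inequality.
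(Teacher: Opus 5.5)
Your proposal is correct and follows essentially the paper's route. For (b), the paper likewise adjoins $g$ to $B$, though it uses $B\cup\{g\}$ directly rather than your $H$-periodic hulls $A+H$ and $(B+H)\cup(g+H)$; it notes the sumset, and hence the stabilizer, is unchanged, reapplies Theorem \ref{th:SummenmengenSatz1}, and contradicts the hypothesis rather than the Kneser equation. For (a), the paper only cites an external lemma whose proof is the same coset-counting against the Kneser equation (as in Lemma \ref{lem:Jin}) that your deficit inequality packages.
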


\begin{proof}
(a) See \cite[Lemma 9.6]{Griesmer_Semicontinuity} for a proof, which  is very similar to the proof of Lemma \ref{lem:Jin} below.

(b) Assume, to get a contradiction, that the hypotheses hold, $A+g\subseteq A+B$, and $g\notin B+H$.  Let $B':=B\cup\{g\}$.  Then 
	\begin{equation}\label{eq:B'H}
	\mu(B'+H)=\mu(B+H)+\mu(H),
	\end{equation}
	since $g+H\subseteq B'+H$ and $g+H$ is disjoint from $B+H$.
	
	Since $A+B\subseteq A+B'$ and $A+B'\subseteq A+B$, we have $A+B=A+B'$.  It follows that $H(A+B')=H(A+B)=H$.  	We also have $\mu(B')\geq \mu(B)$, so \begin{equation}\label{eq:muAB'}
		\mu(A+B')< \mu(A)+\mu(B').
	\end{equation}  Theorem \ref{th:SummenmengenSatz1} then implies
	\begin{align*}
		\mu(A+B')&=\mu(A+H)+\mu(B'+H)-\mu(H)\\
		&=\mu(A+H)+\mu(B+H)+\mu(H)-\mu(H) && \text{by (\ref{eq:B'H})}\\
		&=\mu(A+H)+\mu(B'+H)\\
		&\geq \mu(A)+\mu(B).
	\end{align*} Combining the above with (\ref{eq:muAB'}), we get $\mu(A+B)=\mu(A+B')\geq \mu(A)+\mu(B)$, contradicting the hypothesis that $\mu_{*}(A+B)<\mu(A)+\mu(B)$.
\end{proof}

The following is essentially Lemma 2.5 in \cite{Jin_CharacterizingStructure}.  Note that we do not assume $m$ is extreme.

\begin{lemma}\label{lem:Jin}  Let $\Gamma$ be a discrete abelian group.  Suppose $\nu\in \mathcal M_{\tau}^{ext}(\Gamma)$, $m\in \mathcal M_{\tau}(\Gamma)$, and $A$, $B\subseteq \Gamma$ are nonempty.  Let $\tilde{A}_{\nu}$, $\tilde{B}_{m}\subseteq b\Gamma$ be as in Theorem \ref{th:Model}, and assume  \[\mu_{b\Gamma}(\tilde{A}_\nu+\tilde{B}_m)<\mu_{b\Gamma}(\tilde{A}_\nu)+\mu_{b\Gamma}(\tilde{B}_m),\]  so that $\tilde{H}:=H(\tilde{A}_\nu+\tilde{B}_m)$ is $\mu_{b\Gamma}$-measurable and finite index in $b\Gamma$ by Theorem \ref{th:Satz1Compact}. Let $\tilde{H}_1,\dots,\tilde{H}_d$ enumerate the cosets of $\tilde{H}$, and fix some $j\leq d$ with $\nu(A\cap \tilde{H}_j)>0$.  Then
	\begin{equation}\label{eq:JinLemma}\nu(A\cap \tilde{H}_j)+\mu_{b\Gamma}(\tilde{A}_\nu+\tilde{B}_m)\geq \nu(A)+m(B).
	\end{equation}
\end{lemma}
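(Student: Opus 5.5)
We prove Lemma \ref{lem:Jin} by shrinking $A$ to a single coset and reapplying Theorem \ref{th:Model}. Let $A_j := A\cap \tilde{H}_j$; this is nonempty since $\nu(A_j)>0$. Apply Theorem \ref{th:Model} to the pair $A_j$, $B$ with the same means $\nu$, $m$. This produces $F_\sigma$ models $(\tilde{A_j})_\nu$ and $\tilde{B}_m$ with $\mu_{b\Gamma}((\tilde{A_j})_\nu)\ge \nu(A_j)$, $\mu_{b\Gamma}(\tilde{B}_m)\ge m(B)$, and $\nu(A_j+B)\ge \mu_{b\Gamma}((\tilde{A_j})_\nu+\tilde{B}_m)$. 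The model of $B$ depends only on $B$ and $m$, so we may take it to be the same $\tilde B_m$ as before; if it is not literally the same, we only use its measure bound. We also know $(\tilde{A_j})_\nu\cap(b\Gamma\setminus\tilde H_j)$ is null, by item \ref{item:VbGammaClopen} applied to the clopen complement of $\tilde H_j$.

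The strategy is a counting argument in the quotient $b\Gamma/\tilde H$, which is finite. Write $S:=\tilde A_\nu+\tilde B_m$. Then $S$ is a union of $\tilde H$-cosets, say $s$ of them, so $\mu_{b\Gamma}(S)=s\,\mu(\tilde H)$. Let $a$ and $b$ be the numbers of $\tilde H$-cosets meeting $\tilde A_\nu$ and $\tilde B_m$ respectively. Lemma \ref{lem:Push}(a) says each such coset meets $\tilde A_\nu$ (or $\tilde B_m$) in positive measure. Kneser's equation (\ref{eq:KneserCritical}) in $b\Gamma$ gives $s=a+b-1$. Hence
\[
\mu_{b\Gamma}(S)=\mu(\tilde A_\nu+\tilde H)-\mu(\tilde H)+\mu(\tilde B_m+\tilde H)\ge \mu(\tilde A_\nu+\tilde H)-\mu(\tilde H)+m(B).
\]
It therefore suffices to show
\[
\nu(A)-\nu(A\cap\tilde H_j)\le \mu(\tilde A_\nu+\tilde H)-\mu(\tilde H),
\]
that is, $\nu(A\setminus \tilde H_j)\le (a-1)\mu(\tilde H)$.

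This bound holds because $A\setminus\tilde H_j$ lies, up to a $\nu$-null set, in the union of those cosets $\tilde H_i$ with $i\ne j$ and $\nu(A\cap\tilde H_i)>0$. By Corollary \ref{cor:MassInFiniteIndex}(i), these are exactly the cosets $i\neq j$ with $\mu(\tilde A_\nu\cap\tilde H_i)>0$, and by Lemma \ref{lem:Push}(a) these are the cosets meeting $\tilde A_\nu$, other than $\tilde H_j$. There are therefore at most $a-1$ of them. Each coset $\tilde H_i\cap\Gamma$ has $\nu$-measure $\mu(\tilde H)$ by Lemma \ref{lem:FiniteIndexCorrespondence}(v), so $\nu(A\setminus\tilde H_j)\le (a-1)\mu(\tilde H)$. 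Combining the two displays gives (\ref{eq:JinLemma}).

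The main obstacle is the identification of the cosets: the cosets where $A$ has positive $\nu$-mass must match the cosets meeting $\tilde A_\nu$, and $\tilde H_j$ itself must be one of them. Both facts follow from Corollary \ref{cor:MassInFiniteIndex}(i) together with Lemma \ref{lem:Push}(a), which requires the hypothesis $\mu(S)<\mu(\tilde A_\nu)+\mu(\tilde B_m)$ (the model sets are measurable, so $\mu_*=\mu$ here). With these in hand, the shrinking step via $A_j$ is in fact unnecessary; the argument is pure coset counting, and the $A_j$ construction can be dropped.
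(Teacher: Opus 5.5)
Your final coset-counting argument is correct and is essentially the paper's proof. The paper also bounds $\nu(A)\le \nu(A\cap\tilde H_j)+(|I|-1)\mu(\tilde H)$ with $I=\{i:\nu(A\cap\tilde H_i)>0\}$, matches $|I|$ against the number of $\tilde H$-cosets meeting $\tilde A_\nu$, and then applies Kneser's equation together with $\mu(\tilde B_m+\tilde H)\ge\mu(\tilde B_m)\ge m(B)$. As you note, the opening detour through a fresh model of $A\cap\tilde H_j$ should be deleted, along with its unsupported claim that $\tilde B_m$ can be kept fixed. Also, only $|I|\le a$ is needed, and this already follows from Theorem \ref{th:Model} \ref{item:AVBV} applied to the compact cosets $\tilde H_i$, without Lemma \ref{lem:Push}(a).
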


\begin{proof} Write $H$ for $\tilde{H}\cap \Gamma$ and $A_i$ for $A\cap \tilde{H}_i$. For the remainder of the proof we write $\mu$, $\tilde{A}$, and $\tilde{B}$ for $\mu_{b\Gamma}$, $\tilde{A}_{\nu}$, and $\tilde{B}_{m}$, respectively.  Our assumption and Theorem \ref{th:SummenmengenSatz1} imply
	\begin{equation}\label{eq:KneserForTransfer}
		\mu(\tilde{A}+\tilde{B})=\mu(\tilde{A}+\tilde{H})+\mu(\tilde{B}+\tilde{H})-\mu(\tilde{H}).
	\end{equation} 
	Let $I:=\{i:\nu(A\cap \tilde{H}_i)>0\}$, and let $A':=\bigcup_{i\in I} A_i$.  Then $\nu(A'+H)=|I|\nu(H)$, and
	\begin{align*}\nu(A)=\nu(A')&=\sum_{i\in I} \nu(A_i)\\
		&=\nu(A_j)+\sum_{i\in I, i\neq j} \nu(A_i)\\
		&\leq \nu(A_j) + \sum_{i\in I, i\neq j} \nu(H)\\
		&= \nu(A_j) + \nu(A'+H)-\nu(H)
	\end{align*}
	Subtracting $\nu(A'+H)-\nu(H)$ from the first and last lines above we get
	\begin{equation}\label{eq:SubtractFirstLast}
		\nu(A_j)\geq \nu(A) -(\nu(A'+H)-\nu(H)).
	\end{equation}
	Since $\nu$ and $m$ are invariant means, Lemmas \ref{lem:FiniteIndexCorrespondence} and \ref{lem:FiniteIndexMean} imply $\nu(H)=\mu(\tilde{H})$, while Theorem \ref{th:Model} says that $\mu(\tilde{A})\geq \nu(A)$ and $\mu(\tilde{B})\geq m(B)$.
	
	Adding $\mu(\tilde{A}+\tilde{B})$ to both sides of (\ref{eq:SubtractFirstLast}), we have
	\begin{align*} \nu(A_j)&+\mu(\tilde{A}+\tilde{B}) \\ &\geq \nu(A) -(\nu(A'+H)-\nu(H)) + \mu(\tilde{A}+\tilde{B})\\
		&= \nu(A) - (\nu(A'+H)-\nu(H)) + \mu(\tilde{A}+\tilde{H})+\mu(\tilde{B}+\tilde{H})-\mu(\tilde{H})&& \text{by (\ref{eq:KneserForTransfer})}\\
		&= \nu(A) + \mu(\tilde{B}+\tilde{H}) +  \mu(\tilde{A}+\tilde{H})  -\nu(A'+H) +\nu(H) -\mu(\tilde{H}) \\
		&= \nu(A) + \mu(\tilde{B}+\tilde{H}) && \text{by Corollary \ref{cor:MassInFiniteIndex}}\\
		&\geq \nu(A) + \mu(\tilde{B})\\
		&\geq \nu(A)+m(B),
	\end{align*}
	as desired.  \end{proof}

Most of the proof of Theorem \ref{th:TwoMeansStrict} is contained in the next lemma.  Note that we only assume $\nu$ is extreme here.
\begin{lemma}\label{lem:HalfCritical}
	Let $\Gamma$ be a discrete abelian group, $\nu\in \mathcal M_{\tau}^{ext}(\Gamma)$, $m\in \mathcal M_{\tau}(\Gamma)$, and $A$, $B\subseteq \Gamma$.  Assume  $\nu(A+B)<\nu(A)+m(B)$.
	Then
	\begin{equation}\label{eq:ModelInequality}
		\mu_{b\Gamma}(\tilde{A}_{\nu}+\tilde{B}_{m})<\mu_{b\Gamma}(\tilde{A}_{\nu})+\mu_{b\Gamma}(\tilde{B}_{m}),
	\end{equation}
	$\tilde{H}:=H(\tilde{A}_{\nu}+\tilde{B}_{m})$ has finite index in $b\Gamma$ and satisfies 
	\begin{align}\label{eq:nuContain}
		(\tilde{A}_{\nu}+\tilde{B}_{m}+\tilde{H})\cap \Gamma &\subset_{\nu} A+B, \quad \text{and}\\
	 \label{eq:BtildeB} B&\subseteq \tilde{B}_{m}+\tilde{H}.
\end{align}
\end{lemma}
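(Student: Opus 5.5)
The strict inequality \eqref{eq:ModelInequality} comes directly from Theorem \ref{th:Model}. Its main inequality gives $\mu_{b\Gamma}(\tilde{A}_{\nu}+\tilde{B}_{m})\leq \nu(A+B)$. By hypothesis $\nu(A+B)<\nu(A)+m(B)$, and Theorem \ref{th:Model} also gives $\nu(A)+m(B)\leq \mu_{b\Gamma}(\tilde{A}_{\nu})+\mu_{b\Gamma}(\tilde{B}_{m})$. Chaining these yields \eqref{eq:ModelInequality}. The sum $\tilde{A}_{\nu}+\tilde{B}_{m}$ is an $F_{\sigma}$-set, so it is measurable and inner measure equals measure. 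Theorem \ref{th:Satz1Compact} (Kneser) then applies: $\tilde{H}$ is compact and open, hence of finite index, and $\tilde{A}_{\nu}+\tilde{B}_{m}+\tilde{H}=\tilde{A}_{\nu}+\tilde{B}_{m}$.

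For \eqref{eq:nuContain}, note first that $\tilde{A}_{\nu}+\tilde{B}_{m}+\tilde{H}=\tilde{A}_{\nu}+\tilde{B}_{m}$. The set $\tilde{A}_{\nu}+\tilde{B}_{m}$ is a union of cosets of the clopen finite index subgroup $\tilde{H}$, and there are only finitely many such cosets. Each coset $V$ is compact and contained in $\tilde{A}_{\nu}+\tilde{B}_{m}$, so Theorem \ref{th:Model}\ref{item:VcapGamma} gives $V\cap\Gamma\subset_{\nu}A+B$. Taking the finite union of these $\nu$-null exceptions gives \eqref{eq:nuContain}.

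The main step is \eqref{eq:BtildeB}. Fix $b\in B$; I plan to show $b\in \tilde{B}_{m}+\tilde{H}$ using Lemma \ref{lem:Push}(b) applied in $G=b\Gamma$ with the pair $\tilde{A}_{\nu},\tilde{B}_{m}$. That lemma needs $\tilde{A}_{\nu}+b\subseteq \tilde{A}_{\nu}+\tilde{B}_{m}$, which I cannot verify directly because $\tilde{A}_{\nu}$ is only a model. I will pass to a modified set instead. Let $\tilde{A}'$ be the union of those cosets $\tilde{H}_j$ with $\mu(\tilde{A}_{\nu}\cap\tilde{H}_j)>0$. By Lemma \ref{lem:Push}(a) these are exactly the cosets that $\tilde{A}_{\nu}$ meets, so $\tilde{A}_{\nu}+\tilde{H}=\tilde{A}'$.

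It then suffices to show $\tilde{A}'+b\subseteq\tilde{A}_{\nu}+\tilde{B}_{m}$, i.e.\ that for each $j$ with $\tilde{H}_j\subseteq\tilde{A}'$ the coset $\tilde{H}_j+b$ lies in $\tilde{A}_{\nu}+\tilde{B}_{m}$. Given such a $j$, Corollary \ref{cor:MassInFiniteIndex}(i) gives $\nu(A\cap\tilde{H}_j)>0$. Since $A+b\subseteq A+B$, we get $\nu((A+B)\cap(\tilde{H}_j+b))\geq\nu(A\cap\tilde{H}_j)>0$.

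Suppose, for contradiction, that the coset $\tilde{H}_j+b$ is disjoint from $\tilde{A}_{\nu}+\tilde{B}_{m}$. Here I would use Lemma \ref{lem:Jin} to show that $A+B$ is essentially contained in $(\tilde{A}_{\nu}+\tilde{B}_{m})\cap\Gamma$ up to small error. Combining \eqref{eq:nuContain} with $\nu(A+B)\ge\mu(\tilde{A}_{\nu}+\tilde{B}_{m})$, the extra mass $\nu(A\cap\tilde{H}_j)$ in the coset $\tilde{H}_j+b$ would give
\[
\nu(A+B)\geq \mu(\tilde{A}_{\nu}+\tilde{B}_{m})+\nu(A\cap\tilde{H}_j).
\]
Lemma \ref{lem:Jin} says the right-hand side is at least $\nu(A)+m(B)$. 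This contradicts the hypothesis $\nu(A+B)<\nu(A)+m(B)$.

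So $\tilde{A}'+b\subseteq\tilde{A}_{\nu}+\tilde{B}_{m}$. The pair $(\tilde{A}',\tilde{B}_{m})$ still satisfies Kneser's strict inequality with the same stabilizer $\tilde{H}$, so Lemma \ref{lem:Push}(b) applies and gives $b\in\tilde{B}_{m}+\tilde{H}$. The delicate point is the disjointness/measure bookkeeping in the contradiction step, and this is where I expect the work to lie.
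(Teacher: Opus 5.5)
Your proposal is correct and follows essentially the paper's argument: both use Lemma \ref{lem:Push}(a) with Corollary \ref{cor:MassInFiniteIndex}(i) to get $\nu(A\cap\tilde H_j)>0$ on every coset met by $\tilde A_\nu$, and then combine Lemma \ref{lem:Jin}, \eqref{eq:nuContain}, and Lemma \ref{lem:Push}(b). The paper phrases this as a contrapositive instead of verifying $\tilde A'+b\subseteq\tilde A_\nu+\tilde B_m$ directly, and your passage to $\tilde A'$ is unnecessary since $\tilde A_\nu+b\subseteq\tilde A'+b$. In the bookkeeping step the ingredient you need is not $\nu(A+B)\geq\mu_{b\Gamma}(\tilde A_\nu+\tilde B_m)$ but $\nu\bigl((\tilde A_\nu+\tilde B_m)\cap\Gamma\bigr)=\mu_{b\Gamma}(\tilde A_\nu+\tilde B_m)$ from Lemma \ref{lem:FiniteIndexCorrespondence}(v); together with \eqref{eq:nuContain} and the disjointness of $\tilde H_j+b$, this yields your displayed inequality.
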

\begin{proof}  Assuming $\nu(A+B)<\nu(A)+m(B)$, Theorem \ref{th:Model} implies \[\mu_{b\Gamma}(\tilde{A}_{\nu}+\tilde{B}_{m})\leq \nu(A+B)<\nu(A)+m(B)\leq \mu_{b\Gamma}(\tilde{A}_{\nu})+\mu_{b\Gamma}(\tilde{B}_{m}).\]
	This confirms (\ref{eq:ModelInequality}).  Theorem \ref{th:Satz1Compact} now implies $\tilde{H}$ is compact, open, has finite index in $b\Gamma$, and satisfies $\tilde{A}_{\nu}+\tilde{B}_{m}=\tilde{A}_{\nu}+\tilde{B}_{m}+\tilde{H}$.  Thus $\tilde{A}_{\nu}+\tilde{B}_{m}$ is clopen, so Theorem \ref{th:Model} \ref{item:VcapGamma} implies (\ref{eq:nuContain}).
	
	Lemma \ref{lem:Push} (a) and Corollary \ref{cor:MassInFiniteIndex}  (i) then imply
	\begin{equation}\label{eq:TooManyIFFs}
		\tilde{A}_{\nu}\cap (\gamma+\tilde{H})\neq \varnothing \iff \mu_{b\Gamma}(\tilde{A}_{\nu}\cap (\gamma+\tilde{H}))>0 \iff \nu(A\cap (\gamma+\tilde{H}))>0.
	\end{equation}
	
	To prove (\ref{eq:BtildeB}), we assume, to get a contradiction, that $b_{0}\in B$ and $b_{0}\notin \tilde{B}_{m}+\tilde{H}$.  Applying the contrapositive of Lemma \ref{lem:Push} (b) with $\tilde{A}_{\nu}$, $\tilde{B}_{m}$, and $\tilde{H}$ in place of $A$, $B$, and $H$, we see that $\tilde{A}_{\nu}+b_{0}$ is not contained in $\tilde{A}_{\nu}+\tilde{B}_{m}+\tilde{H}$.  So we may fix $a_{0}\in A$ so that $\tilde{A}_{\nu}\cap (a_{0}+\tilde H)$ is nonempty, while
		\begin{equation}
		C_{0}:=(\tilde{A}_{\nu}\cap (a_{0}+\tilde H)) + b_{0}
	\end{equation}
is disjoint from $\tilde{A}_{\nu}+\tilde{B}_{m}+\tilde{H}$.  Then (\ref{eq:TooManyIFFs}) implies $\nu(A\cap (a_{0}+\tilde{H}))>0$.   Lemma \ref{lem:Jin} now implies 
		 \begin{equation}\label{eq:nuAcapa0} \nu(C_{0})+\mu_{b\Gamma}(\tilde{A}_{\nu}+\tilde{B}_{m})\geq \nu(A)+m(B).
		 \end{equation}  Together with (\ref{eq:nuContain}) and the disjointness of $C_{0}$ from $\tilde{A}_{\nu}+\tilde{B}_{m}+\tilde{H}$, (\ref{eq:nuAcapa0}) implies $\nu(A+B)\geq \nu(A)+m(B)$, contradicting the hypothesis $\nu(A+B)<\nu(A)+m(B)$.
\end{proof}

\subsection{Two extreme invariant means}\label{sec:TwoExtreme}

\begin{proposition}\label{prop:TwoMeansTechnical}
	Let $\Gamma$ be a discrete abelian group, $\nu$, $\eta\in \mathcal M_{\tau}^{ext}(\Gamma)$ and $A$, $B\subseteq \Gamma$. Let $\tilde{A}_{\nu}$, $\tilde{B}_{m}\subseteq b\Gamma$ be as in Theorem \ref{th:Model}.  Assume \[\max\{\nu(A+B),\eta(A+B)\} <\nu(A)+\eta(B),\]  define
	$\tilde{C}:=\tilde{A}_\nu+\tilde{B}_\eta$, $C:=\tilde{C}\cap \Gamma$,
	$\tilde{H}:=H(\tilde{C})$
	and let $H:=\tilde{H}\cap \Gamma$.  Then $H$ has finite index in $\Gamma$, and the following hold:
	\begin{enumerate}
		\item\label{item:CisAplusB} $C=A+B+H$.
		\item\label{item:StrictSim} $A+B\sim_{\nu,\eta} A+B+H$.	
		\item\label{item:HisH} $H$ is the stabilizer of $A+B+H$.	
		\item\label{item:Kneser} $\nu(A+B+H)=\nu(A+H)+\nu(B+H)-\nu(H)$.
	\end{enumerate}
\end{proposition}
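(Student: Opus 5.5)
The plan is to apply Lemma \ref{lem:HalfCritical} twice, once with $\nu$ as the extreme mean and once with $\eta$. This yields both containments $A\subseteq \tilde A_\nu+\tilde H$ and $B\subseteq \tilde B_\eta+\tilde H$, and both approximate containments of $C$ in $A+B$. Everything else follows from the coset correspondence of Lemma \ref{lem:FiniteIndexCorrespondence}.

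\emph{Step 1: the two applications.} First apply Lemma \ref{lem:HalfCritical} with $\nu$ as the extreme mean and $m:=\eta$. The hypothesis $\nu(A+B)<\nu(A)+\eta(B)$ holds, so we get $\mu_{b\Gamma}(\tilde C)<\mu_{b\Gamma}(\tilde A_\nu)+\mu_{b\Gamma}(\tilde B_\eta)$. We also get that $\tilde H$ has finite index, hence is clopen, so $H$ has finite index by Lemma \ref{lem:FiniteIndexCorrespondence}. Finally we get $\tilde C\cap\Gamma\subset_\nu A+B$ and $B\subseteq \tilde B_\eta+\tilde H$.

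Next apply the same lemma with the roles of the summands exchanged: the extreme mean is $\eta$, the second mean is $\nu$, and the pair is $(B,A)$. The hypothesis $\eta(B+A)<\eta(B)+\nu(A)$ holds, and this gives $C\subset_\eta A+B$ and $A\subseteq \tilde A_\nu+\tilde H$.

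\emph{Main obstacle.} This second application needs the model of $A$ produced by Theorem \ref{th:Model} for the extreme mean $\nu$ in the ``$\tilde B_m$'' slot to coincide with $\tilde A_\nu$. The notation (models indexed only by the set and the mean) suggests the construction of \cite{Griesmer_DiscreteSumsets} has this property. I expect checking this compatibility to be the main point requiring care. If it fails, I would instead rederive the $A$-containment by repeating the proof of (\ref{eq:BtildeB}) with Lemma \ref{lem:Push}(b) applied to the other summand, and using Lemma \ref{lem:Jin} with the roles of $\nu,\eta$ swapped.

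\emph{Step 2: conclusions (i) and (ii).} Since $\tilde C+\tilde H=\tilde C$, Step 1 gives $A+B\subseteq \tilde A_\nu+\tilde B_\eta+\tilde H=\tilde C$, so $A+B+H\subseteq C$. Conversely, $C$ is a union of cosets of $H$ by Lemma \ref{lem:FiniteIndexCorrespondence}(ii). Each such coset has $\nu$-measure $[\Gamma:H]^{-1}>0$, and $C\subset_\nu A+B$, so every coset of $H$ contained in $C$ meets $A+B$. Hence $C\subseteq A+B+H$, proving (i). Then $A+B\subseteq A+B+H=C\subset_{\nu}A+B$ and likewise $C\subset_\eta A+B$, which gives (ii).

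\emph{Step 3: conclusions (iii) and (iv).} For (iii), $H$ stabilizes $C=A+B+H$. Conversely, suppose $\gamma\in\Gamma$ stabilizes $C$. The bijection $\rho$ of Lemma \ref{lem:FiniteIndexCorrespondence}(iv) matches cosets of $H$ in $C$ with cosets of $\tilde H$ in $\tilde C$, so $\gamma$ permutes the $\tilde H$-cosets of $\tilde C$. Thus $\gamma+\tilde C=\tilde C$, so $\gamma\in\tilde H\cap\Gamma=H$.

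For (iv), Theorem \ref{th:Satz1Compact} gives
\[\mu_{b\Gamma}(\tilde C)=\mu_{b\Gamma}(\tilde A_\nu+\tilde H)+\mu_{b\Gamma}(\tilde B_\eta+\tilde H)-\mu_{b\Gamma}(\tilde H).\]
We transfer each term to $\Gamma$. First, $\mu_{b\Gamma}(\tilde C)=\nu(C)$ and $\mu_{b\Gamma}(\tilde H)=\nu(H)$ by Lemma \ref{lem:FiniteIndexCorrespondence}(v).

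For the $A$-term we check that the cosets of $\tilde H$ meeting $\tilde A_\nu$ are exactly those meeting $A$. If a coset meets $\tilde A_\nu$, then by (\ref{eq:TooManyIFFs}) (Lemma \ref{lem:Push}(a) together with Corollary \ref{cor:MassInFiniteIndex}(i)) $A$ has positive $\nu$-measure in it, so it meets $A$. Conversely, if a coset meets $A$, then $A\subseteq\tilde A_\nu+\tilde H$ forces it to meet $\tilde A_\nu$. Therefore $(\tilde A_\nu+\tilde H)\cap\Gamma=A+H$, and $\mu_{b\Gamma}(\tilde A_\nu+\tilde H)=\nu(A+H)$.

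The same argument with $\eta$ and $B$ gives $\mu_{b\Gamma}(\tilde B_\eta+\tilde H)=\eta(B+H)$. Since $B+H$ is a union of cosets of $H$, Lemma \ref{lem:FiniteIndexMean} gives $\eta(B+H)=\nu(B+H)$. Substituting all four identities into the displayed equation yields (iv).
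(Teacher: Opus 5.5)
Your proof is correct, and through (iii) it is the paper's argument. You use the same two applications of Lemma \ref{lem:HalfCritical}; the paper also makes the swap ``with the roles of $\nu$ and $\eta$ reversed'' without comment, so it assumes the model compatibility you flag. You also use the same sandwich $A+B+H\subseteq C\subset_{\nu,\eta}A+B$ and the same coset/density argument for the stabilizer. Your proof of (i), that each $H$-coset in $C$ has positive $\nu$-measure and hence meets $A+B$, only repackages the paper's $A+B\sim_{\nu}A+B+H\subseteq C$.

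Your fallback would not remove the dependence on that compatibility. Both the $\eta$-half of (ii) and the contradiction step of a swapped version of the proof of (\ref{eq:BtildeB}) need $C\subset_{\eta}A+B$. That is Theorem \ref{th:Model}\ref{item:VcapGamma} with $\eta$ as the extreme mean, for the same model sets.

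The genuine difference is in (iv). The paper invokes Kneser's theorem a second time, in the finite quotient $\Gamma/H$, via Lemma \ref{lem:FItoKneser} applied to $A+H$ and $B+H$. It checks $\nu(A+B+H)=\nu(A+B)<\nu(A)+\eta(B)\leq \nu(A+H)+\eta(B+H)=\nu(A+H)+\nu(B+H)$. It then uses (iii) to identify the stabilizer that lemma produces with $H$.

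You instead transfer the compact identity from Theorem \ref{th:Satz1Compact} for $\tilde A_{\nu},\tilde B_{\eta}$ term by term. To do so you first show $(\tilde A_{\nu}+\tilde H)\cap\Gamma=A+H$ and $(\tilde B_{\eta}+\tilde H)\cap\Gamma=B+H$, using Lemma \ref{lem:Push}(a), Corollary \ref{cor:MassInFiniteIndex}(i), and the containments from Step 1.

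Your route uses Kneser only once and does not depend on (iii). It also yields an extra structural fact: the models occupy exactly the $\tilde H$-cosets met by $A$ and $B$, a sharpened form of the bookkeeping inside Lemma \ref{lem:Jin}. The paper's route is more black-box: it needs only (ii), (iii) and a one-line inequality, and no coset-level information about $\tilde A_{\nu}$ and $\tilde B_{\eta}$.
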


\begin{proof}  Assume $A$, $B$, $\nu$, and $\eta$ are as in the hypothesis.  Since both $\nu, \eta\in \mathcal M_{\tau}^{ext}(\Gamma)$, we may apply Lemma \ref{lem:HalfCritical} to conclude that $B\subseteq \tilde{B}_{\eta}+\tilde{H}$.  Applying Lemma \ref{lem:HalfCritical} with the roles of $\nu$ and $\eta$ reversed, we get $A\subseteq \tilde{A}_{\nu}+\tilde{H}$.  Thus $A+B\subseteq \tilde{A}_{\nu}+\tilde{B}_{\eta}+\tilde{H}=\tilde{A}_{\nu}+\tilde{B}_{\eta}$, meaning $A+B\subseteq \tilde{C}$.  Since $A+B\subseteq \Gamma$, we have $A+B\subseteq C:=\tilde{C}\cap \Gamma$.
	
	The group $\tilde{H}$ has finite index in $b\Gamma$ by Lemma \ref{lem:HalfCritical}, so $H$ has finite index in $\Gamma$ by Lemma \ref{lem:FiniteIndexCorrespondence}.  
	
	We claim that $C+H=C$.  To see this, note that Lemma \ref{lem:FiniteIndexCorrespondence} implies $C+H=(C+\tilde{H})\cap \Gamma \subseteq (\tilde{C}+\tilde{H})\cap \Gamma = C$. Now Lemma \ref{lem:HalfCritical} and the above containments imply
\begin{equation}
	C:=(\tilde{A}_{\nu}+\tilde{B}_{\eta}+\tilde{H})\cap \Gamma\subset_{\nu,\eta} A+B\subseteq C.
\end{equation}
Thus $A+B\sim_{\nu,\eta} C$.  Furthermore $A+B+H\subseteq C+H=C$, so $A+B\sim_{\nu,\eta}A+B+H$. Conclusion \ref{item:CisAplusB} now follows from the containment $A+B+H\subseteq C$ and the similarity $A+B\sim_{\nu}A+B+H$.

Now we prove that $H=H(C)$.  The containment $H\subseteq H(C)$ follows from $C+H=C$ above, so we prove the reverse containment.   Assume $\gamma\in H(C)$.   Then 
\[\tilde{C}\cap \Gamma =C=C+\gamma=(\tilde{C}\cap \Gamma)+\gamma=(\tilde{C}+\gamma)\cap \Gamma.\]  Thus $\tilde{C}\cap \Gamma=(\tilde{C}+\gamma)\cap \Gamma$.  Since $\tilde{C}$ is a union of cosets of $\tilde{H}$, $\tilde{H}$ is clopen in $b\Gamma$, and $\Gamma$ is dense in $b\Gamma$, the latter equation implies $\tilde{C}=\tilde{C}+\gamma$.  It follows that $\gamma\in \tilde{H}$ and $\gamma\in \Gamma$, so $\gamma\in H$. We conclude that $H=H(C)$.  Together with \ref{item:CisAplusB}, this implies \ref{item:HisH}.  

To prove \ref{item:Kneser}, we will apply Lemma \ref{lem:FItoKneser}, with $\nu$ in place of $m$ and $A+H$ and $B+H$ in place of $A$ and $B$.  We first verify that $\nu(A+B+H)<\nu(A+H)+\nu(B+H)$.  To see this, note that Lemma \ref{lem:FiniteIndexMean} implies $\nu(A+H)=\eta(A+H)$, so \[\nu(A+B+H)=\nu(A+B)<\nu(A)+\eta(B)\leq \nu(A+H)+\eta(B+H)=\nu(A+H)+\nu(B+H).\]  Since $H$ is the stabilizer of $A+B+H$, Lemma \ref{lem:FItoKneser} yields \ref{item:Kneser}. \end{proof}

The next lemma will show that the subgroup $H$ in Proposition \ref{prop:TwoMeansTechnical} does not depend on the choice of $\nu$ and $\eta$.

\begin{lemma}\label{lem:TwoSubgroups}
Let $\Gamma$ be a discrete abelian group and $C\subseteq \Gamma$.	If $K$, $H\leq \Gamma$ are each finite index subgroups, $K$ is stabilizer of $C+K$, $H$ is the stabilizer of $C+H$, and $\dens^{*}(C)=\dens^{*}(C+H)=\dens^{*}(C+K)$, then $H=K$.
\end{lemma}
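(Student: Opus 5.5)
The plan is to pass to the finite quotient by $L:=H\cap K$, which has finite index in $\Gamma$, and compare the images of $C+H$, $C+K$ and $C+H+K$ there. Write $\phi:\Gamma\to\Gamma/L$ for the quotient map and $\tilde D:=\phi(D)$. The sets $C+H$, $C+K$ and $C+H+K=(C+H)+K$ are all unions of cosets of $L$. So by Lemma \ref{lem:FiniteIndexMean}, $\dens^{*}$ of each of them equals the cardinality of its image divided by $[\Gamma:L]$.

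The key step is to show that $C+H=C+K$. By definition of $\dens^{*}$ there is an invariant mean $m$ with $m(C)=\dens^{*}(C)$; by the lemma stated after Definition \ref{def:UBD}, $m$ can even be taken extreme, though any maximizing mean will do. Then
\[m(C)=\dens^{*}(C)=\dens^{*}(C+H)=m(C+H),\]
where the last equality uses Lemma \ref{lem:FiniteIndexMean}. Since $C\subseteq C+H$, this gives $C\sim_{m}C+H$, and in the same way $C\sim_m C+K$. Hence $m\big((C+H)\triangle(C+K)\big)=0$.

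The symmetric difference $(C+H)\triangle(C+K)$ is a union of cosets of $L$. Every coset of $L$ has $m$-measure $1/[\Gamma:L]>0$ by (\ref{eq:mFinIndex}), so a union of $L$-cosets with $m$-measure zero must be empty. Therefore $C+H=C+K$. I expect this to be the crux of the proof: passing from a measure-zero difference to exact equality is possible only because both sets are unions of cosets of a finite-index subgroup.

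Now set $D:=C+H=C+K$. By hypothesis $H$ is the stabilizer of $C+H=D$, and $K$ is the stabilizer of $C+K=D$. Both are therefore the stabilizer $H(D)$ of the same set, so $H=H(D)=K$. The argument never uses that $H$ and $K$ are full stabilizers beyond this final identification, nor any of the structural (Kneser-type) input, so no result from \S\ref{sec:Models} should be needed.
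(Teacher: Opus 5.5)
Your proof is correct and follows essentially the paper's route: pass to $L=H\cap K$, use Lemma \ref{lem:FiniteIndexMean} to turn a density equality between unions of $L$-cosets into an equality of sets, and then identify $H$ and $K$ as stabilizers of the same set. The differences are cosmetic. The paper does not pick a maximizing mean; it uses the chain $\dens^{*}(C)\leq\dens^{*}(C+L)\leq\dens^{*}(C+H)$ to get $C+L=C+H$ (and likewise $C+L=C+K$). It then concludes via $C+H+K=C+H$ and symmetry, whereas you finish more directly with $H=H(D)=K$.
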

\begin{proof}
Let $L:=H\cap K$, so that $L$ has finite index in $\Gamma$. Since $L\subseteq H$, we have $\dens^{*}(C)\leq \dens^{*}(C+L)\leq \dens^{*}(C+H)$.  Since $\dens^{*}(C)=\dens^{*}(C+H)$, this implies  $\dens^{*}(C+L)=\dens^{*}(C+H).$ Since $L$ and $H$ both have finite index, Lemma \ref{lem:FiniteIndexMean} then implies $C+L=C+H$.  Likewise $C+L=C+K$, so $C+H=C+K$.  Then $C+H+K=C+K+K=C+K=C+H$, so $K$ is contained in the stabilizer of $C+H$, meaning $K\subseteq H$.  By symmetry, we get $H\subseteq K$, so $H=K$.
\end{proof}

We now prove Theorem \ref{th:TwoMeansStrict}.  Recall the statement: 	Let $\Gamma$ be a discrete abelian group and $A$, $B\subseteq \Gamma$ nonempty.  There is a unique finite index subgroup $K\leq \Gamma$ (depending only on $A+B$) satisfying all of the following:  if $\nu$, $\eta\in \mathcal M_{\tau}^{ext}(\Gamma)$ satisfy
\begin{equation}\label{eq:TwoExtremeHypRepeat}
	\max\{\nu(A+B),\eta(A+B)\} <\nu(A)+\eta(B),
\end{equation}
we have $A+B\sim_{\nu,\eta} A+B+K$. Furthermore,  $K$ is the stabilizer of $A+B+K$, and 
\begin{equation}\label{eq:TwoKneserRepeat}
	\nu(A+B+K)=\nu(A+K)+\nu(B+K)-\nu(K).
\end{equation}

\begin{proof}[Proof of Theorem \ref{th:TwoMeansStrict}]
Assuming $\nu, \eta\in \mathcal M_{\tau}^{ext}(\Gamma)$ satisfy (\ref{eq:TwoExtremeHypRepeat}), we may apply Proposition \ref{prop:TwoMeansTechnical}.  With the subgroup $H$ in the conclusion of Proposition \ref{prop:TwoMeansTechnical}, we set $K=H$, so that $K$ has finite index, $K=H(A+B+K)$, and $A+B\sim_{\nu,\eta}A+B+K $.  By Lemma  \ref{lem:TwoSubgroups}, the same conclusion holds, with the same subgroup $K$, for any two extreme invariant means $\nu$, $\eta$ satisfying (\ref{eq:TwoExtremeHypRepeat}).  By Proposition \ref{prop:TwoMeansTechnical} \ref{item:HisH} and \ref{item:Kneser}, we get that $K$ is the stabilizer of $A+B+K$, and equation (\ref{eq:TwoKneserRepeat}) holds.

To prove the asserted uniqueness of $K$, assume that $K_{1}$, $K_{2}\leq \Gamma$ have finite index and satisfy the conclusion of Theorem \ref{th:TwoMeansStrict}.  Fix $\nu, \eta\in \mathcal M_{\tau}^{ext}(\Gamma)$ satisfying \ref{eq:TwoExtremeHypRepeat}.  Then $\nu(A+B)=\nu(A+B+K_{1})=\nu(A+B+K_{2})$, since both $K_{i}$ have finite index.  Thus $A+B+K_{1}=A+B+K_{2}$.  Our assumption implies that $K_{1}$ is the stabilizer of $A+B+K_{1}$, and therefore $K_{1}$ is also the stabilizer of $A+B+K_{2}$, which is $K_{2}$.  Thus $K_{1}=K_{2}$.
\end{proof}

\section{Choquet decomposition}\label{sec:Choquet}

In order to derive Theorem \ref{th:MainOneMeanStrict} from Theorem \ref{th:OneExtreme}, we will write  a given invariant mean $m$ as a convex combination of extreme invariant means.  In the remainder of this article ``measure'' will refer only to countably additive measures.
\begin{definition}
Let $\Gamma$ be a discrete abelian group and $m\in \mathcal M(\Gamma)$.  We say that a probability measure $\sigma$ on $\mathcal M(\Gamma)$ \emph{represents $m$} if for each $f\in \ell^{\infty}(\Gamma)$, the map  $\Lambda_{f}:\mathcal M(\Gamma)\to \mathbb C$ given by $\Lambda_{f}(\nu)= \nu(f)$ is $\sigma$-measurable, and $m(f)=\int \nu(f)\, d\sigma(\nu)$.
\end{definition}
The following theorem is due to Bishop and de Leeuw, generalizing Choquet's theorem - see  \cite[pp.~22-23]{Phelps_LecturesOnChoquet2ndEd}.  We write $\operatorname{ex} X$ to denote the extreme points of the set $X$, and we write ``$\sigma$'' for the measure written as ``$\mu$'' in \cite{Phelps_LecturesOnChoquet2ndEd}.

\begin{theorem}\label{th:CBdL}  Suppose that $X$ is a compact convex subset of a locally convex space, and denote by $\mathcal S$ the $\sigma$-ring of subsets of $X$ which is generated by $\operatorname{ex}\,X $ and the Baire sets.  Then for each point $x_0$ in $X$ there exists a nonnegative measure $\sigma$ on $\mathcal S$ with $\sigma(X)=1$ such that $\sigma$ represents $x_0$ and $\sigma(\operatorname{ex}\, X)=1$.
\end{theorem}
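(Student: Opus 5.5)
The statement is the Bishop--de Leeuw form of Choquet's theorem, which the paper cites rather than proves. If I were to prove it, I would follow the classical route through the Choquet ordering and maximal measures. Throughout, ``measure'' means a regular Borel (or Baire) probability measure on $X$. For such measures $\lambda,\lambda'$, write $\lambda\prec\lambda'$ if $\lambda(f)\leq\lambda'(f)$ for every continuous convex $f$ on $X$.

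\textbf{Step 1: a maximal measure above $x_{0}$.} Start with the point mass $\varepsilon_{x_{0}}$, which represents $x_{0}$. Use Zorn's lemma: every $\prec$-chain has an upper bound, namely a weak$^{*}$ cluster point of the chain, since the probability measures on $X$ are weak$^{*}$ compact. This gives a $\prec$-maximal $\lambda$ with $\varepsilon_{x_{0}}\prec\lambda$. Testing $\prec$ against the continuous affine functions $\pm h$ shows that $\lambda$ still represents $x_{0}$.

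\textbf{Step 2: the Mokobodzki characterization.} For continuous $f$, let $\bar f$ be the upper envelope $\inf\{h: h \text{ continuous affine}, h\geq f\}$. Show that maximality forces $\lambda(f)=\lambda(\bar f)$ for every continuous convex $f$. The argument: if $\lambda(f)<\lambda(\bar f)$, the sublinear functional $p(g):=\lambda(\bar g)$ dominates the linear functional $t f\mapsto t\lambda(\bar f)$. Hahn--Banach then produces a positive functional, that is a measure, that dominates $\lambda$ in the Choquet order and differs from it, contradicting maximality.

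\textbf{Step 3: $\lambda$ vanishes on Baire sets disjoint from $\operatorname{ex}X$.} I expect this to be the main obstacle, because $X$ need not be metrizable and $\operatorname{ex}X$ need not be Baire. I would argue as follows.
\begin{enumerate}
\item A Baire set $B$ is determined by countably many continuous functions. By Stone--Weierstrass, each continuous function is a uniform limit of polynomials in continuous affine functions, so countably many continuous affine functions $h_{1},h_{2},\dots$ suffice.
\item Put $T=(h_{n})_{n}:X\to Y:=T(X)\subseteq\mathbb R^{\mathbb N}$. Then $Y$ is metrizable compact convex, $T$ is affine, and $B=T^{-1}(B')$ for a Borel set $B'\subseteq Y$.
\item Take the strictly convex function $\phi=\sum 2^{-n}c_{n}h_{n}^{2}$ on $Y$ (with suitable normalizing constants $c_n$), and set $f=\phi\circ T$. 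By Step 2, $\lambda$ is concentrated on $\{f=\bar f\}$.
\item The delicate point is to show that $\lambda$-almost every point of $\{f=\bar f\}$ lies over an extreme point of $Y$, i.e. in $T^{-1}(\operatorname{ex}Y)$. Then push forward: $T_{*}\lambda$ is maximal on $Y$ and is therefore carried by the Borel set $\operatorname{ex}Y$.
\item Finally, relate $T^{-1}(\operatorname{ex}Y)$ to $\operatorname{ex}X$ for those $B$ that are disjoint from $\operatorname{ex}X$. I would try to show that $B\cap T^{-1}(\operatorname{ex}Y)$ is $\lambda$-null by iterating the construction: enlarge the countable family $(h_{n})$ so that the relevant faces $T^{-1}(y)$ are handled, and use that every face $T^{-1}(y)$ with $y\in\operatorname{ex}Y$ is a closed face of $X$ containing extreme points of $X$.
\end{enumerate}
This bookkeeping with faces is exactly where the Bishop--de Leeuw argument is subtle, and I would follow Phelps closely here.

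\textbf{Step 4: defining $\sigma$.} Every set in the $\sigma$-ring $\mathcal S$ generated by $\operatorname{ex}X$ and the Baire sets has the form $E=(B_{1}\cap\operatorname{ex}X)\cup(B_{2}\setminus\operatorname{ex}X)$ with $B_{1},B_{2}$ Baire. Define $\sigma(E):=\lambda(B_{1})$. By Step 3 this is well defined: if $B_{1}\cap\operatorname{ex}X=B_{1}'\cap\operatorname{ex}X$, then $B_{1}\triangle B_{1}'$ is a Baire set disjoint from $\operatorname{ex}X$, hence $\lambda$-null. Countable additivity on $\mathcal S$ then follows from that of $\lambda$. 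We get $\sigma(\operatorname{ex}X)=\lambda(X)=1$, and $\sigma$ represents $x_{0}$ because it agrees with $\lambda$ on Baire sets, so it integrates continuous affine functions in the same way.
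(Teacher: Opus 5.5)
The paper does not prove this theorem; it quotes it from Phelps. Your Steps 1, 2 and 4 are the standard ones and are fine. The gap is in Step 3, item 4, and it is not a matter of bookkeeping. Both assertions there are false in general: that $\lambda$-almost every point of $\{f=\bar f\}$ lies over $\operatorname{ex}Y$, and that $T_*\lambda$ is maximal on $Y$. Take $X$ to be the triangle with vertices $(0,0)$, $(1,0)$, $(1/2,1)$ and $T(s,t)=s$, so $Y=[0,1]$. The apex $p$ is extreme, so $\delta_p$ is the only measure representing $p$; it is maximal, and $p\in\{f=\bar f\}$ for every continuous $f$. But $T(p)=1/2\notin\operatorname{ex}Y$, and $T_*\delta_p=\delta_{1/2}\prec\frac12(\delta_0+\delta_1)$, so $T_*\delta_p$ is not maximal. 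Push-forwards of maximal measures under affine surjections need not be maximal, so you cannot invoke the metrizable Choquet theorem on $Y$ in this way.

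Conversely, item 5, which you flag as the subtle point, is immediate. If $y\in\operatorname{ex}Y$, then $T^{-1}(y)$ is a closed face of $X$, so it contains an extreme point of $X$ by Krein--Milman. Since $B=T^{-1}(B')$ is a union of fibres and misses $\operatorname{ex}X$, you get $T(B)\cap\operatorname{ex}Y=\varnothing$ automatically. This is also why the triangle does not contradict the theorem: a $T$-saturated set missing $\operatorname{ex}X$ cannot contain the fibre over $1/2$.

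So the whole content of the Bishop--de Leeuw theorem sits in the step you have not proved. You must show $\lambda(B)=0$ using only that every $T$-fibre inside $B$ is a compact convex set containing no extreme point of $X$. In other words, $T_*\lambda$ must be shown to be carried by $T(\operatorname{ex}X)$, not by $\operatorname{ex}Y$. This has to be argued on $X$ itself. For instance, reduce by inner regularity to compact $G_\delta$ sets $K\subseteq B$ with $K=T^{-1}(T(K))$, and show that $K$ misses $\{f=\bar f\}$, so that Step 2 gives $\lambda(K)=0$. Your outline contains no argument for this, and deferring to Phelps at exactly this point leaves the proof incomplete.
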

Here ``$\sigma$ represents $x_0$'' means that for every continuous linear functional $\Lambda$ on $X$,
\begin{equation}\label{eq:Choquet}
\Lambda(x_0)=\int \Lambda(x)\, d\sigma(x).
\end{equation}
We call the measure $\sigma$ described in Theorem \ref{th:CBdL} a \emph{Choquet decomposition} of $x_{0}$.

A \emph{$\sigma$-ring} is a collection of sets closed under countable unions and set difference.

``Baire sets'' refers to the Baire subsets of $X$, these being elements of the smallest $\sigma$-ring containing the compact $G_{\delta}$ subsets of $X$.  Since $X$ itself is compact, this $\sigma$-ring is actually a $\sigma$-algebra.

The ``$\sigma$-ring of subsets of $X$ which is generated by $\operatorname{ex} X$ and the Baire sets'' is the smallest $\sigma$-ring containing $\operatorname{ex} X$ and the Baire subsets of $X$.

Corollary \ref{cor:ChoquetForMeans} below is the specialization of Theorem \ref{th:CBdL} to the case where $X=\mathcal M_{\tau}(\Gamma)$. In this setting $\operatorname{ex} X$ is the set $\mathcal M_{\tau}^{ext}(\Gamma)$ of extreme invariant means on $\ell^{\infty}(\Gamma)$.  The linear functionals we consider have the form $\Lambda_{f}$ described above, and in this setting (\ref{eq:Choquet}) becomes
\begin{equation}\label{eq:ChoquetMean}
	m(f) = \int \nu(f)\, d\sigma(\nu).
\end{equation} The $\sigma$-ring $\mathcal S$  matters only insofar as it contains $\mathcal M_{\tau}^{ext}(\Gamma)$ and ensures that for every $f\in \ell^{\infty}(\Gamma)$, the map $\Lambda_{f}$ is $\sigma$-measurable. For each $A$, $B\subseteq \Gamma$, this makes 
\[\{\nu\in \mathcal M_{\tau}^{ext}(\Gamma):\nu(A+B)<\nu(A)+\nu(B)\}\] $\sigma$-measurable, as the defining inequality is equivalent to $\nu(1_{A}+1_{B}-1_{A+B})>0$.

\begin{corollary}\label{cor:ChoquetForMeans}
	Let $\Gamma$ be a discrete abelian group and let $m\in \mathcal M_{\tau}(\Gamma)$.  Then there is a probability measure $\sigma$ on $\mathcal M_{\tau}(\Gamma)$ such that $\sigma(\mathcal M_{\tau}^{ext}(\Gamma))=1$, for all $f\in\ell^{\infty}(\Gamma)$ the map $\Lambda_{f}:\mathcal M_{\tau}(\Gamma)\to \mathbb C$ given by $\Lambda_{f}(\nu)= \nu(f)$ is $\sigma$-measurable, and  (\ref{eq:ChoquetMean}) holds.
\end{corollary}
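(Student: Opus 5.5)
This is a direct specialization of Theorem \ref{th:CBdL}, so the plan is to check its hypotheses for $X=\mathcal M_{\tau}(\Gamma)$ and then translate its conclusion into the form stated.

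\textbf{Hypotheses.} We take the ambient locally convex space to be $\ell^{\infty}(\Gamma)^{*}$ with the weak$^{*}$ topology; it is Hausdorff and locally convex. In \S\ref{sec:Means} we observed that $X:=\mathcal M_{\tau}(\Gamma)$ is a nonempty, weak$^{*}$-compact, convex subset of this space. Compactness follows from Banach--Alaoglu: $\mathcal M_{\tau}(\Gamma)$ is the intersection of the unit ball with weak$^{*}$-closed conditions, namely positivity, $m(1_{\Gamma})=1$, and $m(\tau_{\gamma}f)=m(f)$. The extreme points are $\operatorname{ex} X=\mathcal M_{\tau}^{ext}(\Gamma)$ by definition.

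\textbf{Applying the theorem.} Given $m\in X$, Theorem \ref{th:CBdL} produces a nonnegative measure $\sigma$ on the $\sigma$-ring $\mathcal S$ generated by $\mathcal M_{\tau}^{ext}(\Gamma)$ and the Baire sets. This $\sigma$ satisfies $\sigma(X)=1$ and $\sigma(\mathcal M_{\tau}^{ext}(\Gamma))=1$, and it represents $m$. Since $X$ lies in $\mathcal S$, this $\sigma$-ring is in fact a $\sigma$-algebra on $X$, so $\sigma$ is a genuine probability measure on $\mathcal M_{\tau}(\Gamma)$.

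\textbf{Measurability of $\Lambda_{f}$.} Fix $f\in\ell^{\infty}(\Gamma)$. The map $\Lambda_{f}(\nu)=\nu(f)$ is the restriction to $X$ of a weak$^{*}$-continuous linear functional, so it is continuous on the compact space $X$. Any continuous real function $g$ on a compact space is Baire measurable: each set $\{g\geq t\}$ is a closed $G_{\delta}$, being $\bigcap_{n}\{g>t-1/n\}$, and hence is a compact $G_{\delta}$. Applying this to the real and imaginary parts of $\Lambda_{f}$ shows that $\Lambda_{f}$ is $\mathcal S$-measurable.

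\textbf{Representation formula.} Representation gives $\Lambda(m)=\int\Lambda\,d\sigma$ for every continuous linear functional $\Lambda$. We apply this to $\Lambda=\Lambda_{f}$, using the real and imaginary parts separately if the representation is stated only for real functionals. This yields \eqref{eq:ChoquetMean}: $m(f)=\int\nu(f)\,d\sigma(\nu)$.

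There is no real obstacle in this argument. The only points needing care are the two just handled: that $\mathcal S$ is a $\sigma$-algebra on $X$, and that each $\Lambda_{f}$ is Baire measurable rather than merely continuous.
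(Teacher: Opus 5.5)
Your proposal is correct and matches the paper's approach: the corollary is simply Theorem \ref{th:CBdL} applied with $X=\mathcal M_{\tau}(\Gamma)$ inside $\ell^{\infty}(\Gamma)^{*}$ (weak$^{*}$ topology), $\operatorname{ex}X=\mathcal M_{\tau}^{ext}(\Gamma)$, and the continuous linear functionals taken to be the maps $\Lambda_{f}$. You supply more detail than the paper does, and it is all sound: compactness via Banach--Alaoglu, the observation that $\mathcal S$ is a $\sigma$-algebra because $X$ is itself a compact $G_{\delta}$, Baire measurability of each $\Lambda_{f}$, and the split into real and imaginary parts.
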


\section{Invariant elements of \texorpdfstring{$L^{2}(m)$}{L2(m)}}\label{sec:Hilbert}

\subsection{Hilbert space associated to a mean} We summarize some facts from \S5.4 of \cite{Griesmer_DiscreteSumsets}.  

Let $\Gamma$ be a discrete abelian group.  Recall that $\mathcal M(\Gamma)$, $\ell^{\infty}(\Gamma)$, and $\mathcal M_{\tau}^{ext}(\Gamma)$ denote the set of means, invariant means, and extreme invariant means on  $\ell^{\infty}(\Gamma)$, respectively.

Let $m\in \mathcal M(\Gamma)$. For $f, g\in \ell^{\infty}(\Gamma)$, we write $f\sim_{m}g$ when $m(|f-g|^{2})=0$.  Note that $\sim_{m}$ is an equivalence relation; we write $[f]$ for the $\sim_{m}$-equivalence class of $f$ and we write $Z$ for the subspace $\{f\in \ell^{\infty}(\Gamma):m(|f|^{2})=0\}$.  We let $L^{2}(m)_{0}$ denote the quotient $\ell^{\infty}(\Gamma)/Z$. Note that $L^{2}(m)_{0}$ is a metric space under the metric $d$ given by $d([f],[g]):=m(|f-g|^{2})^{1/2}$.    We let $L^{2}(m)$ denote the metric completion of $L^{2}(m)_{0}$. A sequence $(f_{n})_{n\in \mathbb N}$ of bounded functions $f_{n}:\Gamma\to \mathbb C$ is \emph{$L^{2}(m)$-Cauchy} if $\lim_{n\to\infty} \sup_{i,j\geq n} m(|f_{i}-f_{j}|^{2})^{1/2}=0$; we may simply write ``Cauchy sequence'' if the context is clear.  Every element of $L^{2}(m)$ is represented by such a Cauchy sequence.  Given two elements $v, w\in L^{2}(m)$ represented by Cauchy sequences $(f_{n})_{n\in \mathbb N}$, $(g_{n})_{n\in \mathbb N}$, let $\langle v,w\rangle_{L^{2}(m)}:=\lim_{n\to\infty} m(f_{n}\bar{g}_{n})$. Then $\langle \cdot, \cdot \rangle_{L^{2}(m)}$ is an inner product generating the metric on $L^{2}(m)$.

When $v$, $w\in L^{2}(m)$ are represented by uniformly bounded Cauchy sequences $(f_{n})_{n\in \mathbb N}$ and $(g_{n})_{n\in \mathbb N}$, the sequence $(f_{n}g_{n})_{n\in \mathbb N}$ is a Cauchy sequence, as well; we write $vw$ for the element of $L^{2}(m)$ given by $(f_{n}g_{n})_{n\in \mathbb N}$. It is easy to verify that $vw$ does not depend on the choice of sequence representing $v$ or $w$.

For $f\in \ell^{\infty}(\Gamma)$ and $w\in L^{2}(m)$, we will write $w\sim_{m} f$ to mean that $w$ is represented by the constant Cauchy sequence $f_{n}=f$.

Note that $m$ extends to $L^{2}(m)$, via the definition $m(w):=\langle w,1_{\Gamma}\rangle_{L^{2}(m)}$, or simply $m(w):=\lim_{n\to\infty} m(f_{n})$, where $(f_{n})_{n\in \mathbb N}$ represents $w$.

\subsection{Translation action}
Recall the action $\tau$ on $\ell^{\infty}(\Gamma)$ from \S\ref{sec:Means}: $(\tau_{\gamma}f)(x):=f(x-\gamma)$.  Each $\tau_{\gamma}$ preserves the $L^{2}(m)$ distance: $\|\tau_{\gamma}f-\tau_{\gamma}g\|_{L^{2}(m)}=\|f-g\|_{L^{2}(m)}$.  Since $\ell^{\infty}(\Gamma)$ forms a dense subset of $L^{2}(m)$, we see that each $\tau_{\gamma}$ induces an isometry of $L^{2}(m)$: if $w\in L^{2}(m)$ is represented by an $L^{2}(m)$-Cauchy sequence $(f_{n})_{n\in \mathbb N}$, then $\tau_{\gamma}w$ is represented by $(\tau_{\gamma} f_{n})_{n\in \mathbb N}$.  We say $w\in L^{2}(m)$ is \emph{$\tau$-invariant} if $\tau_{\gamma}w=w$ for all $\gamma \in w$.

\subsection{Indicator elements}

\begin{definition}\label{def:Indicator}
	Let $m\in \mathcal M(\Gamma)$.  We say $w\in L^{2}(m)$ is an \emph{indicator element} of $L^{2}(m)$ if there is a sequence of sets $C_{n}\subseteq \Gamma$ such that $w=\lim_{n\to\infty} 1_{C_{n}}$ in $L^{2}(m)$.
\end{definition}
Definition \ref{def:StrongAbsoluteContinuity} can be abbreviated as follows.
\begin{definition}\label{def:Abbreviated}
	Let $m$, $m'\in \mathcal M(\Gamma)$ and $\beta\geq 0$.  We say that $m'$ is \emph{a $\beta$-restriction of $m$} if there is an indicator element $v\in L^{2}(m)$ with $m(v)\geq \beta$ such that $m'(f)=m(v)^{-1}m(fv)$ for all $f\in \ell^{\infty}(\Gamma)$.
\end{definition}

\subsection{Completeness}\label{sec:Completeness}

If it were case that for every $m\in \mathcal M_{\tau}(\Gamma)$, $L^{2}(m)_{0}$ is complete under the $L^{2}(m)$ metric, then Definition \ref{def:StrongAbsoluteContinuity} and the proofs of Theorems \ref{th:MainOneMeanStrict} and \ref{th:MainFolnerStrict} could be substantially simplified: the Cauchy sequence $(1_{C_{n}})_{n\in \mathbb N}$ in Definition \ref{def:Indicator} would converge to a $\sim_{m}$ equivalence class of an actual function $1_{C}\in \ell^{\infty}(\Gamma)$.  But D.~Fremlin, in Theorem 6 of \S4 of  \cite{BlassEtAl_NoteOnExtensions}, constructed invariant means $m$ on $\ell^{\infty}(\mathbb Z)$ such that $L^{2}(m)_{0}$ is not complete. Theorem 5.1 of \cite{Kunisada_DensityMeasures}  and related results therein  characterize certain constructions resulting in $L^{2}(m)_{0}$ being complete -- c.f.~\cite{Kunisada_AdditiveProperty}.

\subsection{Means from Choquet decomposition; outline of Proof of Theorem \ref{th:MainOneMeanStrict}}

Given an invariant mean $m$ on $\ell^{\infty}(\Gamma)$, a Choquet decomposition $\sigma$ of $m$, and a set $E\subseteq \mathcal M_{\tau}^{ext}(\Gamma)$ with $\sigma(E)>0$, we will define a new invariant mean $m'$ by
\begin{equation}\label{eq:NewMean}
	m'(f):=\frac{1}{\sigma(E)}\int \nu(f)1_{E}(\nu)\, d\sigma(\nu).
\end{equation}
Intuitively, there should be a $C\subseteq \Gamma$ such that $m'$ is also  given by $m'(f)=m(1_{C})^{-1}m(f1_{C})$, with $m(C)\geq \sigma(E)$ and $m(C\triangle (C+\gamma))=0$ for all $\gamma \in \Gamma$. 

While such a $C$ may not exist in light of \S\ref{sec:Completeness},   the following conjecture is plausible.

\begin{conjecture}\label{conj:Indicator}
	If $\Gamma$ is a discrete abelian group, $m\in \mathcal M_{\tau}(\Gamma)$, $\sigma$ is a Choquet decomposition of $m$, and $E\subseteq \mathcal M_{\tau}^{ext}(\Gamma)$ is $\sigma$-measurable, then there is a $\tau$-invariant indicator element $w\in L^{2}(m)$ such that $m(fw)=\int \nu(f)1_{E}(\nu)\, d\sigma(\nu)$ for all $f\in \ell^{\infty}(\Gamma)$.
\end{conjecture}
 As we do not have a proof of Conjecture \ref{conj:Indicator}, we are forced to deal with some additional technicalities outlined in Remark \ref{rem:Sad}. Conjecture \ref{conj:Indicator} would yield a short proof of Theorem \ref{th:MainOneMeanStrict}; we outline it here to motivate the subsequent technicalities.

\begin{proof}[Proof of Theorem \ref{th:MainOneMeanStrict}, assuming Conjecture \ref{conj:Indicator}] Let $A$, $B\subseteq \Gamma$ satisfy $\delta:=m(A+B)-m(A)-m(B)>0$, and let $\sigma$ be a Choquet decomposition of $m$.  Let $K\leq \Gamma$ be the finite index subgroup given by Theorem \ref{th:OneExtreme}, and write $k$ for its index. We aim to prove that there is an indicator element $w$ such that the mean given by $m'(f):=m(fw)$ satisfies
\begin{enumerate}
	\item\label{item:kdeltaStrong} $m'$ is a $k\delta$-restriction of $m$;
	\item\label{item:Relized*}  $m'(A+B)=m'(A+B+K)$;
	\item\label{item:SameGap} $m'(A)+m'(B)-m'(A+B)\geq \delta.$
\end{enumerate}

Let $E:=\{\nu \in \mathcal M_{\tau}^{ext}(\Gamma):\nu(A)+\nu(B)-\nu(A+B)>0\}$.  Note that
\begin{equation}\label{eq:int}
	\begin{split}	\int \bigl(\nu(A)+\nu(B)-\nu(A+B)\bigr)1_{E}(\nu)\, d\sigma(\nu)&\geq 	\int \nu(A)+\nu(B)-\nu(A+B)\, d\sigma(\nu)\\
		&=m(A)+m(B)-m(A+B).
	\end{split}
\end{equation}For each $\nu\in E$, we have $\nu(A+B)=\nu(A+B+K)=\dens^{*}(A+B+K)$, where $K$ is the subgroup given by Theorem \ref{th:OneExtreme}. Thus
\begin{equation}
	\int \nu(A+B)1_{E}(\nu)\, d\sigma(\nu)= \int \dens^{*}(A+B+K)1_{E}(\nu)\, d\sigma(\nu)=\dens^{*}(A+B+K)\sigma(E).
\end{equation}
Assuming Conjecture \ref{conj:Indicator}, we choose a $\tau$-invariant indicator element $w\in L^{2}(m)$ such that 
\begin{equation}\label{eq:GoodIndicator}
	m(fw)=\int \nu(f)1_{E}(\nu)\, d\sigma(\nu)
\end{equation} for all $f\in \ell^{\infty}(\Gamma)$.  We define $m'\in \mathcal M_{\tau}(\Gamma)$ by
\[
m'(f):=m(w)^{-1}m(fw).
\]
With $f=1_{\Gamma}$, $f=1_{A+B}$, and $f=1_{A}+1_{B}-1_{A+B}$, respectively, (\ref{eq:GoodIndicator}) becomes
\begin{align}
	\label{eq:mw}m(w)&=\sigma(E)\\
	\label{eq:m1ABw} m(1_{A+B}w)&= \dens^{*}(A+B+K)\sigma(E)\\
	\label{eq:m1A1B1AB} m((1_{A}+1_{B}-1_{A+B})w) &= \int \bigl(\nu(A)+\nu(B)-\nu(A+B)\bigr)1_{E}(\nu)\, d\sigma(\nu).
\end{align}
Combining (\ref{eq:mw}) and (\ref{eq:m1ABw}), we get
\begin{equation}\label{eq:mwd*}
	m'(A+B)=m(w)^{-1}m(1_{A+B}w)=\dens^{*}(A+B).
\end{equation}

To verify \ref{item:kdeltaStrong}, it suffices to prove $m(w)\geq k\delta$, where $k=[\Gamma:K]$. To do so, apply Corollary \ref{cor:Q} to get $k^{-1}\geq m'(1_{A}+1_{B}-1_{A+B})$. From (\ref{eq:int}) and (\ref{eq:GoodIndicator}) we get \begin{equation}\label{eq:m'Something} m'(1_{A}+1_{B}-1_{A+B})\geq m(w)^{-1}\delta.
\end{equation}   Combining these, we have $k^{-1}\geq m(w)^{-1}\delta$, so $m(w)\geq k\delta$.

Let $m'\in \mathcal M_{\tau}(\Gamma)$ be given by $m'(f)=m(w)^{-1}m(fw)$.  Then (\ref{eq:mwd*}) implies $m'(A+B)=\dens^{*}(A+B)=\dens^{*}(A+B+K)$.  Thus $m'(A+B)=m'(A+B+K)$, verifying \ref{item:Relized*}.  

Inequality \ref{item:SameGap} follows immediately from (\ref{eq:m'Something}), as $0<m(w)\leq 1$.
This completes the proof under the assumption that Conjecture \ref{conj:Indicator} holds. \end{proof}

\begin{remark}\label{rem:Sad} Unfortunately we cannot use Conjecture \ref{conj:Indicator}, as we have been unable to prove there is an \emph{indicator element} $w$ satisfying $m(fw)=\int \nu(f)1_{E}(\nu)\, d\sigma(\nu)$ for all $f\in \ell^{\infty}(\Gamma)$.  But it is easy to find functions $g_{n}:\Gamma\to [0,1]$ so that $w=\lim_{n\to\infty} g_{n}$ satisfies this equation.   With such $g_{n}$, we set $v_{n}=1_{\{g_{n}>0\}}$ and $v=\lim_{n\to\infty} v_{n}$. Then it is easy to prove conclusion \ref{item:kdeltaStrong} in Theorem \ref{th:MainOneMeanStrict}, but \ref{item:SameGap} may fail with $v_{n}$ so defined, so we replace $\{g_{n}>0\}$ with appropriately chosen subsets.  Lemma \ref{lem:Final} is the resulting modification of Conjecture \ref{conj:Indicator}.  Note that we are not assuming $\sigma$ is a Choquet decomposition in the hypothesis.
\end{remark}

\begin{lemma}\label{lem:Final}
	Let $\sigma$ be a probability measure on $\mathcal M_{\tau}(\Gamma)$ representing a mean $m$. Let $F:\mathcal M_{\tau}(\Gamma)\to [0,1]$ be $\sigma$-measurable and let $A$, $B\subseteq \Gamma$ satisfy
	\begin{equation}\label{eq:BigIntegral} \int (\nu(A)+\nu(B)-\nu(A+B))F(\nu)\, d\sigma:=\beta>0
	\end{equation}
	and
	\begin{equation}
		\int \nu(A+B)F(\nu)\, d\sigma(\nu) = \dens^{*}(A+B)\int F\, d\sigma.
	\end{equation}  Then there is a $\tau$-invariant indicator element $v\in L^{2}(m)$ such that 
	\begin{enumerate} 
		\item $m((1_{A}+1_{B}-1_{A+B})v)\geq \beta$;
		\item $m(v)^{-1}m(1_{A+B}v)=\dens^{*}(A+B)$. 
	\end{enumerate}
\end{lemma}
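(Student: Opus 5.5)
The plan is to first build a $\tau$-invariant ``density'' $w\in L^{2}(m)$ with values in $[0,1]$ that represents the functional $f\mapsto \int \nu(f)F(\nu)\,d\sigma(\nu)$, and then take for $v$ a suitable superlevel set of $w$, selected by a layer-cake averaging argument.

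\textbf{Step 1: the density $w$.} Define $\psi(f):=\int \nu(f)F(\nu)\,d\sigma(\nu)$ for $f\in\ell^{\infty}(\Gamma)$. Since $0\leq F\leq 1$ and $\sigma$ represents $m$, we have $0\leq \psi(f)\leq m(f)$ for $f\geq 0$. Hence
\[|\psi(f)|\leq m(|f|)\leq m(|f|^{2})^{1/2}.\]
So $\psi$ descends to a bounded linear functional on $L^{2}(m)_{0}$ and extends to $L^{2}(m)$. The Riesz representation theorem then gives $w\in L^{2}(m)$ with $\psi(f)=m(fw)$ for all $f\in\ell^{\infty}(\Gamma)$.
\begin{itemize}
\item Each $\nu$ in the support of $\sigma$ is invariant, so $\psi$ is invariant. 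Since $\tau_{\gamma}$ acts unitarily on $L^{2}(m)$, uniqueness in Riesz gives $\tau_{\gamma}w=w$.
\item From $0\leq\psi\leq m$ on nonnegative $f$, I will show $w$ is represented by a Cauchy sequence $g_{n}:\Gamma\to[0,1]$, by truncating any representing sequence: the map $\min(\max(\operatorname{Re} g,0),1)$ is $1$-Lipschitz, and projecting onto the closed convex set of such elements does not move $w$.
\end{itemize}

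\textbf{Step 2: level sets (expected main obstacle).} For $t\in(0,1)$ I want to define $v_{t}:=\lim_{n}1_{\{g_{n}>t\}}$ as an invariant indicator element. The difficulty is that $x\mapsto 1_{(t,\infty)}(x)$ is not continuous, so the sequence $1_{\{g_{n}>t\}}$ need not be Cauchy.

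My first approach is to pass to a genuine measure space. The uniformly closed algebra generated by bounded elements of $L^{2}(m)$ (say, the closure of $\ell^{\infty}(\Gamma)$ modulo $m$-null functions) is a commutative $C^{*}$-algebra. By Gelfand duality together with Riesz--Markov, $m$ becomes integration against a countably additive Radon measure $\lambda$ on a compact space $\Omega$, and $L^{2}(m)\cong L^{2}(\lambda)$. The translations $\tau_{\gamma}$ act on $\Omega$ by homeomorphisms preserving $\lambda$.

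In this model:
\begin{itemize}
\item $w$ is a genuine function $\Omega\to[0,1]$.
\item For all but countably many $t$, the level $\{w=t\}$ is $\lambda$-null. For such $t$, $1_{\{g_{n}>t\}}\to 1_{\{w>t\}}$ in $L^{2}(\lambda)$ by dominated convergence. Since $1_{\{g_{n}>t\}}$ is the indicator of a subset of $\Gamma$, the limit $v_{t}$ is an indicator element of $L^{2}(m)$ in the sense of Definition~\ref{def:Indicator}.
\item Invariance of $w$ gives invariance of $\{w>t\}$ modulo null sets, so $v_{t}$ is $\tau$-invariant.
\end{itemize}
Only the Cauchy property of the $1_{\{g_{n}>t\}}$ is really needed, so one could instead argue directly via the distribution of $w$ under $m$. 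Either way, this is the technical heart of the proof.

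\textbf{Step 3: choosing $t$.} By the layer-cake identity, $m(hw)=\int_{0}^{1}m(hv_{t})\,dt$ for every bounded $h$. Let $G:=\{t: m(v_{t})>0\}$. For each $t\in G$, the functional $f\mapsto m(v_{t})^{-1}m(fv_{t})$ is an invariant mean, so
\[m(1_{A+B}v_{t})\leq \dens^{*}(A+B)\,m(v_{t}).\]
The second hypothesis reads
\[\int_{0}^{1}m(1_{A+B}v_{t})\,dt=\dens^{*}(A+B)\int_{0}^{1}m(v_{t})\,dt.\]
An integral of nonpositive terms vanishing forces equality for almost every $t\in G$; for $t\notin G$ both sides are $0$ trivially.

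Now set $h:=1_{A}+1_{B}-1_{A+B}$. Then $\int_{0}^{1}m(hv_{t})\,dt=\beta$, an average over an interval of length $1$. So the set of $t$ with $m(hv_{t})\geq\beta$ has positive Lebesgue measure. Also $m(hv_{t})=0$ when $m(v_{t})=0$, and $\beta>0$, so every such $t$ lies in $G$.

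Intersecting with the co-countable set from Step~2 and the full-measure set of equality above, I pick one $t$ and set $v:=v_{t}$. This $v$ satisfies (i) and (ii).
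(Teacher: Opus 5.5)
Your argument is correct, and after Step 1 it follows a different route from the paper. Step 1 is the paper's Lemma~\ref{lem:BoundRepresentative}, which combines Riesz representation with the positivity and truncation Lemmas~\ref{lem:Nonneg} and~\ref{lem:to01}.

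From there the paper stays inside the Cauchy-sequence description of $L^{2}(m)$:
\begin{enumerate}
\item Lemma~\ref{lem:LevelSetInL2} constructs level sets $1_{\{a\le w\le b\}}$ using Helly's selection theorem.
\item Lemma~\ref{lem:ExtremityOfdStar} shows that each such slice still realizes the ratio $\dens^{*}(A+B)$, via the finite convex decomposition of Lemma~\ref{lem:dStarConvex} and the Riemann-sum estimates of Lemma~\ref{lem:Riemann}.
\item Lemma~\ref{lem:StepApproximation} takes $v$ to be a union of slices $\{w\in I\}$ with $m(h\,w1_{\{w\in I\}})>0$. In effect it rounds $w$ up to $1$ on those slices, and it uses the strict gain coming from $\{w\le c\}$, $c<1$, to absorb the discretization error.
\end{enumerate}

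You do two things differently.
\begin{enumerate}
\item You model $L^{2}(m)$ as $L^{2}(\lambda)$ for a countably additive invariant Radon measure $\lambda$ on a compact space (effectively $\beta\Gamma$). This makes level sets routine, and Step 1 too, since $w$ is then just a Radon--Nikodym density of $\psi\le m$.
\item You replace the slicing with the layer-cake identity $w=\int_0^1 1_{\{w>t\}}\,dt$. The $\dens^{*}$ condition then holds for almost every $t$, because a nonpositive integrand has zero integral. The bound $m(hv_t)\ge\beta$ holds on a set of $t$ of positive measure, because an average equals $\beta$.
\end{enumerate}

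What this buys you: a shorter proof, a single superlevel set $v=1_{\{w>t\}}$, and the exact inequality $\ge\beta$ with no error terms and no need to manufacture a strict gain. What the paper's route buys: it avoids the Gelfand/Riesz--Markov representation altogether.

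Two small points to tighten.
\begin{enumerate}
\item You only know $\hat g_n\to w$ in $L^{2}(\lambda)$, so argue $1_{\{g_n>t\}}\to 1_{\{w>t\}}$ via convergence in measure rather than dominated convergence.
\item The Gelfand image of $1_{\{g_n>t\}}$ is the indicator of the clopen closure of $\{g_n>t\}$ in $\beta\Gamma$, not $1_{\{\hat g_n>t\}}$. It lies between $\{\hat g_n>t\}$ and $\{\hat g_n\ge t\}$. That sandwich, together with $\lambda(\{w=t\})=0$, is exactly what the in-measure argument needs.
\end{enumerate}
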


We prove Lemma \ref{lem:Final} at the end of \S\ref{sec:AbsCont}.

\subsection{Level sets} Due to the possible incompleteness mentioned in \S\ref{sec:Completeness}, a given element $w\in L^{2}(m)$ may not satisfy $w\sim_{m} f$ for any function $f:\Gamma\to \mathbb C$.  For such $w$, it is convenient to define associated elements of $L^{2}(m)$ which play the role of level sets.

Given $w\in L^{2}(m)$, a real-valued Cauchy sequence $(f_{n})_{n\in \mathbb N}$ representing $w$, and $E\subseteq \mathbb R$, consider
\begin{equation} C(f_{n},E):=\{\gamma \in \Gamma : f_{n}(\gamma) \in E\}.
\end{equation}  
It is natural to define the ``level set'' $\{a\leq w\leq b\}$ to be the limit of $g_{n}=1_{C(f_{n},[a,b])}$ in $L^{2}(m)$.  But $g_{n}$ so defined may not form an $L^{2}(m)$-Cauchy sequence, and even if it does, it is not clear whether the limit depends on the Cauchy sequence representing $w$.  The next lemma addresses these issues.

\begin{definition} We say that $w\in L^{2}(m)$ is \emph{real-valued} if there is an  $L^{2}(m)$-Cauchy sequence $f_{n}:\Gamma\to \mathbb R$ representing $w$.  We call such a sequence $(f_{n})_{n\in \mathbb N}$ real-valued, as well.  We write $L^{2}_{\mathbb R}(m)$ for the real-valued elements of $L^{2}(m)$.
\end{definition}
\begin{lemma}\label{lem:LevelSetInL2}
	Let $\Gamma$ be a discrete abelian group and $m\in \mathcal M(\Gamma)$.  For each  $w\in L_{\mathbb R}^{2}(m)$, there is a countable set $\Omega(w) \subseteq \mathbb R$ such that for all $a<b\in \mathbb R\setminus \Omega(w)$, every real-valued $L^{2}(m)$-Cauchy sequence $(f_{n})_{n\in \mathbb N}$ representing $w$, and each of the intervals $E=[a,b]$, $(a,b]$, $[a,b)$, $(a,b)$, the sequence $1_{C(f_{n},E)}$ is $L^{2}(m)$-Cauchy.  Furthermore, 
	\begin{enumerate}
	\item\label{item:depend}	 $h_{E}:=\lim_{n\to\infty} 1_{C(f_{n},E)}$ depends on $w$, and not on the sequence representing $w$;
	
	 \item\label{item:endpts} $h_{[a,b]}=h_{[a,b)}=h_{(a,b]}=h_{(a,b)}$ in $L^{2}(m)$;
	
\item\label{item:tauInv} if $w$ is $\tau$-invariant, then so is $h_{E}$.
\end{enumerate}
\end{lemma}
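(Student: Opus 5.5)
The plan is to imitate the classical fact that for a real random variable $w$, the distribution function has at most countably many atoms, and that away from atoms the level sets depend continuously on $w$. Define the ``distribution'' of $w$ as follows. For a real-valued Cauchy sequence $(f_n)$ representing $w$, and a continuous bounded $\phi:\mathbb R\to[0,1]$, the sequence $\phi\circ f_n$ is $L^2(m)$-Cauchy whenever $\phi$ is Lipschitz, since $|\phi(f_n)-\phi(f_{n'})|\le L|f_n-f_{n'}|$. Hence $\Lambda(\phi):=\lim_n m(\phi\circ f_n)$ exists. It is independent of the representing sequence, because two representing sequences are $L^2(m)$-close. Both properties extend from Lipschitz $\phi$ to all $\phi\in C_c(\mathbb R)$ by uniform approximation. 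This gives a positive linear functional, hence a finite positive Borel measure $\lambda_w$ on $\mathbb R$ by Riesz; tightness is not needed since we only use compact intervals. Let $\Omega(w):=\{t:\lambda_w(\{t\})>0\}$, which is countable.

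The key step is the following. Fix $a<b$ outside $\Omega(w)$ and $\epsilon>0$, and choose Lipschitz $\phi_-\le 1_{(a,b)}\le 1_{[a,b]}\le\phi_+$ with $\lambda_w(\phi_+-\phi_-)<\epsilon$. This is possible because $\lambda_w(\{a,b\})=0$: take $\phi_\pm$ to be trapezoids differing only on small neighbourhoods of $a$ and $b$. Now take any representing sequences $(f_n)$ and $(f'_{n})$ of $w$, which may be the same, and any intervals $E,E'$ with endpoints $a,b$. Pointwise,
\[
|1_{C(f_n,E)}-1_{C(f'_{n'},E')}|\le (\phi_+-\phi_-)(f_n)+(\phi_+-\phi_-)(f'_{n'})+|\phi_+(f_n)-\phi_+(f'_{n'})|+|\phi_-(f_n)-\phi_-(f'_{n'})|.
\]
To verify this, note that if the two indicators differ, then either one of the $f$-values lies where $\phi_+-\phi_-$ is large, or the values of $\phi_\pm$ differ substantially. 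One can make this clean by choosing the $\phi_\pm$ with a margin, or by using $1_{\{\phi_+-\phi_-\ge 1/2\}}$ type bounds together with Markov's inequality.

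The left side squared equals itself, since it is an indicator difference. Taking $m$ of the right side, the first two terms have limit at most $\epsilon$. The last two terms tend to $0$ as $n,n'\to\infty$, because $\phi_\pm$ are Lipschitz and $m(|f_n-f'_{n'}|)\le m(|f_n-f'_{n'}|^2)^{1/2}\to0$. This one estimate gives three things simultaneously: the Cauchy property (take $f'=f$ and $E'=E$), the independence of $h_E$ from the representing sequence in (i), and the independence from the choice of endpoint types in (ii).

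The main obstacle is making the pointwise domination rigorous, since the indicator jumps while the $\phi$'s are continuous. The fix I would use is to control separately the sets where $f_n$ lies within $\eta$ of $\{a,b\}$, using $\phi$'s supported there, whose $m$-mass is at most $\epsilon$ in the limit. On the remaining set, a disagreement between the two indicators forces $|f_n-f'_{n'}|\ge\eta$, which has small $m$-measure by Chebyshev.

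For (iii), observe that $\tau_\gamma$ commutes with composition: $\tau_\gamma 1_{C(f_n,E)}=1_{C(\tau_\gamma f_n,E)}$. If $w$ is invariant, then $(\tau_\gamma f_n)$ is another real representing sequence of $\tau_\gamma w=w$. By (i) its level limit is $h_E$, while by continuity of the isometry $\tau_\gamma$ that limit is also $\tau_\gamma h_E$. Hence $\tau_\gamma h_E=h_E$.
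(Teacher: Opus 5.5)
Your proof is correct, but it takes a different route from the paper's.

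\textbf{The paper's route.} The paper's auxiliary lemma works with $[0,1]$-valued representatives and forms distribution functions $\phi_n(x)=m(f_n^{-1}[0,x])$. It extracts a pointwise limit $\phi$ along a subsequence via Helly's selection lemma and takes $\Omega$ to be the discontinuity set of $\phi$. A Markov-type estimate, $m(C_n(b)\setminus C_{n'}(b+\delta))\to 0$, then upgrades this to convergence of the full sequence $\phi_n$ and the Cauchy property of $1_{C_n(b)}$ at continuity points. The remaining pieces are handled separately: independence of $\Omega$ from the representing sequence by interleaving two representing sequences, the endpoint statement from $\phi(b+\delta)-\phi(b-\delta)$, and $\tau$-invariance by a direct estimate.

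\textbf{Your route.} You build the distribution of $w$ intrinsically, as the Radon measure $\lambda_w$ obtained by applying Riesz to $\phi\mapsto\lim_n m(\phi\circ f_n)$ on $C_c(\mathbb R)$. That functional is already independent of the representing sequence, so $\Omega(w)$ is too, and you need neither subsequence extraction nor interleaving. A single sandwich estimate then gives the Cauchy property, (i) and (ii) at once. Item (iii) follows from (i), the identity $\tau_\gamma 1_{C(f_n,E)}=1_{C(\tau_\gamma f_n,E)}$, and the fact that $\tau_\gamma$ is an isometry of $L^2(m)$.

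\textbf{What each buys.} Your version handles arbitrary real-valued, not necessarily uniformly bounded, representatives directly, which the stated lemma requires. The paper's auxiliary lemma assumes $f_n:\Gamma\to[0,1]$ and leaves that reduction implicit. The cost is invoking the Riesz--Markov theorem in place of the elementary Helly lemma.

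\textbf{The step you flagged.} The pointwise domination you call the main obstacle is immediate. Since $\phi_-\le 1_E,1_{E'}\le\phi_+$, for all real $x,y$ you have $1_E(x)-1_{E'}(y)\le\phi_+(x)-\phi_-(y)\le(\phi_+-\phi_-)(x)+|\phi_-(x)-\phi_-(y)|$, and the symmetric bound holds for $1_{E'}(y)-1_E(x)$. So your displayed inequality holds exactly as written, even without the $\phi_+$-difference term. No $\eta$-margin or Chebyshev fix is needed.
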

We prove Lemma \ref{lem:LevelSetInL2} in \S\ref{sec:LevelProof}. For $w\in L_{\mathbb R}^{2}(m)$ represented by a real-valued Cauchy sequence $(f_{n})_{n\in \mathbb N}$ and a given $a<b\in \mathbb R\setminus \Omega(w)$, we define
\begin{equation}
	1_{\{a\leq w\leq b\}}:=\lim_{n\to\infty} 1_{C(f_{n},[a,b])}.
\end{equation}
Note that $m(f_{n}1_{\{a\leq f_{n} \leq b\}})\leq bm(1_{\{a\leq f_{n} \leq b\}})$. From such observations and Lemma \ref{lem:LevelSetInL2}, we conclude
\begin{equation}\label{eq:TrivialBound}
	a m(1_{\{a\leq w\leq b\}})	\leq m(w1_{\{a\leq w\leq b\}})\leq bm(1_{\{a\leq w\leq b\}}) \qquad \text{for all } a<b\in \mathbb R\setminus \Omega(w).
\end{equation}
The next lemma packages this estimate for subsequent applications.
\begin{lemma}\label{lem:Riemann}
	Let $w\in L_{\mathbb R}^{2}(m)$, let $0\leq a<b\in \mathbb R\setminus \Omega(w)$, and let $v=1_{\{a\leq w\leq b\}}$.  Then
	\begin{equation}\label{eq:Riemann1}
		m(v)-\frac{1}{b}m(wv)\leq \Big(\frac{b-a}{b}\Big)m(v),
	\end{equation}
and if $g\in \ell^{\infty}(\Gamma)$, then
\begin{equation}\label{eq:Riemann2}
		|m(gv)-\frac{1}{b}m(gwv)|\leq \|g\|_{\infty}\Big(\frac{b-a}{b}\Big)m(v).
\end{equation}
\end{lemma}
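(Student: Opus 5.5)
The plan is to derive both inequalities directly from the trivial bound (\ref{eq:TrivialBound}), applied at the level of the approximating functions. The only structural input is that $v$ is an indicator element. As such it behaves like $1_{\{a\le w\le b\}}$: we have $v^{2}=v$ in $L^{2}(m)$, and $wv$ is ``supported'' where $a\le w\le b$.

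For (\ref{eq:Riemann1}), rearrange it as $\frac{a}{b}m(v)\le \frac{1}{b}m(wv)$. Multiplying by $b>0$, this is exactly the left inequality $a\,m(v)\le m(wv)$ of (\ref{eq:TrivialBound}), since $v=1_{\{a\le w\le b\}}$.

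For (\ref{eq:Riemann2}), fix a real-valued Cauchy sequence $(f_{n})$ representing $w$. By Lemma \ref{lem:LevelSetInL2}, $v_{n}:=1_{C(f_{n},[a,b])}$ converges to $v$ in $L^{2}(m)$. I would first reduce to functions: since $g$ is bounded, $gv_{n}\to gv$, and $gf_{n}v_{n}\to gwv$ in $L^{2}(m)$. The second convergence is a product of Cauchy sequences. Here $f_{n}v_{n}$ is uniformly bounded by $\max(|a|,|b|)$, so the remark on products of uniformly bounded Cauchy sequences applies after writing
\[
f_{n}v_{n}-f_{n'}v_{n'}=(f_{n}-f_{n'})v_{n}+f_{n'}(v_{n}-v_{n'}).
\]
The term $f_{n'}(v_{n}-v_{n'})$ is the delicate one. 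I would handle it by truncating: replace $f_{n'}$ by its truncation $\phi(f_{n'})$ with $\phi(t)=\min(\max(t,a),b)$. This is harmless because on $C(f_{n'},[a,b])$ we have $f_{n'}=\phi(f_{n'})$. The sequence $\phi(f_{n})$ still represents the truncation of $w$, because $\phi$ is $1$-Lipschitz. Then use $f_{n}v_{n}=\phi(f_{n})v_{n}$, so everything is uniformly bounded and the product remark gives $m(gf_{n}v_{n})\to m(gwv)$.

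It then suffices to bound $|m(gv_{n})-\frac{1}{b}m(gf_{n}v_{n})|$ pointwise and pass to the limit. Pointwise,
\[
\Bigl|g v_{n}-\tfrac1b g f_{n}v_{n}\Bigr| = |g|\,\Bigl(1-\tfrac{f_{n}}{b}\Bigr)v_{n}\le \|g\|_{\infty}\tfrac{b-a}{b}\,v_{n},
\]
since $a\le f_{n}\le b$ on the support of $v_{n}$ and $a\ge 0$. Positivity of $m$ (applied to real and imaginary parts, or via $|m(h)|\le m(|h|)$ for means) gives
\[
\Bigl|m(gv_{n})-\tfrac1b m(gf_{n}v_{n})\Bigr|\le \|g\|_{\infty}\tfrac{b-a}{b}m(v_{n}),
\]
and letting $n\to\infty$ yields (\ref{eq:Riemann2}).

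Incidentally, taking $g=1_{\Gamma}$ in this argument reproves (\ref{eq:Riemann1}), since then the expression inside the absolute value is nonnegative.

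The main obstacle is justifying the limit $m(gf_{n}v_{n})\to m(gwv)$, i.e.\ the convergence of the product despite $f_{n}$ not being uniformly bounded. The truncation device above is what I would try first, checking that $\Omega(w)$ is unaffected by truncation.
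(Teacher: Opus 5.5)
Your proof is correct and is essentially the paper's: (\ref{eq:Riemann1}) is the left half of (\ref{eq:TrivialBound}), and (\ref{eq:Riemann2}) rests on the same domination $|g(v-b^{-1}wv)|\le \|g\|_{\infty}(v-b^{-1}wv)\le \|g\|_{\infty}\frac{b-a}{b}v$. The difference is that the paper applies this domination directly to the $L^{2}(m)$ elements and then quotes (\ref{eq:Riemann1}), whereas you apply it to the approximants $v_n$, $f_nv_n$ and pass to the limit, which your truncation justifies; you need not check $\Omega$ for the truncated element, since the $v_n$ are level sets of the untruncated $f_n$.
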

\begin{proof}
	To prove (\ref{eq:Riemann1}), note that \[bm(v)-m(wv)=m((b-w)v)\leq m((b-a)v).\]  The last inequality follows from (\ref{eq:TrivialBound}).  Dividing by $b$ yields (\ref{eq:Riemann1}). Inequality (\ref{eq:Riemann2}) follows by linearity: 
	\begin{align*}m(gv)-\frac{1}{b}m(gwv)=m(g(v-b^{-1}wv))&\leq \|g\|_{\infty}m(|v-b^{-1}wv|)\\
		&=\|g\|_{\infty}m(v-b^{-1}wv)\\
		&=\|g\|_{\infty}(m(v)-b^{-1}m(wv))\\
		&\leq \|g\|_{\infty}  \Big(\frac{b-a}{b}\Big)m(v),
	\end{align*}
	by (\ref{eq:Riemann1}).
\end{proof}

\begin{lemma}\label{lem:LevelToIndicator}
	Let $\Gamma$ be a discrete abelian group, $m\in \mathcal M(\Gamma)$, $w\in L_{\mathbb R}^{2}(m)$, and assume that for all $\delta>0$ with $1-\delta$, $1+\delta \notin \Omega(w)$,  $w= w1_{\{1-\delta \leq w \leq 1+\delta\}}$.  Then $w$ is an indicator element.
\end{lemma}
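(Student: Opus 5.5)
The plan is to exhibit, for each $n$, an explicit set $C_{n}\subseteq\Gamma$ with $1_{C_{n}}\to w$ in $L^{2}(m)$. Fix a real-valued Cauchy sequence $(f_{n})_{n\in\mathbb N}$ representing $w$. Since $\Omega(w)$ is countable, we can choose $\delta_{j}\downarrow 0$ with $1\pm\delta_{j}\notin\Omega(w)$. By Lemma \ref{lem:LevelSetInL2}, $h_{j}:=1_{\{1-\delta_{j}\leq w\leq 1+\delta_{j}\}}=\lim_{n}1_{C(f_{n},[1-\delta_{j},1+\delta_{j}])}$ is a limit of indicator functions, and hence an indicator element. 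If $w=\lim_{j} h_{j}$ in $L^{2}(m)$, then a diagonal choice of sets $C(f_{n_{j}},[1-\delta_{j},1+\delta_{j}])$ gives a single sequence of indicators converging to $w$. So everything reduces to the estimate $\|w-h_{j}\|_{L^{2}(m)}\to 0$.

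For this estimate, use the hypothesis $w=wh_{j}$ to write $w-h_{j}=(w-1)h_{j}$. We then bound $\|(w-1)h_{j}\|^{2}=\lim_{n} m\bigl(|f_{n}-1|^{2}1_{C(f_{n},[1-\delta_{j},1+\delta_{j}])}\bigr)\leq\delta_{j}^{2}\limsup_{n} m(1_{C(f_{n},[1-\delta_{j},1+\delta_{j}])})\leq\delta_{j}^{2}$. The first equality holds because $(f_{n}-1)1_{C(f_{n},E)}$ represents $(w-1)h_{j}$. This is the one point needing care, because products in $L^{2}(m)$ were only defined for uniformly bounded Cauchy sequences. I would handle it by first truncating: replace $f_{n}$ by $\max(\min(f_{n},2),-1)$. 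Truncation is a $1$-Lipschitz operation, so it preserves the Cauchy property. It does not change the level sets $C(f_{n},E)$ for $E\subseteq[0,2]$, so in particular it does not change $h_{j}$ when $\delta_{j}<1$. It also represents a real element $w'$, and we need $w'=w$.

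To see $w'=w$, note that $m(|f_{n}-f_{n}'|^{2})$ is controlled by the mass of $\{f_{n}\notin[-1,2]\}$ weighted by $|f_{n}|^{2}$. Using $w=wh_{j}$ together with $(f_{n}-f_{n}1_{C(f_{n},[1-\delta,1+\delta])})\to 0$, we get $m(|f_{n}|^{2}1_{\{f_{n}\notin[1-\delta,1+\delta]\}})\to 0$, and hence the truncation error tends to $0$. Here we again use that the product $wh_{j}$ is represented by $f_{n}1_{C(f_{n},E)}$. I will justify this directly: if $g_{n}\to h_{j}$ with $g_{n}$ indicators and $f_{n}\to w$, then $\|f_{n}g_{n}-f_{n}'g_{n}'\|$ can be estimated once $f_{n}$ is bounded, so I bootstrap by doing the truncation first on a bounded cutoff $[-M,M]$ and letting $M\to\infty$. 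Alternatively, and more simply, I interpret the hypothesis $w=wh$ via exactly the representative $f_{n}1_{C(f_{n},E)}$, which Lemma \ref{lem:LevelSetInL2}(i) makes well defined.

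I expect this last bookkeeping step to be the main obstacle: the multiplication $wh$ for unbounded representatives, and the identification of $w-h_{j}$ with $(w-1)h_{j}$ represented by $(f_{n}-1)1_{C(f_{n},E)}$. Once that is settled, the bound $\|w-h_{j}\|\leq\delta_{j}$ is immediate, and closedness of the set of indicator elements under $L^{2}(m)$ limits (by the diagonal argument) finishes the proof.
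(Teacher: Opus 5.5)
Your proof is correct. Its core estimate coincides with the paper's: both use the hypothesis $w=wh_\delta$ to write $w-h_\delta=(w-1)h_\delta$ and bound this by $\delta$. The paper measures this with $m(|\cdot|)$; you use the $L^{2}(m)$ norm.

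Where you differ is in how you conclude. The paper fixes one admissible $\delta_0<1/2$. It first shows $h_\delta=h_{\delta_0}$ in $L^{2}(m)$ for every smaller admissible $\delta$, using $wh_\delta=w=wh_{\delta_0}$ together with a positive lower bound for $|w|$ on the difference of the two level sets. Letting $\delta\to 0$ then gives the exact identity $w=1_{\{1-\delta_0\leq w\leq 1+\delta_0\}}$. So $w$ is literally one of the level-set indicators of Lemma \ref{lem:LevelSetInL2}.

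You skip that comparison of level sets. Instead you use that indicator elements form a closed subset of $L^{2}(m)$, via your diagonal choice of sets $C(f_{n_j},[1-\delta_j,1+\delta_j])$. Your route is shorter; the paper's gives the slightly more explicit conclusion.

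On the bookkeeping you worried about, the truncation to $[-1,2]$ is not needed. As written it also only preserves the level sets $C(f_n,E)$ for $E\subseteq(-1,2)$, not for all $E\subseteq[0,2]$. Your ``simpler'' reading suffices. We have $f_n1_{C(f_n,E_j)}=\tilde f_n1_{C(f_n,E_j)}$, where $\tilde f_n$ is $f_n$ clamped to $E_j=[1-\delta_j,1+\delta_j]$. This is a product of uniformly bounded Cauchy sequences, and its limit does not depend on the representative of $w$. It is exactly the product the paper uses implicitly. Then $(f_n-1)1_{C(f_n,E_j)}$ represents $w-h_j$ and is pointwise bounded by $\delta_j$.
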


\begin{proof}
		 Fix $0< \delta< \delta_{0} < 1/2$ so that $1-\delta, 1+\delta, 1-\delta_{0}$, $1+\delta_{0} \notin \Omega(w)$.  For $c\in \{\delta,\delta_{0}\}$, let $h_{c}=1_{\{1-c\leq w \leq 1+c\}}$. We will prove that $w=h_{\delta_{0}}$.	First we claim that 
	\begin{equation}\label{eq:h1/2hdelta}
		m(|h_{\delta_{0}}-h_{\delta}|)=0.
	\end{equation}
	To prove this, note that $m(|wh_{\delta_{0}}-wh_{\delta}|)\geq (\delta_{0}-\delta)m(|h_{\delta_{0}}-h_{\delta}|)$, and $wh_{\delta}=wh_{\delta_{0}}$ in $L^{2}(m)$.  Also
	\begin{equation}\label{eq:wMinusg}
		w-h_{\delta_{0}}= w-  wh_{\delta} +wh_{\delta} - h_{\delta}+h_{\delta}-h_{\delta_{0}}.
	\end{equation}
	We have $w=wh_{\delta}$ by assumption, and $m(|wh_{\delta} - h_{\delta}|)\leq m(|w-1|h_{\delta})\leq \delta$.  Combining these with (\ref{eq:wMinusg}), (\ref{eq:h1/2hdelta}), and the triangle inequality, we get $m(|w-h_{\delta_{0}}|)\leq \delta$.  Letting $\delta\to 0$, we get $m(|w-h_{\delta_{0}}|)=0$.  Thus $w=h_{\delta_{0}}$ in $L^{2}(m)$.
\end{proof}

\subsection{Nonnegative elements}

We say $w\in L^{2}(m)$ is \emph{nonnegative}, and write $w\geq 0$, if there is an $L^{2}(m)$-Cauchy sequence $(f_{n})_{n\in \mathbb N}$ representing $w$ such that $f_{n}(\gamma)\geq 0$ for all $\gamma \in \Gamma$ and all $n\in \mathbb N$.  We write $v\leq w$ if $w-v$ is nonnegative.  We write $0\leq w\leq 1_{\Gamma}$ if there is an $L^{2}(m)$-Cauchy sequence of functions $f_{n}:\Gamma \to [0,1]$ representing $w$.

\begin{lemma}\label{lem:to01}
	With $\Gamma$, $m$, and $w$ as in Lemma \ref{lem:Nonneg}, we have $0\leq w\leq 1_{\Gamma}$ if and only if $w$ and $1_{\Gamma}-w$ are both nonnegative. 
\end{lemma}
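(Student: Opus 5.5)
The forward direction is immediate. If $w$ is represented by an $L^{2}(m)$-Cauchy sequence of functions $f_{n}:\Gamma\to[0,1]$, then $(f_{n})_{n\in\mathbb N}$ witnesses $w\geq 0$. The sequence $(1_{\Gamma}-f_{n})_{n\in\mathbb N}$ also consists of nonnegative functions and represents $1_{\Gamma}-w$, so $1_{\Gamma}-w\geq 0$.

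For the converse, fix an $L^{2}(m)$-Cauchy sequence $f_{n}\geq 0$ representing $w$ and another $g_{n}\geq 0$ representing $1_{\Gamma}-w$. Then $u_{n}:=1_{\Gamma}-g_{n}$ represents $w$ and satisfies $u_{n}\leq 1$ pointwise. The plan is to build a single representative with values in $[0,1]$ by truncation. A first candidate is $h_{n}:=\max(0,\min(1,f_{n}))$, but it is not obvious that $h_{n}\to w$, so I would instead use
\[
h_{n}:=\min(f_{n},1_{\Gamma})=\min(f_{n},\max(u_{n},f_{n}))
\]
and combine the two representations through the pointwise inequality $|\min(f_{n},1)-f_{n}|=(f_{n}-1)^{+}\leq (f_{n}-u_{n})^{+}$, which holds because $u_{n}\leq 1$. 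Hence
\[
m(|h_{n}-f_{n}|^{2})\leq m(|f_{n}-u_{n}|^{2}).
\]
The right-hand side tends to $0$, since $f_{n}$ and $u_{n}$ represent the same element $w$; this is the distance between their classes in $L^{2}(m)$.

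It follows that $(h_{n})_{n\in\mathbb N}$ is $L^{2}(m)$-Cauchy by the triangle inequality and represents $w$. Since $0\leq h_{n}\leq 1$ pointwise, this gives $0\leq w\leq 1_{\Gamma}$.

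The only point requiring care is justifying that $m(|f_{n}-u_{n}|^{2})\to 0$ for two Cauchy sequences representing the same element of the completion. This follows from the definition of the completion metric: the $L^{2}(m)$-distance between $w$ and the class of $f_{n}$ tends to $0$, and likewise for $u_{n}$, so the distance between the classes of $f_{n}$ and $u_{n}$ tends to $0$ by the triangle inequality. The rest is a pointwise elementary inequality, so I do not expect a serious obstacle.
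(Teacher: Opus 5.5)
Your proof is correct and follows essentially the paper's argument: take a nonnegative representative $f_n$ and a representative bounded above by $1$, truncate $f_n$, and dominate $|f_n-h_n|$ pointwise by the gap between the two representatives. Your $h_n=\min(f_n,1_\Gamma)$ is in fact a cleaner choice than the paper's three-case definition.

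One small slip: the displayed identity $\min(f_{n},1_{\Gamma})=\min(f_{n},\max(u_{n},f_{n}))$ is false, since the right side is just $f_{n}$. Also, your ``first candidate'' $\max(0,\min(1,f_n))$ is the same function as $\min(f_n,1_\Gamma)$ because $f_n\geq 0$. Neither remark is used in the actual estimate.
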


\begin{proof}
	The implication $0\leq w\leq 1_{\Gamma}$ $\implies$ $w$ and $1_{\Gamma}-w$ are both nonnegative is straightforward.  To prove the converse, assume $w$ and $1_{\Gamma}-w$ are both nonnegative.  Then there are $L^{2}(m)$-Cauchy sequences $(f_{n})_{n\in \mathbb N}$ and $(g_{n})_{n\in \mathbb N}$ representing $w$ such that $f_{n}(\gamma)\geq 0$ and $g_{n}(\gamma)\leq 1$ for all $n\in \mathbb N$, $\gamma\in \Gamma$.  Defining $h_{n}:\Gamma\to [0,1]$ by $h_{n}(\gamma)=0$ if $g_{n}(\gamma)<0$, $h_{n}(\gamma)=1$ if $f_{n}(\gamma)>1$, and $h_{n}(\gamma)=f_{n}(\gamma)$ otherwise, it is easy to verify that $\|f_{n}-h_{n}\|_{L^{2}(m)}\leq \|f_{n}-g_{n}\|_{L^{2}(m)}$. Thus $(h_{n})_{n\in \mathbb N}$ also represents $w$. \end{proof}

\begin{lemma}\label{lem:Nonneg}
Let $\Gamma$ be a discrete abelian group,  $m\in \mathcal M(\Gamma)$, and $w\in L^{2}(m)$.	The following are equivalent.
	\begin{enumerate}
		\item[(i)] $w$ is nonnegative.
		\item[(ii)] $\langle f,w\rangle_{L^{2}(m)}\geq 0$ for every $f:\Gamma \to [0,1]$.
		\item[(iii)] $\langle v,w\rangle_{L^{2}(m)}\geq 0$ for every nonnegative $v\in L^{2}(m)$.
	\end{enumerate}
\end{lemma}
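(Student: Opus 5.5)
The plan is to prove the cycle (i)$\Rightarrow$(ii)$\Rightarrow$(iii)$\Rightarrow$(i), working throughout with Cauchy sequences of bounded functions and using continuity of the inner product on $L^{2}(m)$.

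\emph{(i)$\Rightarrow$(ii).} Fix a Cauchy sequence $f_{n}\geq 0$ representing $w$ and a function $f:\Gamma\to[0,1]$. Then $\langle f,w\rangle_{L^{2}(m)}=\lim_{n} m(f f_{n})$. Each product $ff_{n}$ is a nonnegative bounded function, so $m(ff_{n})\geq 0$ by positivity of the mean, and the limit is $\geq 0$.

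\emph{(ii)$\Rightarrow$(iii).} Let $v\geq 0$ be represented by nonnegative bounded functions $g_{n}$. Each $g_{n}$ is a positive multiple of a function with values in $[0,1]$, namely $g_{n}=\|g_{n}\|_{\infty}\cdot(g_{n}/\|g_{n}\|_{\infty})$. By (ii) and linearity, $\langle g_{n},w\rangle\geq 0$. Continuity of the inner product then gives $\langle v,w\rangle=\lim_{n}\langle g_{n},w\rangle\geq 0$.

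\emph{(iii)$\Rightarrow$(i).} This is the main step. First, (iii) applied to $v=1_{\Gamma}$ and to $v=1_{\Gamma}-$ type test functions forces $w$ to be real-valued. More precisely, take a Cauchy sequence $f_{n}$ representing $w$. Testing against nonnegative functions of the form $1_{\{\operatorname{Im} f_{n}>0\}}$ and $1_{\{\operatorname{Im} f_{n}<0\}}$ shows that $m(|\operatorname{Im} f_{n}|^{2})\to 0$. So we may replace $f_{n}$ by $\operatorname{Re} f_{n}$, which is still Cauchy because $|\operatorname{Re} a-\operatorname{Re} b|\leq|a-b|$.

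Next, set $f_{n}^{+}=\max(f_{n},0)$ and $f_{n}^{-}=\max(-f_{n},0)$. The map $t\mapsto t^{+}$ is $1$-Lipschitz, so $(f_{n}^{+})$ is Cauchy and represents some nonnegative element $w^{+}$. It then suffices to show $m(|f_{n}^{-}|^{2})\to 0$, since then $w=w^{+}\geq 0$.

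To prove this, test (iii) against $v_{n}:=f_{n}^{-}$, which is nonnegative. Using $f_{n}f_{n}^{-}=-(f_{n}^{-})^{2}$, we get
\[
0\leq \langle f_{n}^{-},w\rangle = m(f_{n}^{-}f_{n})+\langle f_{n}^{-},w-f_{n}\rangle \leq -m((f_{n}^{-})^{2})+\|f_{n}^{-}\|\,\|w-f_{n}\|.
\]
This gives $\|f_{n}^{-}\|^{2}\leq\|f_{n}^{-}\|\,\|w-f_{n}\|$, hence $\|f_{n}^{-}\|\leq\|w-f_{n}\|\to 0$.

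The imaginary-part reduction is handled the same way, with $v=(\operatorname{Im} f_{n})^{\pm}$ applied to both $w$ and $i$-rotations. Here I expect the only fiddly point to be the bookkeeping of complex conjugation in the inner product. If that becomes messy, I would instead restrict the lemma's use to real-valued $w$, which is all that the later applications require.
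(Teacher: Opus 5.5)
Your argument is correct and is essentially the paper's: for (iii)$\Rightarrow$(i) both test against the negative parts $f_n^-$ and use $f_nf_n^-=-(f_n^-)^2$. The paper pairs $w$ once with the single limit $\lim_n f_n^-$, while you pair with each $f_n^-$ and conclude via Cauchy--Schwarz that $\|f_n^-\|\le\|w-f_n\|$.

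Your preliminary reduction to real-valued $w$ supplies a step the paper silently assumes, and you should keep it rather than fall back on real $w$. Lemma~\ref{lem:AC} applies this lemma to a Riesz representer that is not known in advance to be real. For that reduction, use the tests $(\operatorname{Im} f_n)^{\pm}$ with the same Cauchy--Schwarz trick; the indicator tests alone only give $m(|\operatorname{Im} f_n|)\to 0$, because the $f_n$ need not be uniformly bounded.
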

\begin{proof}
	(i) $\implies$ (ii) and (ii)$\implies$(iii) are immediate from the definitions.  To prove (iii)$\implies$(i), suppose $w$ satisfies (iii).  Let $(f_{n})_{n\in \mathbb N}$ be an $L^{2}(m)$-Cauchy sequence of real-valued functions representing $w$.  Consider the truncations $g_{n}$ defined by $g_{n}(\gamma)=f_{n}(\gamma)$ if $f_{n}(\gamma)<0$, $g_{n}(\gamma)=0$ if $f_{n}(\gamma)>0$.  We will prove that \begin{equation}\label{eq:gnorm} \lim_{n\to\infty} \|g_{n}\|_{L^{2}(m)}=0,
	\end{equation} which implies $w=\lim_{n\to\infty} f_{n}-g_{n}$.  Since $f_{n}-g_{n}$ is nonnegative, this proves (i) holds.  
	
	Note that $g_{n}=\psi\circ f_{n}$, where $\psi(x)=x$ if $x\leq 0$, $\psi(x)=0$ if $x>0$.  Since $\psi$ is Lipschitz and $(f_{n})_{n\in \mathbb N}$ is $L^{2}(m)$-Cauchy, we get that $g_{n}$ is $L^{2}(m)$-Cauchy, as well.  Let $v=\lim_{n\to\infty} -g_{n}$, so that $v$ is nonnegative. Condition (iii) then implies $\langle v,w\rangle_{L^{2}(m)} \geq 0$. We also have $-g_{n}(\gamma)f_{n}(\gamma)\leq 0$ for all $\gamma\in \Gamma$, so $m(-g_{n}f_{n})\leq 0$ for all $n$.  Thus $\langle v,w\rangle_{L^{2}(m)}\leq 0$. Now $-g_{n}f_{n}=|g_{n}|^{2}$, so we get $\lim_{n\to\infty} m(|g_{n}|^{2})=0$.  This implies (\ref{eq:gnorm}) and completes the proof.
\end{proof}

\subsection{Conservation of upper Banach density}

\begin{lemma}\label{lem:dStarConvex}  Let $\Gamma$ be a discrete abelian group, and 
 $m$, $m_{j}\in \mathcal M_{\tau}(\Gamma)$,  $c_{j}>0$,  satisfy $\sum_{j} c_{j}=1$ and $m = \sum_{j} c_{j}m_{j} $. If $D\subseteq \Gamma$  satisfies $m(D)=\dens^{*}(D)$ then $m_{j}(D)=\dens^{*}(D)$ for each $j$.
\end{lemma}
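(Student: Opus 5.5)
This is a direct averaging argument that uses only the definition of $\dens^{*}$ as a supremum over invariant means (Definition \ref{def:UBD}). Each $m_{j}$ is an invariant mean, so $m_{j}(D)\leq \dens^{*}(D)$ for every $j$. Evaluating the decomposition $m=\sum_j c_j m_j$ on $1_{D}$ gives
\[
\dens^{*}(D)=m(D)=\sum_{j}c_{j}m_{j}(D)\leq \sum_{j}c_{j}\dens^{*}(D)=\dens^{*}(D).
\]
Hence equality holds throughout, and therefore
\[
\sum_{j}c_{j}\bigl(\dens^{*}(D)-m_{j}(D)\bigr)=0 .
\]
This is a sum of nonnegative terms $c_{j}(\dens^{*}(D)-m_{j}(D))$. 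Since every $c_{j}>0$, each term must vanish, so $m_{j}(D)=\dens^{*}(D)$ for all $j$.

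The one point needing care is the meaning of $m=\sum_{j}c_{j}m_{j}$ when the index set is countably infinite. I read it as pointwise on $\ell^{\infty}(\Gamma)$, that is, $m(f)=\sum_{j}c_{j}m_{j}(f)$ for each $f$. The series converges absolutely because $|m_{j}(f)|\leq\|f\|_{\infty}$ and $\sum_{j} c_{j}=1$. Applying this with $f=1_{D}$ gives the display above, and a convergent series of nonnegative terms with sum zero has every term zero. So the argument works for both finite and countable families, and nothing beyond the definition of upper Banach density is needed.
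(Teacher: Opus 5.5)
Your proof is correct and is essentially the paper's argument. Both use only $m_j(D)\le \dens^{*}(D)$, which comes from the definition of upper Banach density, together with $c_j>0$; the paper isolates a single index and rearranges $\dens^{*}(D)\le c_1 m_1(D)+\sum_{j\ge 2}c_j\dens^{*}(D)$, while you sum the nonnegative deficits $c_j(\dens^{*}(D)-m_j(D))$ and conclude that each one vanishes (your remark on countable families is a harmless extra, since the paper applies the lemma only to finite combinations).
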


\begin{proof}
	Let $D$, $m$, and $m_{j}$ be as in the hypothesis.  By symmetry it suffices to prove that $m_{1}(D)=\dens^{*}(D)$. Since $\dens^{*}(D)=\sup \{\eta(D): \eta\in  \mathcal M_{\tau}(\Gamma)\}$, the hypothesis implies $m_{j}(D)\leq  \dens^{*}(D)$ for each $j$.  Thus $\dens^{*}(D)=\sum_{j} c_{j}m_{j}(D)\leq c_{1}m_{1}(D)+\sum_{j\geq 2} c_{j}\dens^{*}(D)$, which implies $\dens^{*}(D)-\sum_{j\geq 2} c_{j}\dens^{*}(D) \leq c_{1}m_{1}(D)$.  Thus $c_{1}\dens^{*}(D)\leq c_{1}m_{1}(D)$, which implies $\dens^{*}(D)\leq m_{1}(D)$.  By definition of $\dens^{*}$, this implies $m_{1}(D)=\dens^{*}(D)$.
\end{proof}

\begin{lemma}\label{lem:ExtremityOfdStar}
	Let $\Gamma$ be a discrete abelian group, $m\in \mathcal M_{\tau}(\Gamma)$, $D\subseteq \Gamma$.   If $w\in L^{2}(m)$ is $\tau$-invariant and nonnegative, and
	\begin{equation}\label{eq:wRealizesd*E}
		\langle 1_{D},w\rangle _{L^{2}(m)}=m(w)\dens^{*}(D),
	\end{equation} then for every $\beta > \alpha >0$, $\alpha, \beta \notin \Omega(w)$, we have $\langle 1_{D}, 1_{\{\alpha \leq w\leq \beta\}}\rangle_{L^{2}(m)} = m(1_{\{\alpha \leq w\leq \beta\}})\dens^{*}(D)$.
\end{lemma}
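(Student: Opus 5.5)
We want to show that the level set $v:=1_{\{\alpha\leq w\leq \beta\}}$ inherits the extremal property (\ref{eq:wRealizesd*E}). The idea is to mimic Lemma \ref{lem:dStarConvex}: split $w$ into the part living on $\{\alpha\le w\le\beta\}$ and the rest, regard each piece as giving a (possibly degenerate) invariant mean, and use the fact that no invariant mean can assign $D$ more than $\dens^{*}(D)$.

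First, for every $\tau$-invariant nonnegative $u\in L^{2}(m)$ with $m(u)>0$, I claim the functional $f\mapsto m(u)^{-1}\langle f,u\rangle_{L^{2}(m)}$ is an invariant mean. Positivity follows from Lemma \ref{lem:Nonneg}, and normalization is immediate. For invariance, the isometries $\tau_\gamma$ preserve $m$ and inner products, so
\[
\langle \tau_\gamma f,u\rangle=\langle \tau_\gamma f,\tau_\gamma u\rangle=\langle f,u\rangle .
\]
Consequently
\[
\langle 1_D,u\rangle\le \dens^{*}(D)\,m(u)
\]
for every such $u$, and this also holds trivially when $m(u)=0$.

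Next, decompose $w$. Set $v=1_{\{\alpha\le w\le\beta\}}$, which is $\tau$-invariant by Lemma \ref{lem:LevelSetInL2}\ref{item:tauInv}, and write $w=wv+w(1_\Gamma-v)$. Both pieces are $\tau$-invariant, since the product of invariant elements is invariant. Both are nonnegative: represent $w$ by nonnegative truncated functions $f_n$ and $v$ by $1_{C(f_n,[\alpha,\beta])}$, so the products $f_n1_{C}$ and $f_n(1-1_C)$ are nonnegative (boundedness of $f_n$ can be arranged by a further truncation). Applying the inequality above to each piece, summing, and comparing with the equality (\ref{eq:wRealizesd*E}) forces
\[
\langle 1_D,wv\rangle=\dens^{*}(D)\,m(wv).
\]

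Third, pass from $wv$ to $v$. The obstacle is that $wv$ is only comparable to a multiple of $v$, not equal to one. I would subdivide $[\alpha,\beta]$ into a fine partition $\alpha=t_0<t_1<\dots<t_N=\beta$ with all $t_i\notin\Omega(w)$, which is possible since $\Omega(w)$ is countable. Let $v_i=1_{\{t_{i-1}\le w\le t_i\}}$. By Lemma \ref{lem:LevelSetInL2}\ref{item:endpts} and the additivity of level sets (checked on the representing sequences), $v=\sum_i v_i$. Running the splitting argument again with $w v_i$ gives
\[
\langle 1_D,wv_i\rangle=\dens^{*}(D)\,m(wv_i)
\]
for each $i$. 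Lemma \ref{lem:Riemann}, applied with $g=1_D$ and with $g=1_\Gamma$ and $b=t_i$, shows that $\langle 1_D,v_i\rangle$ and $t_i^{-1}\langle 1_D,wv_i\rangle$ differ by at most $\frac{t_i-t_{i-1}}{t_i}m(v_i)$. The same bound holds for $m(v_i)$ versus $t_i^{-1}m(wv_i)$. Hence
\[
\bigl|\langle 1_D,v_i\rangle-\dens^{*}(D)\,m(v_i)\bigr|\le 2\,\tfrac{t_i-t_{i-1}}{\alpha}\,m(v_i).
\]
Summing over $i$ gives an error of at most $2\,\mathrm{mesh}/\alpha$. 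Letting the mesh tend to $0$ (which uses $\alpha>0$) yields the claim.

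The main technical care is in justifying that products and sums of these level-set elements behave as expected in $L^{2}(m)$, where only Cauchy representatives are available. I would handle this by working with the representing sequences $f_n$ throughout.
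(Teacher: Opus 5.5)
Your proposal is correct and follows essentially the same route as the paper. The paper likewise cuts $w$ into pieces $wv_I$ along a fine partition of $[\alpha,\beta]$ with endpoints outside $\Omega(w)$. It then uses the maximality of $\dens^{*}(D)$ over invariant means (Lemma \ref{lem:dStarConvex}, the convex-combination form of your inequality $\langle 1_D,u\rangle\le \dens^{*}(D)m(u)$) to get $\langle 1_D,wv_I\rangle=\dens^{*}(D)m(wv_I)$ for each piece. Finally it passes from $wv_I$ to $v_I$ via Lemma \ref{lem:Riemann}, with total error at most $2\,\mathrm{mesh}/\alpha$.

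Your two-piece splitting $w=wv_i+w(1_\Gamma-v_i)$ is, if anything, a bit cleaner. The paper's decomposition $w=\sum_{I\in\mathcal P}w_I$ only uses intervals in $[\alpha,\infty)$, so as written it drops the part of $w$ below $\alpha$.
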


\begin{proof}
	When $m(w)=0$ the conclusion follows trivially, so assume $m(w)>0$. 	Let $m'$ be the invariant mean given by $m'(f):=m(w)^{-1}m(fw)$, so that
	\begin{equation}\label{eq:mPrimeExtreme}
		m'(1_{D})=\dens^{*}(D).
	\end{equation} 	Let $\beta > \alpha >0$, $\alpha, \beta \notin \Omega(w)$. Fix $a_{i}$ with $\alpha=a_{0}<a_{1}<\dots<a_{k}=\beta$ with $a_{j}\notin \Omega(w)$.  Let $I_{j}:=[a_{j-1},a_{j})$, 
	$I_{k}:=[a_{k-1},\beta]$, and $I_{k+1}=(\beta,\infty)$. Let 
	\[\mathcal P = \{I_{j}:j\leq k+1 \text{ and } m(1_{\{w\in I_{j}\}})>0\}\]
	
	Let $v_{I}= 1_{\{w\in I\}}$ and $w_{I}=wv_{I}$.  Note that 
	\begin{equation}\label{eq:sumIw}
		w=\sum_{I\in \mathcal P} w_{I}
	\end{equation} and $\sum_{I\in \mathcal P} v_{I}\leq 1_{\Gamma}$.
	
	For each $I\in \mathcal P$, let $m_{I}'$ be the invariant mean given by $m_{I}'(f)=m(w_{I})^{-1} \langle f, w_{I}\rangle$. Let $c_{I}:=m(w_{I})/m(w)$, so that $\sum_{I\in \mathcal P} c_{I}=1$.  Then (\ref{eq:sumIw}) implies  $m'(f) = \sum_{I\in \mathcal P} c_{I}m_{I}'$.  From (\ref{eq:mPrimeExtreme}) and Lemma \ref{lem:dStarConvex} we get $m_{I}(1_{D})=\dens^{*}(D)$ for each $I\in \mathcal P$.  In other words,
	\begin{equation}\label{eq:m1Ew}
		m(1_{D}w_{I})=m(w_{I})\dens^{*}(D).
	\end{equation}
	Let $a_{I}=\min I$, $b_{I}=\max I$, and let $v_{I}=1_{\{a_{I}\leq w\leq b_{I}\}}$, so that $\sum_{I\in \mathcal P} \leq m(1_{\alpha \leq w\leq \beta})\leq 1$.  Then (\ref{eq:Riemann1}) in Lemma \ref{lem:Riemann} implies
	\begin{equation}\label{eq:mvI}
		0\leq m(v_{I})-\frac{1}{b_{I}}m(w_{I})\leq \Big(\frac{b_{I}-a_{I}}{b_{I}}\Big)m(v_{I}).
	\end{equation}
	Let $\delta_{\mathcal P}=\max_{I\in \mathcal P} b_{I}-a_{I}$. We have 
	\[\sum_{I\in \mathcal P} \Big(\frac{b_{I}-a_{I}}{b_{I}}\Big)m(v_{I})\leq \max_{I\in \mathcal P} \frac{b_{I}-a_{I}}{b_{I}}\sum_{I\in \mathcal P} m(v_{I})\leq \frac{1}{\alpha}\delta_{\mathcal P}\cdot m(1_{\Gamma})= \frac{1}{\alpha}\delta_{\mathcal P}.\] 
	Multiplying inequality (\ref{eq:mvI}) by $\dens^{*}(D)$ and summing, we therefore have 
	\begin{equation}\label{eq:Approx1}
		0\leq \sum_{I \in \mathcal P} m(v_{I})\dens^{*}(D)-\frac{1}{b_{I}}m(1_{w_{I}})\dens^{*}(D) \leq  \frac{1}{\alpha}\delta_{\mathcal P}
	\end{equation}
	Likewise (replacing $w$ with $1_{D}w$), we have
	\begin{equation}\label{eq:Approx2}
		0\leq \sum_{I\in \mathcal P} m(1_{D}v_{I})-\frac{1}{b_{I}}m(1_{D}w_{I}) \leq \frac{\delta_{\mathcal P}}{\alpha}.
	\end{equation}
  Combining (\ref{eq:m1Ew}), (\ref{eq:Approx1}), and (\ref{eq:Approx2}), we get
	\begin{equation}
		\big|\sum_{I\in \mathcal P} m(v_{I})\dens^{*}(D)-m(1_{D}v_{I})|\leq (2/\alpha)\delta_{\mathcal P}.
	\end{equation}
This simplifies to $|m(1_{\{\alpha \leq w\leq \beta\}})\dens^{*}(D)-m(1_{D}1_{\{\alpha \leq w\leq \beta\}})|\leq (2/\alpha)\delta_{\mathcal P}$.  Letting $\delta_{\mathcal P}\to 0$, we get the desired conclusion. \end{proof}

\subsection{Restrictions}\label{sec:AbsCont}

\begin{lemma}\label{lem:AC}
	Let $\Gamma$ be a discrete abelian group, $m$, $m'\in \mathcal M(\Gamma)$ and $C>0$.  If $|m'(f)|\leq Cm(|f|)$ for all $f\in \ell^{\infty}(\Gamma)$, then there is a nonnegative element $w\in L^{2}(m)$ such that 
	\begin{equation}\label{eq:RadonNikodymW}
		m'(f)=\langle f,w\rangle_{L^{2}(m)} \quad \text{for all } f\in \ell^{\infty}(\Gamma).
	\end{equation}
	If we assume $m$, $m'\in \mathcal M_{\tau}(\Gamma)$, then $w$ above will be $\tau$-invariant.
\end{lemma}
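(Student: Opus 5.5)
This is a Radon–Nikodym statement, and I would prove it with the Riesz representation theorem on the Hilbert space $L^{2}(m)$.

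\emph{Step 1: $m'$ extends to a bounded functional on $L^{2}(m)$.} For $f\in \ell^{\infty}(\Gamma)$, Cauchy–Schwarz for the positive functional $m$ gives $m(|f|)\leq m(|f|^{2})^{1/2}m(1_{\Gamma})^{1/2}=\|f\|_{L^{2}(m)}$. The hypothesis then yields $|m'(f)|\leq C\|f\|_{L^{2}(m)}$. In particular $m'(f)=0$ whenever $f\sim_{m}0$, so $m'$ descends to a well-defined bounded linear functional on the quotient $L^{2}(m)_{0}$. By uniform continuity it extends uniquely to the completion $L^{2}(m)$, with the same bound $C$.

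\emph{Step 2: Riesz representation.} The Riesz theorem gives a unique $w\in L^{2}(m)$ with $m'(u)=\langle u,w\rangle_{L^{2}(m)}$ for all $u$. Since we need $m'(f)=\langle f,w\rangle$ and not its conjugate, I apply Riesz to the conjugate-linear functional $u\mapsto \overline{m'(\bar u)}$ if necessary. In fact $m'$ is real on real functions, so the only issue is which slot of the inner product is linear, and a direct check settles it. Taking $f=1_{\Gamma}$ shows $m(w)=1$, although this is not required.

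\emph{Step 3: $w$ is nonnegative.} For every $f:\Gamma\to[0,1]$ we have $\langle f,w\rangle=m'(f)\geq 0$, because $m'$ is a mean. Lemma \ref{lem:Nonneg} (ii)$\Rightarrow$(i) then gives that $w$ is nonnegative. One subtlety is that Lemma \ref{lem:Nonneg} uses the pairing $\langle f,w\rangle$ as written, and its proof uses a real-valued representing sequence. To get one, I would first note that $w$ is real-valued: $\langle f,w\rangle$ is real for real $f$, so replacing a representing sequence $f_{n}$ by $\operatorname{Re} f_{n}$ still represents $w$, because the imaginary part is orthogonal to all real functions and real functions are dense in $L^{2}(m)$.

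\emph{Step 4: invariance.} Assume now $m,m'\in\mathcal M_{\tau}(\Gamma)$. For $\gamma\in\Gamma$, since $\tau_{\gamma}$ is an isometry of $L^{2}(m)$ which is unitary with inverse $\tau_{-\gamma}$, we get
\[
\langle f,\tau_{\gamma}w\rangle=\langle \tau_{-\gamma}f,w\rangle=m'(\tau_{-\gamma}f)=m'(f)=\langle f,w\rangle .
\]
Here the first equality uses $m$-invariance, via $m(\tau_{-\gamma}(f)\,\overline{g})=m(f\,\overline{\tau_{\gamma}g})$ on bounded functions, passed to limits. The third equality uses $m'$-invariance. Since $\ell^{\infty}(\Gamma)$ is dense in $L^{2}(m)$, this gives $\tau_{\gamma}w=w$.

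The main care point is Step 3, namely making sure $w$ admits a real-valued, and then nonnegative, representing sequence so that Lemma \ref{lem:Nonneg} applies. The remaining steps are routine Hilbert space arguments.
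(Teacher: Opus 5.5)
Your proposal is correct and follows the paper's proof: you extend $m'$ to a bounded functional on $L^{2}(m)$, apply the Riesz representation theorem, get nonnegativity from Lemma \ref{lem:Nonneg}, and get $\tau$-invariance by moving $\tau_{\gamma}$ across the inner product and using density of $\ell^{\infty}(\Gamma)$. The differences are cosmetic: you get the bound from Cauchy--Schwarz for $m$ (constant $C$) where the paper uses Cauchy--Schwarz for $m'$ (constant $C^{1/2}$), and you explicitly check that $w$ is real-valued, which the paper's appeal to Lemma \ref{lem:Nonneg} leaves implicit.
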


\begin{proof}
Let $m$ and $m'$ be as in the hypothesis.	Note that $f\mapsto m'(f)$ is continuous in the $L^{2}(m)$-norm: if $m(|f|^{2})\leq 1$, then $|m'(f)|\leq m'(|f|)\leq m'(|f|^{2})^{1/2}m'(|1_{\Gamma}|^{2})^{1/2}\leq C^{1/2}m(|f|^{2})^{1/2}\leq C^{1/2}$.  Thus $f\mapsto m'(f)$ extends to a continuous linear functional $\Lambda$ on $L^{2}(m)$.  By the Reisz representation theorem for Hilbert spaces, there is a $w\in L^{2}(m)$ such that $m'(f)=\langle f,w\rangle_{L^{2}(m)}$ for all $f\in \ell^{\infty}(\Gamma)$.  Since $\langle f ,w\rangle_{L^{2}(m)} = m'(f)\geq 0$ for all $f:\Gamma \to [0,1]$, Lemma \ref{lem:Nonneg} implies $w$ is nonnegative.
	
If $m$, $m'\in \mathcal M_{\tau}(\Gamma)$ and $w\in L^{2}(m)$ satisfies (\ref{eq:RadonNikodymW}), we show that $w$ is $\tau$-invariant.  Under these assumptions, we have $\langle f,\tau_{\gamma}w\rangle_{L^{2}(m)}=\langle \tau_{\gamma}^{-1}f,w\rangle_{L^{2}(m)}=m'(\tau_{\gamma}^{-1}f)=m'(f)=\langle f, w\rangle_{L^{2}(m)}$ for all $f\in \ell^{\infty}(\Gamma)$ and all $\gamma\in \Gamma$.  Since $\ell^{\infty}(\Gamma)$ determines a dense subspace of $L^{2}(m)$, it follows that $\tau_{\gamma}w=w$ for all $\gamma\in \Gamma$.
\end{proof}

\begin{lemma}\label{lem:BoundRepresentative}
Let $\Gamma$ be a discrete abelian group, $m\in \mathcal M(\Gamma)$, and $\sigma$ a probability measure on $\mathcal M(\Gamma)$ representing $m$.  Let  $F:\mathcal M(\Gamma)\to [0,1]$ be $\sigma$-measurable. Then there is a $w\in L^{2}(m)$ with $0\leq w\leq 1_{\Gamma}$, such that
\begin{equation}\label{eq:fnRepresent}
\langle f,w\rangle_{L^{2}(m)}=\int \nu(f) F(\nu)\, d\sigma(\nu) \qquad \text{for all } f\in \ell^{\infty}(\Gamma).
\end{equation}
Furthermore, if $\sigma(\mathcal M_{\tau}(\Gamma))=1$, then $w$ is $\tau$-invariant.	
\end{lemma}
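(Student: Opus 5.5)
Define the functional $m'(f):=\int \nu(f)F(\nu)\,d\sigma(\nu)$ on $\ell^{\infty}(\Gamma)$ and represent it against $m$. The integral makes sense because each $\Lambda_{f}$ is $\sigma$-measurable and bounded by $\|f\|_{\infty}$, and $F$ is measurable with values in $[0,1]$. Clearly $m'$ is linear and positive. The key estimate is domination: for $f\ge 0$ we have $0\le m'(f)\le \int \nu(f)\,d\sigma(\nu)=m(f)$, because $0\le F\le 1$ and $\sigma$ represents $m$. For general complex $f$, split $f$ into real and imaginary parts and those into positive and negative parts, or use $|\nu(f)|\le \nu(|f|)$ directly, to get $|m'(f)|\le \int\nu(|f|)\,d\sigma = m(|f|)$.

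Next I would reproduce the argument of Lemma \ref{lem:AC} instead of citing it verbatim. That lemma requires $m'$ to be a mean, and here $m'(1_\Gamma)=\int F\,d\sigma$ may be less than $1$ (or even $0$). The argument only used the bound $|m'(f)|\le C m(|f|)$, which now holds with $C=1$. Cauchy–Schwarz for $m$ gives $|m'(f)|\le m(|f|)\le m(|f|^2)^{1/2}$, so $m'$ extends to a bounded linear functional on $L^2(m)$ of norm at most $1$. The Riesz representation theorem then gives $w\in L^2(m)$ with $m'(f)=\langle f,w\rangle_{L^2(m)}$; I would account for conjugation by noting $m'(\bar f)=\overline{m'(f)}$, so the representing vector may be chosen consistently. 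If $m'=0$, take $w=0$.

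To show $0\le w\le 1_\Gamma$ I would use Lemma \ref{lem:to01}, which reduces the claim to showing that $w$ and $1_\Gamma-w$ are both nonnegative. For that I would use criterion (ii) of Lemma \ref{lem:Nonneg}. For $f:\Gamma\to[0,1]$ we have $\langle f,w\rangle=m'(f)\ge 0$. We also have $\langle f,1_\Gamma-w\rangle = m(f)-m'(f)=\int \nu(f)(1-F(\nu))\,d\sigma\ge 0$, using $\langle f,1_\Gamma\rangle=m(f)$. Both functionals are real on real $f$, which is needed to read off nonnegativity from Lemma \ref{lem:Nonneg}.

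Finally, assume $\sigma(\mathcal M_\tau(\Gamma))=1$. Then for every $\gamma$, $m'(\tau_\gamma f)=\int\nu(\tau_\gamma f)F\,d\sigma=m'(f)$, since $\sigma$-almost every $\nu$ is invariant. Also $m$ is invariant, being represented by $\sigma$. The second paragraph of the proof of Lemma \ref{lem:AC} then applies unchanged: $\langle f,\tau_\gamma w\rangle=\langle\tau_\gamma^{-1}f,w\rangle=m'(f)=\langle f,w\rangle$ for all $f$, and density of $\ell^\infty(\Gamma)$ in $L^2(m)$ gives $\tau_\gamma w=w$.

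The main obstacle is bookkeeping rather than depth. It consists of checking that Lemma \ref{lem:AC}'s argument survives without normalization, that the inner product convention matches the complex-conjugate-linear slot, and that the isometry identity $\langle \tau_\gamma u,\tau_\gamma v\rangle=\langle u,v\rangle$ follows from invariance of $m$.
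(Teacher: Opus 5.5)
Your proposal is correct and follows the paper's proof. You define $m'$ from $F$ and $\sigma$, obtain $w$ by Riesz representation as in Lemma \ref{lem:AC}, check that $w$ and $1_\Gamma - w$ are nonnegative via criterion (ii) of Lemma \ref{lem:Nonneg}, conclude with Lemma \ref{lem:to01}, and get $\tau$-invariance from the invariance of $m$ and $m'$. The only difference is that the paper normalizes by $c=\int F\,d\sigma$ so it can cite Lemma \ref{lem:AC} verbatim and then sets $w=cv$. You instead work with the unnormalized functional, which also covers the degenerate case $c=0$ that the paper's division by $c$ passes over.
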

\begin{proof}
Let $c:=\int F(\nu)\, d\sigma(\nu)$, and define $m'\in \mathcal M(\Gamma)$ by $m'(f):=c^{-1}\int \nu(f)F(\nu) d\sigma(\nu)$.   Then $m'(|f|)\leq c^{-1}m(|f|)$ for all $f\in \ell^{\infty}(\Gamma)$.  By Lemma \ref{lem:AC}, there is a nonnegative $v\in L^{2}(m)$ such that $m'(f)=\langle f,v\rangle_{L^{2}(m)}$ for all $f\in \ell^{\infty}(\Gamma)$.  This means that $w:=cv$ satisfies (\ref{eq:fnRepresent}). We claim that $1_{\Gamma}-w$ is nonnegative as well.  To see this, note that when $f:\Gamma \to [0,1]$, we have \begin{align*}
	\langle f,1_{\Gamma}-w\rangle_{L^{2}(m)}	&= \langle f,1_{\Gamma} \rangle_{L^{2}(m)}- \langle f,w\rangle_{L^{2}(m)}\\
	&= \int \nu(f)\, d\sigma(\nu)-\int \nu(f)F(\nu)\, d\sigma(\nu)\\
	&=\int \nu(f)(1-F(\nu))\, d\sigma(\nu)\geq 0.
\end{align*} 
Thus $w$ and $1_{\Gamma}-w$ are both nonnegative elements of $L^{2}(m)$, so Lemma \ref{lem:to01} implies $0\leq w\leq 1_{\Gamma}$.

	To prove the last assertion, note that if $\sigma(\mathcal M_{\tau}(\Gamma))=1$, then $m$ is invariant, and the $m'$ defined above is invariant, as well.  Lemma \ref{lem:AC} then implies $w$ is $\tau$-invariant.
\end{proof}

\begin{lemma}\label{lem:StepApproximation}
Let $\Gamma$ be a discrete abelian group, $m\in \mathcal M_{\tau}(\Gamma)$, $D\subseteq \Gamma$, and $g\in \ell^{\infty}(\Gamma)$. Let $w\in L^{2}(m)$ be $\tau$-invariant and satisfy 
\begin{align}
	0&\leq w\leq 1_{\Gamma},\\
\beta&:= \langle g, w\rangle_{L^{2}(m)}>0,\\
	\label{eq:dStarRealized}	\dens^{*}(D)&=m(w)^{-1 } \langle 1_{D},w\rangle_{L^{2}(m)}.
\end{align}
   	Then there is an $\alpha\in (0,1)\setminus \Omega(w)$ and a $\tau$-invariant indicator element $v\in L^{2}(m)$ such that
	\begin{enumerate}
		\item\label{item:vSupport} $v\leq 1_{\{w\geq \alpha\}}$;
	\item\label{item:gvbeta}  $\langle g, v \rangle_{L^{2}(m)}\geq \beta$;
	\item\label{item:dStarE} $\dens^{*}(D)=m(v)^{-1}\langle 1_{D}, v\rangle_{L^{2}(m)}$.
\end{enumerate}
\end{lemma}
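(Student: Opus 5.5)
We build $v$ as a union of level sets of $w$, kept only where $g$ has good relative mass, and then pass to a limit. First reduce to $v$ of the form $1_{\{\alpha\le w\le \beta'\}}$ or a finite sum of such pieces. Fix a small $\alpha\in(0,1)\setminus\Omega(w)$. Partition $[\alpha,1]$ by points $\alpha=a_0<a_1<\dots<a_k=1$, all outside $\Omega(w)$ and with mesh $\epsilon$. Let $v_j:=1_{\{a_{j-1}\le w\le a_j\}}$; the top interval can be taken to be $[a_{k-1},1+\epsilon]$ to avoid endpoint issues. By Lemma \ref{lem:LevelSetInL2} the $v_j$ are $\tau$-invariant indicator elements. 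Their sum is $1_{\{\alpha\le w\le 1+\epsilon\}}$, which agrees with $1_{\{w\ge\alpha\}}$ because $w\le 1_\Gamma$. Moreover $v_jv_{j'}=0$ for $j\neq j'$ (by Lemma \ref{lem:LevelSetInL2}\ref{item:endpts}), so any subfamily sums to an indicator element.

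The key step uses Lemma \ref{lem:ExtremityOfdStar} applied to $D$ and $w$. Hypothesis (\ref{eq:dStarRealized}) is exactly (\ref{eq:wRealizesd*E}), and it gives $\langle 1_D,v_j\rangle=m(v_j)\dens^*(D)$ for every $j$. Any sum $v=\sum_{j\in S}v_j$ therefore satisfies conclusion \ref{item:dStarE} provided $m(v)>0$. Conclusion \ref{item:vSupport} holds automatically. So everything reduces to choosing $S$ so that $\langle g,v\rangle\ge\beta$.

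Let $S:=\{j:\langle g,v_j\rangle>0\}$. By Lemma \ref{lem:Riemann}, (\ref{eq:Riemann2}) with $b=a_j$, we have $|\langle g,v_j\rangle-a_j^{-1}\langle g,wv_j\rangle|\le\|g\|_\infty(\epsilon/\alpha)m(v_j)$. Hence $\langle g,v_j\rangle\ge a_j^{-1}\langle g,wv_j\rangle-\|g\|_\infty(\epsilon/\alpha)m(v_j)$. Summing over $S$ and using $a_j\le 1$ gives
\[\sum_{j\in S}\langle g,v_j\rangle\ \ge\ \sum_{j\in S}\langle g,wv_j\rangle - \|g\|_\infty\epsilon/\alpha \ \ge\ \sum_{j}\langle g,wv_j\rangle-C\epsilon/\alpha.\]
Here the second inequality must be justified more carefully. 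On indices $j\notin S$ we have $a_j^{-1}\langle g,wv_j\rangle\le \|g\|_\infty(\epsilon/\alpha)m(v_j)$, so $\langle g,wv_j\rangle$ is at most a comparable error term. On $S$ the factor $a_j^{-1}\ge1$ only helps when $\langle g,wv_j\rangle\ge 0$; when it is negative, the Riemann error absorbs it.

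Next, $\sum_j\langle g,wv_j\rangle=\langle g,w1_{\{w\ge\alpha\}}\rangle\ge\beta-\|g\|_\infty\alpha$, since $w1_{\{w<\alpha\}}$ has $L^1(m)$ mass at most $\alpha$. This yields only $\langle g,v\rangle\ge\beta-O(\alpha+\epsilon/\alpha)$, not $\ge\beta$.

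This loss is the main obstacle, and the gain from $a_j^{-1}$ is what must recover it. Since $w\le 1$ and $\langle g,v_j\rangle\approx a_j^{-1}\langle g,wv_j\rangle$, selecting exactly the positive pieces produces $\sum_{j\in S}a_j^{-1}\langle g,wv_j\rangle\ge\sum_{j\in S}\langle g,wv_j\rangle$. This is at least $\langle g,w\rangle$ minus the contributions of the negative pieces and of $\{w<\alpha\}$. The negative pieces only increase the bound when they are dropped.

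To handle $\{w<\alpha\}$, I would first pick $\alpha$ and then compare with the strictly larger quantity $\sum_{j\in S}\langle g,wv_j\rangle/a_j$. Concretely, I would use the pieces below $\alpha$ too, but with weights $w/a_j\ge w$. That is, I would include levels down to a very small $\alpha'$ and bound the error $\|g\|_\infty\epsilon/\alpha'$ by choosing a mesh geometric in ratio, $a_j/a_{j-1}\le1+\epsilon$. Then Lemma \ref{lem:Riemann} gives relative error $\epsilon$ per piece, so the total error is at most $\epsilon\|g\|_\infty$ uniformly in $\alpha'$. This gives $\langle g,v\rangle\ge\langle g,w1_{\{w\ge\alpha'\}}\rangle^+ - \epsilon\|g\|_\infty$.

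If this still falls short by an arbitrarily small amount, I would exploit strictness. With $\beta_0:=\sup$ over such $v$ and the gain factor $a_j^{-1}\ge 1$, a refinement shows $\langle g,v\rangle\ge\sum_{j\in S}\langle g,wv_j\rangle\cdot(1+\epsilon)^{-1}\cdot a_j^{-1}$. Choosing $S$ among the pieces where $w\le 1-\eta$ gives a definite gain whenever $w$ is not $\sim_m$ an indicator. If $w$ is an indicator, Lemma \ref{lem:LevelToIndicator} lets us take $v=w$ directly. Verifying this dichotomy cleanly is the step I expect to be hardest.

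Finally I would check $m(v)>0$, which follows from $\langle g,v\rangle\ge\beta>0$.
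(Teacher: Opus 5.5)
Your skeleton is the paper's: level sets $v_I=1_{\{w\in I\}}$ with endpoints off $\Omega(w)$, keep the pieces with $\langle g,v_I\rangle>0$, compare $\langle g,v_I\rangle$ with $b_I^{-1}\langle g,wv_I\rangle$ via (\ref{eq:Riemann2}), get (iii) piece by piece from Lemma~\ref{lem:ExtremityOfdStar}, and take $v=w$ if $w$ is an indicator. The gap is in the one step that matters. You only reach $\langle g,v\rangle\ge\beta-O(\alpha+\epsilon)$. The mechanism you propose for recovering the loss, ``a definite gain whenever $w$ is not $\sim_m$ an indicator,'' is false. The factor $a_j^{-1}>1$ on pieces with $w\le c<1$ yields a gain only if $\theta_c:=\langle g,w1_{\{w\le c\}}\rangle\neq 0$. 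By contrast, $w$ failing to be an indicator (via Lemma~\ref{lem:LevelToIndicator}) only gives $m(w1_{\{w\le c\}})>0$.

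Here is a counterexample to your mechanism. Let $C_1,C_2\subseteq\mathbb Z$ be the unions of the even- and odd-indexed blocks of a sparse sequence of intervals whose lengths tend to infinity. Let $\nu_i$ be an invariant mean obtained as a limit of averages over the blocks of $C_i$, and set $m=\tfrac12(\nu_1+\nu_2)$, $w=\tfrac12 1_{C_1}+1_{C_2}$, $g=1_{C_2}$, $D=\mathbb Z$. All hypotheses hold and $w$ is not an indicator. Yet $\langle g,1_{\{w\in I\}}\rangle=0$ for every interval $I\subseteq(0,1)$, so there is nothing to gain below $1$. The lemma does hold here (take $v=1_{C_2}$), but only because the error terms vanish, not because of a gain. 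The paper's Case~2 has the same weak spot. It chooses $c$ with $m(w1_{\{w\le c\}})>0$, while its derivation of (\ref{eq:FirstDelta}) tacitly needs $m(gw')>0$. So following the paper's route does not close your gap.

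The missing idea is to split on $\theta_c$ rather than on whether $w$ is an indicator. Use your geometric mesh (ratio $\le 1+\epsilon$, top endpoint $1+\epsilon$), so each Riemann error is at most $\|g\|_\infty\epsilon\, m(v_I)$.

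\emph{Case $\theta_c>0$ for some $c\in(0,1)\setminus\Omega(w)$.} The positive pieces in $[\alpha,c)$ contribute at least $c^{-1}(\theta_c-\|g\|_\infty\alpha)-\|g\|_\infty\epsilon$, using $a_j\le c$ and $|\langle g,w1_{\{w<\alpha\}}\rangle|\le\|g\|_\infty\alpha$. The positive pieces in $[c,1+\epsilon]$ contribute at least $\beta-\theta_c-O(\epsilon)$. The total is $\beta+(c^{-1}-1)\theta_c-O(\alpha+\epsilon)$, which exceeds $\beta$ for small $\alpha,\epsilon$.

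\emph{Case $\theta_c<0$ for some such $c$.} Discard everything below $c$; the positive pieces in $[c,1+\epsilon]$ give at least $\beta+|\theta_c|-O(\epsilon)>\beta$.

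\emph{Case $\theta_c=0$ for every such $c$.} Then $\langle g,w1_{\{a\le w\le b\}}\rangle=0$ whenever $0<a<b<1$. Applying (\ref{eq:Riemann2}) on fine partitions of $[a,b]$ gives $\langle g,1_{\{a\le w\le b\}}\rangle=0$. Hence $\langle g,1_{\{w\ge c\}}\rangle$ does not depend on $c\in(0,1)\setminus\Omega(w)$. Since $\langle g,w1_{\{w\ge c\}}\rangle=\beta-\theta_c=\beta$, applying (\ref{eq:Riemann2}) to $[c,1+\epsilon]$ and letting $c\to1$, $\epsilon\to0$ shows $\langle g,1_{\{w\ge c\}}\rangle=\beta$. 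So $v=1_{\{w\ge c\}}$ works exactly, and this case also covers $w$ being an indicator.
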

\begin{proof}	
	Let $\Gamma$, $m$, $w$, and $g$ be as in the hypothesis.  Let $\Omega=\Omega(w)$ be as in Lemma \ref{lem:LevelSetInL2}. 	
	
	Case 1: $w$ is an indicator element.  In this case we take $v=w$ and are done.
	
	Case 2: $w$ is not an indicator element. Then by Lemma \ref{lem:LevelToIndicator} and the assumption $w\leq 1_{\Gamma}$, there is a $c \in (0,1)\setminus \Omega$ such that $m(w1_{\{w\leq c\}})>0$.  Fix such a $c$, and let $w'=w1_{\{w\leq c\}}$, $w''=w1_{\{c<w\}}$.  Then $w'+w''=w$.
	
	We will prove that for some $\delta>0$, there are collections $\mathcal Q'$, $\mathcal Q''$ of mutually disjoint intervals bounded away from $0$, with endpoints not in $\Omega$, such that $I\subseteq (0,c)$ for $I\in \mathcal Q'$, $I\subseteq [c,1]$ for $I\in \mathcal Q''$, and
	\begin{align}\label{eq:FirstDelta} \sum_{I\in \mathcal Q'} m(g 1_{\{w\in I\}})&> m(gw')+\delta\\
		 \label{eq:SecondDelta}\sum_{I\in \mathcal Q''} m(g 1_{\{w\in I\}})&>m(gw'')-\delta.
	\end{align}
	Having found such collections, we will let $\mathcal Q=\mathcal Q'\cup \mathcal Q''$, and let $v=\sum_{I\in \mathcal Q} 1_{\{w\in I\}}$.  Adding (\ref{eq:FirstDelta}) to (\ref{eq:SecondDelta}), we get $m(gv)> m(v(w'+w''))=m(gw)$, so conclusion \ref{item:gvbeta} is satisfied.    For an interval $I\subseteq [0,1]$, let $v_{I}:= 1_{\{w\in I\}}$ and let $w_{I}:=w 1_{\{w\in I\}}$.

	To prove Conclusion \ref{item:dStarE}, we apply Lemma \ref{lem:ExtremityOfdStar} to get $m(1_{D}v_{I})=m(v_{I})\dens^{*}(D)$ for each $I\in \mathcal Q$.  Summing, we get $m(v1_{D})=m(v)\dens^{*}(D)$, so \ref{item:dStarE} holds.  The $\tau$-invariance of $v$ follows from Lemma \ref{lem:LevelSetInL2}. 
	
	We now construct the collections $\mathcal Q'$ and $\mathcal Q''$.	Let $\varepsilon>0$, and choose $\alpha>0$ such that $m(|g|w 1_{\{w\leq \alpha\}})<\varepsilon m(|g|w')$.
	
		Let $\alpha=a_{1}<\cdots<a_{k}=c<a_{k+1}<\dots<a_{r}=1$, $a_{j}\in [0,1]\setminus \Omega$, with $\max_{j} a_{j}-a_{j-1}<\alpha \varepsilon $.  Set $I_{j}=[a_{j-1},a_{j})$, $j=1,\dots,r-1$, $I_{r}=[a_{r-1},a_{r}]$. Then $\mathcal P = \{I_{j}:j=1,\dots,r\}$ is a partition of $[0,1]$.

Let $a_{I}=\inf I$, $b_{I}=\sup I$.  Inequality (\ref{eq:Riemann2}) in Lemma \ref{lem:Riemann} implies
   \begin{equation}\label{eq:mgvi}
   	|m(gv_{I})-\frac{1}{b_{I}}m(g w_{I})| \leq (\varepsilon/b_{I}) m(v_{I})\leq \varepsilon m(v_{I}). 
   \end{equation}
   Let $\mathcal Q' = \{I_{j}: j\leq k, m(gw_{I})>0\}$ and $\mathcal Q''=\{I_{j}: k<j\leq r, m(gw_{I})>0\}$.  Then 
   \[\sum_{I\in \mathcal Q'} m(gw_{I})\geq m(gw')-m(gw'1_{w\leq \alpha})\geq m(gw')- \varepsilon m(|g|w'),\]  so $\sum_{I\in \mathcal Q} \frac{1}{b_{I}}m(gw_{I})\geq \frac{1-\varepsilon}{c}m(gw')$. Inequality  (\ref{eq:mgvi}) now implies
  \[
  \sum_{I\in \mathcal Q'} m(gv_{I}) \geq \frac{1-\varepsilon}{c}m(gw_{I}) - \varepsilon\sum_{I\in \mathcal Q'}m(v_{I}) \geq \frac{1-\varepsilon}{c}m(gw') -\varepsilon.
  \]
Since $c<1$ is fixed, we can make $\varepsilon$ sufficiently small that $\frac{1-\varepsilon}{c}>1$, and also small enough that $\frac{1-\varepsilon}{c}m(g w')-\varepsilon> m(g w')$.  This proves (\ref{eq:FirstDelta}). Inequality (\ref{eq:SecondDelta}) is proved similarly, with $1$ in place of $c$. 
\end{proof}
\begin{proof}[Proof of Lemma \ref{lem:Final}]
	With $A$, $B$, $m$, and $F$ as in the hypothesis, let $g=1_{A}+1_{B}-1_{A+B}$, so that the integral in (\ref{eq:BigIntegral}) is $ \int \nu(g)F(\nu)\, d\sigma$.  Lemma \ref{lem:BoundRepresentative} provides a $w\in L^{2}(m)$ with $0\leq w\leq 1_{\Gamma}$ satisfying $m(gw)=\beta$ and $m(1_{A+B}w)=\dens^{*}(A+B)m(w)$. 
	Applying Lemma \ref{lem:StepApproximation} with $A+B$ in place of $E$ yields the desired $v$.	  \end{proof}

\subsection{Extracting a F{\o}lner net from an invariant vector.}

The next lemma is needed to deduce Proposition \ref{prop:FullFolnerStrict} from Theorem \ref{th:MainOneMeanStrict}.  Recall that a F{\o}lner net $\mb \Phi$ is (by definition) full if $m(f):=\lim_{i\in I}\frac{1}{|\Phi_{i}|}\sum_{\gamma\in \Phi_{i}} f(\gamma)$ is well-defined for all $f\in \ell^{\infty}(\Gamma)$, and therefore defines an invariant mean.

\begin{lemma}\label{lem:CauchyAndInvariantSetsToFolner}
	Let $\Gamma$ be a discrete abelian group and let $m\in \mathcal M_{\tau}(\Gamma)$ be given by a full F{\o}lner net $(\Phi_{i})_{i\in I}$.   Let $v\in L^{2}(m)$ be a $\tau$-invariant indicator element with $c:=m(v)>0$.  Then there are subsets $\Psi_{i}\subseteq \Phi_{i}$ such that $(\Psi_{i})_{i\in I}$ is a full F{\o}lner net and
	\begin{equation}\label{eq:PsiRepresentsV}
		c^{-1}m(fv)=\lim_{i\in I} \frac{1}{|\Psi_{i}|}\sum_{\gamma\in \Psi_{i}} f(\gamma) \quad \text{for all } f\in \ell^{\infty}(\Gamma).
	\end{equation}
	Consequently $\lim_{i\in I} |\Psi_{i}|/|\Phi_{i}|=m(v)$.
\end{lemma}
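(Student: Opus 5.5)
We will represent $v$ by a Cauchy sequence of sets $C_n\subseteq\Gamma$ with $1_{C_n}\to v$ in $L^2(m)$, and then define $\Psi_i := \Phi_i\cap C_{n(i)}$ for a suitable choice of $n(i)$ that tends to infinity along the directed set $I$.

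Since $m$ is given by the full F{\o}lner net, for each fixed $n$ and each $f\in\ell^\infty(\Gamma)$ we have
\[
\lim_{i\in I}\frac{1}{|\Phi_i|}\sum_{\gamma\in\Phi_i\cap C_n} f(\gamma)=m(f1_{C_n}).
\]
The relevant quantities converge as $n\to\infty$:
\begin{itemize}
\item $m(C_n)\to c$;
\item $m(f1_{C_n})\to m(fv)$, uniformly in $\|f\|_\infty\le 1$, because $|m(f(1_{C_n}-v))|\le \|f\|_\infty\|1_{C_n}-v\|_{L^2(m)}$;
\item by $\tau$-invariance of $v$, $m(C_n\triangle(C_n+t)) = \|1_{C_n}-\tau_t 1_{C_n}\|_{L^2(m)}^2 \le 4\|1_{C_n}-v\|^2_{L^2(m)}\to0$ for each $t$.
\end{itemize}

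The main obstacle is the diagonalization over a net while keeping the full $\ell^\infty$ mean statement (\ref{eq:PsiRepresentsV}), which involves all $f$ simultaneously. To handle it, we choose the function $n(i)$ as follows. For each $N$, the set $I_N$ of indices $i$ is defined by the following requirements holding for all $n\le N$:
\begin{itemize}
\item $\big||\Phi_i\cap C_n|/|\Phi_i|-m(C_n)\big|<1/N$;
\item $|\Phi_i\cap (C_n\triangle (C_n+t))|/|\Phi_i|<m(C_n\triangle(C_n+t))+1/N$ for $t$ in a finite set;
\item the F{\o}lner error of $\Phi_i$ is small.
\end{itemize}

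When $\Gamma$ is uncountable, the dependence on $t$ forces us to index instead by pairs $(N,E)$ with $E\subseteq\Gamma$ finite. Each $I_{N,E}$ contains a tail of $I$, and these tails decrease. We then set $n(i)$ to be the largest $N\le$ some bound such that $i$ lies in the tail for $(N,E_N)$. Alternatively, and more cleanly, we replace the directed set by $I\times\mathbb N$ and then return to $I$ via the tails.

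The F{\o}lner property of $\Psi_i$ follows from
\[
|\Psi_i\triangle(\Psi_i+t)|\le|\Phi_i\triangle(\Phi_i+t)|+|\Phi_i\cap(C_n\triangle(C_n+t))|+\text{(boundary)},
\]
divided by $|\Psi_i|\approx c|\Phi_i|$ with $c>0$.

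Fullness requires more care, because the limit $\lim_{i}|\Phi_i|^{-1}\sum_{\Phi_i\cap C_{n(i)}}f$ must exist for every $f$. The uniform estimate handles this. We bound
\[
\Big|\frac1{|\Phi_i|}\sum_{\Phi_i\cap C_{n(i)}}f - m(fv)\Big|\le \Big|\frac1{|\Phi_i|}\sum_{\Phi_i\cap C_{n(i)}}f-m(f1_{C_{n(i)}})\Big|+\|f\|_\infty\|1_{C_{n(i)}}-v\|.
\]
The first term is not uniform in $f$, which is the real difficulty. For a fixed $f$, however, we can use the comparison
\[
\Big|\sum_{\Phi_i\cap C_{n}}f-\sum_{\Phi_i\cap C_{N}}f\Big|\le\|f\|_\infty|\Phi_i\cap(C_n\triangle C_N)|.
\]
Here $|\Phi_i\cap (C_n\triangle C_N)|/|\Phi_i|$ is controlled by $m(C_n\triangle C_N)+1/N$, once the symmetric differences $C_n\triangle C_{n'}$ for $n,n'\le N$ are included among the sets controlled in $I_N$.

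This reduces the problem to a fixed $C_N$, for which $\lim_i |\Phi_i|^{-1}\sum_{\Phi_i\cap C_N}f=m(f1_{C_N})$. Letting $N\to\infty$ then gives (\ref{eq:PsiRepresentsV}). Dividing by $|\Psi_i|/|\Phi_i|\to c$ yields the mean $c^{-1}m(fv)$, and taking $f=1_\Gamma$ gives $\lim_{i\in I}|\Psi_i|/|\Phi_i|=m(v)$.
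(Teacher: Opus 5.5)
Your construction is the paper's: $\Psi_i=\Phi_i\cap C_{n(i)}$, with $n(i)$ chosen along nested tails of $I$. Your fullness argument is also the paper's: you control $|\Phi_i\cap(C_n\triangle C_{n'})|/|\Phi_i|$ for $n,n'\le n(i)$ and compare with a fixed $C_N$, which is what the paper does via (\ref{eq:CnCnprime}). Two steps, however, do not go through as written.

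\textbf{The F{\o}lner property for uncountable $\Gamma$.} Your estimate needs $|\Phi_i\cap(C_{n(i)}\triangle(C_{n(i)}+t))|/|\Phi_i|$ to be small. You arrange this only for $t$ in the finite sets $E_N$ built into $I_N$. Since $n(i)$ comes from a single sequence $(N,E_N)$, this covers only the countable set $\bigcup_N E_N$, so for uncountable $\Gamma$ it says nothing about most $t$. Indexing by pairs $(N,E)$ cannot help, because each $\Psi_i$ is one set. The fix is the comparison you already use for fullness, which is also what the paper does. Fix $N$ and take $i$ with $n(i)\ge N$; then
\[
|\Psi_i\triangle(\Psi_i+t)|\le 2|\Phi_i\cap(C_{n(i)}\triangle C_N)|+|\Phi_i\triangle(\Phi_i+t)|+|\Phi_i\cap(C_N\triangle(C_N+t))|.
\]
By fullness of $\mb\Phi$ this gives
\[
\limsup_{i}\frac{|\Psi_i\triangle(\Psi_i+t)|}{|\Phi_i|}\le 2\sup_{n\ge N}m(C_n\triangle C_N)+m(C_N\triangle(C_N+t)),
\]
which tends to $0$ as $N\to\infty$ for every $t\in\Gamma$ at once. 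So no $t$-dependent conditions in $I_N$ are needed.

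\textbf{The definition of $n(i)$.} ``The largest $N\le$ some bound with $i$ in the $N$-th tail'' requires three things:
\begin{enumerate}
\item the tails $T_N=\{i: i\succeq j_N\}$ are nested ($j_N\preceq j_{N+1}$), so that $\{N: i\in T_N\}$ is an initial segment;
\item either no $i$ lies in every $T_N$, or the bound tends to infinity along $I$;
\item the net stays indexed by $I$.
\end{enumerate}
Without the second point, $n(i)$ can be infinite or fail to tend to infinity. Passing to $I\times\mathbb N$ is not available, since $\Psi$ must be indexed by $I$ with $\Psi_i\subseteq\Phi_i$.

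The paper settles this in Observation~\ref{obs:Cofinal}. It requires $|\Phi_i\cap(\Phi_i+\gamma)|>(1-\varepsilon_k)|\Phi_i|$ for $\gamma$ in a strictly increasing sequence of finite sets $\Gamma_k$. This forces each $i$ into only finitely many tails, because a finite nonempty set is invariant under only finitely many translations. Your cap would also work if you take it to be $|\Phi_i|$, which tends to infinity along any F{\o}lner net in an infinite group.
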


\begin{proof} 	Write $v$ as $\lim_{n\to\infty} 1_{C_{n}}$ in $L^{2}(m)$. 	For each $i\in I$, we will choose some $n_{i}\in \mathbb N$ and set $\Psi_{i}=\Phi_{i}\cap C_{n_{i}}$.  
	
 	We assume $\Gamma$ has infinite cardinality, as otherwise the lemma is trivial. Let $\Gamma_{1}\subseteq \Gamma_{2}\subseteq \dots $ be a strictly increasing sequence of finite subsets of $\Gamma$.  We are \emph{not} assuming $\Gamma = \bigcup_{k\in \mathbb N} \Gamma_{k}$.
	
	Let $(\varepsilon_{j})_{j\in \mathbb N}$ be a sequence of positive numbers converging to $0$.

		Choose $n_{1}$ so that $\big|c-\lim_{i\in I} |C_{n}\cap \Phi_{i}|/|\Phi_{i}|\big|<\varepsilon_{1}$ for all $n\geq n_{1}$.
	
	Choose $i_{1}\in I$ such that 	 $\big |c-|C_{n_{1}}\cap \Phi_{i}|/|\Phi_{i}|\big |<\varepsilon_{1}$ for all $i\succeq i_{1}$.

	Suppose $k\in \mathbb N$, and $n_{1}<n_{2}<\dots< n_{k-1}$, $i_{1}\preceq i_{2}<\preceq \dots \preceq i_{k-1}$ are defined.  Choose $n_{k}>n_{k-1}$ so that $\big |c-\lim_{i\in I} |C_{n}\cap \Phi_{i}|/|\Phi_{i}|\big|<\varepsilon_{k}$ for all $n\geq n_{k}$.
	
	Choose $i_{k}\in I$ so that $i_{k}\succeq i_{k-1}$ and each of the following hold:
	\begin{equation}\label{eq:ikBound}
		\big|m(C_{n})-\frac{1}{|\Phi_{i}|}|C_{n}\cap \Phi_{i}|\big| <\varepsilon_{k} \quad \text{for all } n\leq n_{k},\ i\succeq i_{k},
	\end{equation}
	\begin{equation}\label{eq:CnCnprime}
		|C_{n}\cap C_{n'} \cap \Phi_{i}|/|\Phi_{i}|\geq m(C_{n}\cap C_{n'})- \varepsilon_{k} \text{ for all } n, n'\leq n_{k}, i\succeq i_{k},
	\end{equation}
	\begin{equation}\label{eq:ExploitFolner}
	|\Phi_{i}\cap (\Phi_{i}+\gamma)|/|\Phi_{i}|>(1-\varepsilon_{k}) \qquad \text{for all } i\succeq i_{k}, \gamma \in \Gamma_{k}.
	\end{equation}
	
	For each $i\in I$, let $\Psi_{i}=C_{n_{k}}\cap \Phi_{i}$ if $i\succeq i_{k}$ and $i\nsucceq i_{k+1}$.  Let $\Psi_{i} = \Phi_{i}$ if $i\nsucceq i_{1}$.

\begin{observation}\label{obs:Cofinal} Condition (\ref{eq:ExploitFolner}) implies that no $i\in I$ satisfies $i\succeq i_{k}$ for all $k\in \mathbb N$: otherwise finiteness of $\Phi_{i}$ and (\ref{eq:ExploitFolner}) imply $\Phi_{i}=\Phi_{i}+\gamma$ for all $\gamma \in \bigcup_{k} \Gamma_{k}$, which is impossible when $\Phi_{i}$ is a finite set. Thus we will have the following dichotomy: 
	\begin{equation}\label{eq:Cofinal?}
		\text{If $i\in I$, then either  there is a $k\in \mathbb N$ such that $i_{k}\preceq i$ and $i_{k+1}\npreceq i$, or $i_{1}\npreceq i$.}
	\end{equation}
Thus, to prove that some net $(y_{i})_{i\in I}$ indexed by $I$ satisfies $\lim_{i\in I} y_{i}=y$, it suffices to prove that $\lim_{k\to\infty} \sup_{i_{k}\preceq i, i_{k+1}\npreceq i} |y-y_{i}|=0$. Note that it does not suffice to prove that $\lim_{k\to\infty} |y-y_{i_{k}}|=0$, as (\ref{eq:Cofinal?}) does not imply $\{i_{k}:k\in \mathbb N\}$ is cofinal in $(I,\preceq)$.
\end{observation}

	We claim that 
	\begin{equation}\label{eq:PsiBound}
		\lim_{i\in I} |\Psi_{i}|/|\Phi_{i}|=c.
	\end{equation}
	To prove this, fix $\varepsilon>0$ and choose $N$ large enough that $c-\varepsilon<m(C_{n})<c+\varepsilon$ for all $n\geq N$. For sufficiently large $k$, and $i\in I$ such that $i\succeq i_{k}$,  $i\nsucceq i_{k+1}$, (\ref{eq:ikBound}) implies
	\[c-(\varepsilon +\varepsilon_{k})<|\Psi_{i}|/|\Phi_{i}|=|C_{n_{k}}\cap \Phi_{i}|/|\Phi_{i}|<c+\varepsilon+\varepsilon_{k}.\] 
	 To prove (\ref{eq:PsiRepresentsV}) let $\varepsilon>0$ and $f:\Gamma \to [0,1]$. It suffices to prove that for all sufficiently large $n$,  
	 \begin{equation}\limsup_{i\in I} \Big|\frac{1}{|\Psi_{i}|}\sum_{{\gamma\in \Psi_{i}}} f(\gamma)-\frac{1}{c|\Phi_{i}|}\sum_{i\in \Phi_{i}}  1_{C_{n}}(\gamma)f(\gamma)\Big|<\varepsilon.
\end{equation}
The expression in absolute values on the left simplifies as
\begin{equation}
	\frac{1}{|\Phi_{i}|}\sum_{\gamma\in \Phi_{i}} \Big(\frac{|\Phi_{i}|}{|\Psi_{i}|}1_{\Psi_{i}}(\gamma)-c^{-1}1_{C_{n}}(\gamma)\Big)f(\gamma).
\end{equation}
In light of (\ref{eq:PsiBound}), it suffices to prove that when $n$ is sufficiently large, we have
\begin{equation}\label{eq:PsiCbound}
	c-\varepsilon\leq \liminf_{i\in I} |\Psi_{i}\cap C_{n}|/|\Phi_{i}|\leq  \limsup_{i\in I} |\Psi_{i}\cap C_{n}|/|\Phi_{i}|\leq c+\varepsilon.
\end{equation}	
	The rightmost inequality follows from (\ref{eq:PsiBound}) and the middle inequality is trivial. To prove the leftmost inequality, choose $N_{\varepsilon}$ so that $\inf_{n, n'\geq N_{\varepsilon}}m(C_{n}\cap C_{n'})> c-\varepsilon$. Fix $k\in \mathbb N$ so that $\varepsilon_{k}<\varepsilon$ and $n_{k}\geq N_{\varepsilon}.$  We will prove that if  $N_{\varepsilon} \leq n \leq n_{k}$ and $i\succeq i_{k}$, then
	\begin{equation}\label{eq:PsiiCn}
		|\Psi_{i}\cap C_{n}|/|\Phi_{i}| \geq m(C_{n})-\varepsilon_{k}.
	\end{equation}
	So fix such an $n$. 	Let $j\in \mathbb N$ with $j\geq k$.  If $i\succeq i_{j}$, and $i\nsucceq i_{j+1}$ we have $\Psi_{i}=C_{n_{j}}\cap \Phi_{i}$. Then (\ref{eq:CnCnprime}) implies
	\[|\Psi_{i}\cap C_{n}|/|\Phi_{i}|=|C_{n_{j}}\cap C_{n}\cap \Phi_{i}|/|\Phi_{i}|\geq m(C_{n_{j}}\cap C_{n})-\varepsilon_{j}.\]  	
  Our choice of $N_{\varepsilon}$ implies the right-hand side above is at least $c-\varepsilon$. This completes the proof of (\ref{eq:PsiCbound}).

	We now show that  $\mb \Psi$ is a F{\o}lner net.  Fix $\gamma\in \Gamma$, let $\varepsilon>0$, and choose $N$  large enough that 
	\begin{equation}\label{eq:CnDelta}
		d_{\mb \Phi}(C_{n}\triangle (C_{n}+\gamma))<\varepsilon 
	\end{equation}
	\begin{equation} c-\varepsilon< d_{\mb\Phi}(C_{n}) <c+\varepsilon,
		\end{equation}
		 \begin{equation}
		 	\lim_{i\in I} |\Psi_{i}\cap C_{n}|/|\Psi_{i}| > 1-\varepsilon
		 \end{equation} each hold for all $n\geq N$.	Since $\Psi_{i}\subseteq \Phi_{i}$ for each $i$, these and (\ref{eq:PsiBound}) imply
		 \begin{equation}
		\lim_{i\in I}\frac{1}{|\Phi_{i}|}|(\Psi_{i}\triangle C_{n})\cap \Phi_{i}|<2\varepsilon \quad \text{for all } n\geq N.
	\end{equation}	 
	Together with (\ref{eq:CnDelta}), this implies  $\lim_{i\in I}\frac{1}{|\Phi_{i}|}|\Psi_{i}\triangle (\Psi_{i}+\gamma)|<4\varepsilon$.	Thus \[\lim_{i\in I} \frac{1}{|\Psi_{i}|}|\Psi_{i}\triangle (\Psi_{i}+\gamma)|=\lim_{i\in I}\frac{|\Phi_{i}|}{|\Psi_{i}|}\frac{1}{|\Phi_{i}|}|\Psi_{i}\triangle (\Psi_{i}+\gamma)|<4\varepsilon/c. \]
Since $\varepsilon>0$ and $\gamma \in \Gamma$ were arbitrary, this shows that $\mb \Psi$ is a F{\o}lner net.
\end{proof}

\section{Proof of Theorem \ref{th:MainOneMeanStrict} and Proposition \ref{prop:FullFolnerStrict}}\label{sec:MainProofs}

\subsection{Proof of Theorem \ref{th:MainOneMeanStrict}}

Recall the statement:

Let $\Gamma$ be a discrete abelian group and $A$, $B\subseteq \Gamma$ nonempty, and let $K\leq \Gamma$ be the KJ-stabilizer of $A+B$ given by Theorem \ref{th:OneExtreme}, so that $k:=[\Gamma:K]$ is finite and $K$ is the stabilizer of $A+B+K$.

If $m\in \mathcal M_{\tau}(\Gamma)$ satisfies $\delta:=m(A)+m(B)-m(A+B)>0$, then there is an  $m'\in \mathcal M_{\tau}(\Gamma)$ such that
		\begin{enumerate}
	\item\label{item:m'kd} $m'$ is a $k\delta$-restriction of $m$;
	\item\label{item:m'stable}  $m'(A+B)=m'(A+B+K)$;
	\item\label{item:m'gap} $m'(A)+m'(B)-m'(A+B)\geq \delta.$
\end{enumerate}

\begin{proof}[Proof of Theorem \ref{th:MainOneMeanStrict}] Fix a discrete abelian group $\Gamma$ and nonempty $A, B\subseteq \Gamma$, and an invariant mean $m$ on $\ell^{\infty}(\Gamma)$ with $\delta:=m(A)+m(B)-m(A+B)>0$.   Let $K\leq \Gamma$ be the finite index subgroup given by Theorem \ref{th:OneExtreme}, and write $k$ for $[\Gamma:K]$. Let $g=1_{A}+1_{B}-1_{A+B}$.  We will find an indicator element $v\in L^{2}(m)$ satisfying
	\begin{align}
	\label{eq:mv-1} m(v)^{-1}m(1_{A+B}v)&=\dens^{*}(A+B)=\dens^{*}(A+B+K)\\
	\label{eq:mgv}	m(gv)\geq \delta. 
	\end{align}
Setting $m'(f):=m(v)^{-1}m(fv)$, we claim that properties (\ref{eq:mv-1}) and (\ref{eq:mgv}) yield conclusions \ref{item:m'kd}-\ref{item:m'gap}. 

To verify \ref{item:m'kd}, it suffices to prove $m(v)\geq k\delta$. To do so, apply Corollary \ref{cor:Q} to get $k^{-1}\geq m'(g)$. Now (\ref{eq:mgv}) implies $m'(g)\geq m(v)^{-1}\delta$.   Combining these, we get $k^{-1}\geq m(v)^{-1}\delta$, so $m(v)\geq k\delta$.

To verify \ref{item:m'stable}, note that (\ref{eq:mv-1}) implies $m'(A+B)=\dens^{*}(A+B+K)$.  The trivial inequalities $m'(A+B)\leq m'(A+B+K)\leq \dens^{*}(A+B+K)$ then imply \ref{item:m'stable}.

To verify conclusion \ref{item:m'gap}, note that (\ref{eq:mgv}) implies $m'(g)\geq m(v)^{-1}\delta$.  Since $m'(g)=m'(A)+m'(B)-m'(A+B)$ and $m'(v)\leq 1$, we get \ref{item:m'gap}.

To find an indicator element $v$ satisfying (\ref{eq:mv-1}) and (\ref{eq:mgv}),  let $\sigma$ be a Choquet decomposition of $m$ over $\mathcal M_{\tau}(\Gamma)$.  Since $\sigma$ represents $m$, we have
\begin{equation}\label{eq:DisintegrateDelta}
	\int  \nu(A)+\nu(B)-\nu(A+B)\, d\sigma(\nu) = m(A)+m(B)-m(A+B)=\delta.
\end{equation}
Let $E:=\{\nu\in \mathcal M_{\tau}^{ext}(\Gamma): \nu(A)+\nu(B)-\nu(A+B)>0\}$.  Note that Theorem \ref{th:OneExtreme} implies $\nu(A+B)=\dens^{*}(A+B)$ for all $\nu \in E$. Setting $F:=1_{E}$, (\ref{eq:DisintegrateDelta}) implies
\[
\int (\nu(A)+\nu(B)-\nu(A+B))F(\nu)\, d\sigma(\nu)\geq m(A)+m(B)-m(A+B)=\delta,
\]
while $\int \nu(A+B) F(\nu)\, d\sigma(\nu)=\dens^{*}(A+B)\int F\, d\sigma$.  Thus the hypotheses of Lemma \ref{lem:Final} are satisfied, and we get an indicator element $v$ satisfying (\ref{eq:mv-1}) and (\ref{eq:mgv}). \end{proof}

\subsection{Proof of Proposition \ref{prop:FullFolnerStrict}}

Recall the statement:

	Let $\Gamma$ be a discrete abelian group, and $A$, $B\subseteq \Gamma$ nonempty, and let $K$ be the KJ-stabilizer of $A+B$ given by Theorem \ref{th:OneExtreme}. If $\mb \Phi=(\Phi_{i})_{i\in I}$ is a \emph{full} F{\o}lner net satisfying 
\begin{equation}\label{eq:dPhiHypothesis}\delta:=\dens_{\mb \Phi}(A)+\dens_{\mb \Phi}(B)-\dens_{\mb \Phi}(A+B)>0,
\end{equation}
then there is a full F{\o}lner net $\mb \Psi=(\Psi_{i})_{i\in I}$ (with the same directed set $I$ as $\mb \Phi$)  such that $\Psi_{i}\subseteq \Phi_{i}$ for all $i\in I$, 
\begin{equation}\label{eq:dPsi}
	\liminf_{i\in I} |\Psi_{i}|/|\Phi_{i}|\geq [\Gamma:K]\delta;
\end{equation}
\begin{equation}\label{eq:GeneralKneserRepeat}
	\dens_{\mb \Psi}(A+B)=\dens_{\mb \Psi}(A+B+K)=\dens^{*}(A+B+K);
\end{equation}
and
\begin{equation}\label{eq:dAdB}\dens_{\mb \Psi}(A)+\dens_{\mb \Psi}(B) - \dens_{\mb \Psi}(A+B)\geq \delta.
\end{equation}
	
	\begin{proof}
	Let $K$ be the $KJ$-stabilizer of $A+B$ given by Theorem \ref{th:OneExtreme} and write $k$ for $[\Gamma:K]$. Let $\mb \Phi$ be a full F{\o}lner net satisfying (\ref{eq:dPhiHypothesis}).  Since $\mb \Phi$ is full, $m(f):=\lim_{i\in I} \frac{1}{|\Phi_{i}|}\sum_{\gamma\in \Phi_{i}} f(\gamma)$ defines an invariant mean on $\ell^{\infty}(\Gamma)$.  Then (\ref{eq:dPhiHypothesis}) becomes $\delta:=m(A)+m(B)-m(A+B)>0$, so we may apply Theorem \ref{th:MainOneMeanStrict} and find a mean $m'$ which is a $k\delta$-restriction of $m$, with \begin{equation}\label{eq:m'AB} m'(A+B)=m'(A+B+K)=\dens^{*}(A+B+K),
  \end{equation}
\begin{equation}\label{eq:m'Am'B}
  	m'(A)+m'(B)-m'(A+B)\geq \delta.
\end{equation} By definition, this $m'$ can be written as $m'(f):=m(v)^{-1}m(fv)$, for some indicator element $v\in L^{2}(m)$, so that $m(v)\geq k\delta$.  Lemma \ref{lem:CauchyAndInvariantSetsToFolner} then says that $m'$ can be written as $m'(f)=\lim_{i\in I}\frac{1}{|\Psi_{i}|}\sum_{\gamma\in \Psi_{i}} f(\gamma)$, for some full F{\o}lner net $\mb \Psi$ with $\Psi_{i}\subseteq \Phi_{i}$ for all $i\in I$.  The inequality $m(v)\geq k\delta$ is equivalent to $\lim_{i\in I} |\Psi_{i}|/|\Phi_{i}|\geq k\delta$, so we obtain (\ref{eq:dPsi}).  Equation (\ref{eq:GeneralKneserRepeat}) follows from (\ref{eq:m'AB}), and (\ref{eq:dAdB}) follows from (\ref{eq:m'Am'B}).  
	\end{proof}

\section{Proof of Lemma \ref{lem:LevelSetInL2}}\label{sec:LevelProof}
	
	The following is known as Helly's selection lemma.  See \cite[p.~222]{Natanson_TheoryOfFunctions} for a proof.
	
	\begin{lemma}\label{lem:Helly}
		If $I\subseteq \mathbb R$ is an interval and $\phi_{n}:I\to \mathbb [0,1]$ is a sequence of increasing functions, then there is a subsequence $(\phi_{n_{k}})_{k\in \mathbb N}$ converging pointwise to an increasing function $\phi:I\to [0,1]$.
	\end{lemma}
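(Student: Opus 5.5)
The statement is Helly's selection lemma, and I would prove it by the standard diagonal argument followed by a monotone extension.

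First, enumerate the countable set $Q:=I\cap\mathbb Q$ (together with any endpoints of $I$ that lie in $I$) as $q_1,q_2,\dots$. Since each sequence $(\phi_n(q_j))_{n}$ lies in the compact interval $[0,1]$, choose nested subsequences: the first converges at $q_1$, a further subsequence converges at $q_2$, and so on. The diagonal subsequence $(\phi_{n_k})$ then converges at every $q\in Q$. Call the limit $\psi(q)$. Then $\psi:Q\to[0,1]$ is increasing, because pointwise limits preserve non-strict inequalities.

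Second, extend $\psi$ to all of $I$ by setting $\Phi(x):=\sup\{\psi(q):q\in Q,\ q\le x\}$, or $\inf$ over $q\ge x$ if $x$ is the left endpoint. This $\Phi$ is increasing, takes values in $[0,1]$, and agrees with $\psi$ on $Q$.

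Third, take $x$ a point of continuity of $\Phi$ and $\varepsilon>0$. Pick rationals $q<x<q'$ in $I$ with $\Phi(q')-\Phi(q)<\varepsilon$. Monotonicity gives $\phi_{n_k}(q)\le\phi_{n_k}(x)\le\phi_{n_k}(q')$. Taking $\liminf$ and $\limsup$ shows that $\phi_{n_k}(x)$ is eventually within $2\varepsilon$ of $\Phi(x)$. So the subsequence converges at every continuity point of $\Phi$.

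Fourth, an increasing function on an interval has at most countably many discontinuities. Enumerate the discontinuity points of $\Phi$ lying in $I\setminus Q$ and run a second diagonal extraction from $(\phi_{n_k})$, again using compactness of $[0,1]$, so that convergence also holds at each of these points. Let $\phi$ be the pointwise limit of the final subsequence on all of $I$. It is increasing, since each $\phi_n$ is, and it takes values in $[0,1]$.

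The only step needing care is the third: controlling convergence at irrational continuity points via the rational sandwich. Everything else is compactness and countability. I do not expect a serious obstacle.
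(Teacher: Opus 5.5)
Your proof is correct and is the standard argument for Helly's selection lemma: diagonal extraction on the countable dense set $I\cap\mathbb Q$, monotone extension, sandwiching at continuity points, and a second diagonal extraction at the countably many discontinuities. The paper does not prove this lemma itself; it cites Natanson, whose proof proceeds in essentially this way.
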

	The proof of the next lemma will use the triangle inequality for finitely additive measures: $m(A\triangle B)\leq m(A\triangle C)+m(C\triangle B)$, and the identity
	\begin{equation}\label{eq:SymDiff}
		m(A\triangle B) =  2m(A\setminus B)+m(B)-m(A).
	\end{equation}
	
		Lemma \ref{lem:LevelSetInL2} follows immediately from the next lemma. 
	
\begin{lemma}	Let $\Gamma$ be a discrete abelian group and $m\in \mathcal M(\Gamma)$. Let $w\in L^{2}(m)$ be represented by an $L^{2}(m)$-Cauchy sequence $f_{n}: \Gamma \to [0,1]$. 
	
Let $C_{n}(x):=f_{n}^{-1}([0,x])$, and  $\phi_{n}(x):=m(C_{n}(x))$. Then there is a countable set $\Omega\subseteq [0,1]$ such that
\begin{enumerate}
	\item\label{item:phiExists} $\phi(x):=\lim_{n\to\infty} \phi_{n}(x)$ exists and is continuous at every $x\in [0,1]\setminus \Omega$.  

\item\label{item:Ccauchy} For all $b\in [0,1]\setminus \Omega$, we have
\begin{equation}\label{eq:SimpleCauchy}
	\lim_{N\to \infty} \sup_{n,n'\geq N} m(C_{n}(b)\triangle C_{n'}(b))=0.
\end{equation}
\item\label{item:ifwInvariant} If $w$ is $\tau$-invariant, we also have
\begin{equation}\label{eq:TauLevel}
	\lim_{n\to\infty} m(C_{n}(b)\triangle (C_{n}(b)+\gamma))=0 \quad \text{ for all } b\in [0,1]\setminus \Omega.
\end{equation}
\end{enumerate}
Furthermore, for all $b\in [0,1]\setminus \Omega$, setting $D_{n}(b):=f_{n}^{-1}[0,b)$ we have 
\begin{equation}\label{eq:EndPt} 
	\lim_{n\to\infty} m(C_{n}(b)\triangle  D_{n}(b))=0
\end{equation}
The set $\Omega$ depends only on $w$, and not on the choice of $(f_{n})_{n\in \mathbb N}$ representing $w$.  
\end{lemma}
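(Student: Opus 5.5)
The plan is to control the level sets $C_n(x)$ through the Cauchy property of $(f_n)$, using Helly's selection lemma only to single out the countable exceptional set. The basic estimate, for $x<y$ in $[0,1]$ and any $n,n'$, is
\[
m(C_n(x)\setminus C_{n'}(y))\leq m(\{|f_n-f_{n'}|\geq y-x\})\leq \frac{\|f_n-f_{n'}\|_{L^2(m)}^2}{(y-x)^2}.
\]
The first inequality holds because $f_n\leq x$ and $f_{n'}>y$ give $f_{n'}-f_n>y-x$; the second is Chebyshev for the mean $m$. So for $x<y$ we get $\limsup_{n,n'}m(C_n(x)\setminus C_{n'}(y))=0$. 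In particular $\limsup_n\phi_n(x)\leq\liminf_{n'}\phi_{n'}(y)$. Define $\overline\phi(x):=\limsup_n\phi_n(x)$ and $\underline\phi(x):=\liminf_n\phi_n(x)$. Both are increasing in $x$, and by the above $\overline\phi(x)\leq\underline\phi(y)$ whenever $x<y$. Let $\Omega$ be the set of discontinuity points of the increasing function $\overline\phi$; it is countable. (Helly's Lemma \ref{lem:Helly} could be used instead by passing to a convergent subsequence, but the monotone sandwich avoids this.)

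For $x\notin\Omega$, take $x'<x<y$ with $y\downarrow x$ and $x'\uparrow x$. We have $\underline\phi(x)\leq\overline\phi(x)\leq\underline\phi(y)\leq\overline\phi(y)\to\overline\phi(x)$. We also have $\overline\phi(x')\leq\underline\phi(x)$ and $\overline\phi(x')\to\overline\phi(x)$, so $\underline\phi(x)=\overline\phi(x)$. Hence $\phi(x)$ exists and $\phi=\overline\phi$ is continuous at $x$, which is \ref{item:phiExists}. For \ref{item:Ccauchy}, use (\ref{eq:SymDiff}):
\[
m(C_n(b)\triangle C_{n'}(b))=2m(C_n(b)\setminus C_{n'}(b))+\phi_{n'}(b)-\phi_n(b).
\]
The difference term tends to $0$. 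Next, $m(C_n(b)\setminus C_{n'}(b))\leq m(C_n(b)\setminus C_{n'}(y))+\phi_{n'}(y)-\phi_{n'}(b)$ for $y>b$, since $C_{n'}(b)\subseteq C_{n'}(y)$. The first term vanishes in the limit, and the second has limsup at most $\overline\phi(y)-\phi(b)$, which tends to $0$ as $y\downarrow b$ by continuity at $b$. The same argument applied to $D_n(b)$ gives (\ref{eq:EndPt}): $D_n(b)\subseteq C_n(b)$ and $C_n(x')\subseteq D_n(b)$ for $x'<b$, so $m(C_n(b)\setminus D_n(b))\leq\phi_n(b)-\phi_n(x')$, whose limsup is at most $\phi(b)-\overline\phi(x')\to0$.

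For \ref{item:ifwInvariant}, translation gives $C_n(b)+\gamma=(\tau_\gamma f_n)^{-1}[0,b]$. Invariance of $w$ means $\|\tau_\gamma f_n-f_n\|_{L^2(m)}\to0$. So the same Chebyshev estimate, applied with the pair $(f_n,\tau_\gamma f_n)$ in place of $(f_n,f_{n'})$, gives $\limsup_n m(C_n(x)\setminus(C_n(y)+\gamma))=0$ for $x<y$, and symmetrically with the roles swapped. If $m$ is invariant, $m(C_n(b)+\gamma)=\phi_n(b)$, and the same sandwich near a continuity point $b$ gives (\ref{eq:TauLevel}). If $m$ is not assumed invariant, this identity is not available, and I would instead bound $m((C_n(b)+\gamma)\setminus C_n(b))$ directly: use $C_n(b)+\gamma\subseteq (C_n(y)+\gamma)$, compare with $C_n(y)$ via the Chebyshev estimate, and then apply the step-\ref{item:Ccauchy} continuity bound.

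Independence of $\Omega$ from the representing sequence is the step needing the most care. If $(g_n)$ is another representative, then $\|f_n-g_n\|_{L^2(m)}\to0$, and the Chebyshev estimate gives $\limsup_n\phi^f_n(x)\leq\liminf_n\phi^g_n(y)$ and vice versa for $x<y$. Hence the functions $\overline\phi^f$ and $\overline\phi^g$ agree at their common continuity points and interlace. So they have the same right-continuous regularization, which depends only on $w$. I would therefore define $\Omega$ as the discontinuity set of this regularization, which is countable and canonical.
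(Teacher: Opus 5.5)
Your proof is correct and is essentially the paper's argument: the same Markov/Chebyshev estimate yielding $\limsup_n\phi_n(x)\le\liminf_n\phi_n(y)$ for $x<y$, the same monotone sandwich at continuity points, and the same set bookkeeping for the Cauchy, translation, and endpoint statements, with only cosmetic variations (you work with $\overline\phi=\limsup_n\phi_n$ instead of a Helly subsequence limit, and you compare two representatives directly instead of interleaving them). Two small repairs are needed: in the endpoint estimate $\limsup_n(\phi_n(b)-\phi_n(x'))=\phi(b)-\underline\phi(x')$ rather than $\phi(b)-\overline\phi(x')$ (harmless if you take $x'\notin\Omega$), and you must put $0$ into $\Omega$, because the left-hand sandwich is unavailable at $b=0$; indeed $f_n\equiv 0$ gives $\overline\phi\equiv 1$, continuous on $[0,1]$, while $m(C_n(0)\triangle D_n(0))=1$ for every $n$ (the paper's proof overlooks this boundary point as well).
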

	\begin{proof} Let $w$, $f_{n}$, and $C_{n}$ be as in the hypothesis.
				\begin{claim}\label{claim:Delta0}
			For all $b\in \mathbb [0,1]$ and all $\delta>0$,
			\begin{equation}\label{eq:CabEps}
				\lim_{N\to\infty} \sup_{n,n'\geq N} m(C_{n}(b)\setminus C_{n'}(b+\delta))= 0.
			\end{equation}
		\end{claim}
		\begin{proof}[Proof of Claim.] Suppose otherwise, and let $E(n,n'):=C_{n}(b)\setminus C_{n'}(b+\delta)$.  For all $\gamma\in E(n,n')$, we have $f_{n}(\gamma)\in [a,b]$ and $f_{n'}(\gamma)\notin [0,b+\delta]$. Thus $|f_{n}(\gamma)- f_{n'}(\gamma)|\geq \delta$ for all such $\gamma$, and we have $m(|f_{n}-f_{n'}|)\geq \delta m(E(n,n'))$.  Since $(f_{n})_{n\in \mathbb N}$ is $L^{2}(m)$-Cauchy, the left-hand side of the latter inequality converges to $0$ as $\min(n,n')\to\infty$. This proves the claim. \end{proof}
		
		Note that the Claim implies that for all $a<b\leq 1$, we have $\limsup_{n\to\infty} \phi_{n}(a)\leq \liminf_{n\to\infty} \phi_{n}(b)$.
		
		Since each $\phi_{n}$ is increasing, Lemma \ref{lem:Helly} provides a subsequence $(\phi_{n_{k}})$ converging pointwise to an increasing function $\phi$.  We now show that if $\phi$ is continuous at $a$, then $\lim_{n\to\infty} \phi_{n}(a)$ exists.  For such $a$, we have \[\limsup_{n\to\infty} \phi_{n}(a)\leq \liminf_{n\to\infty} \phi_{n}(a+\delta)\leq \phi(a+\delta),\] and $\liminf_{n\to\infty} \phi_{n}(a)\geq \limsup_{n\to\infty} \phi_{n}(a-\delta)\geq \phi(a-\delta)$.  By continuity of $\phi$ at $a$, this implies $\limsup_{n\to\infty} \phi_{n}(a)=\liminf_{n\to\infty} \phi_{n}(a)$.		
		
		We now prove (\ref{eq:SimpleCauchy}), assuming $\phi$ is continuous at $b$.  Let $\varepsilon>0$.  By continuity, choose $\delta>0$ so that $\lim_{n\to\infty} \phi_{n}(b+\delta)$ exists and $\phi(b+\delta)<\phi(b)+\varepsilon$.  Choose $N_{1}$ large enough that $|\phi_{n}(x)-\phi(x)|<\varepsilon$ for all $n\geq N$ and all $x\in \{b,b+\delta\}$.  By the triangle inequality we have $\phi_{n}(b+\delta)<\phi_{n}(b)+2\varepsilon$.
		
		By the Claim, choose $N_{2}$ large enough that  $m(C_{n}(b)\setminus  C_{n'}(b+\delta))<\varepsilon$ for all $n, n'\geq N_{2}$.  Let $N=\max\{N_{1},N_{2}\}$.
		
		Let $n, n'\geq N$.  Then $m(C_{n'}(b+\delta))<m(C_{n'}(b))+2\varepsilon$, so $m(C_{n'}(b)\triangle C_{n'}(b+\delta))<2\varepsilon$.  This implies $|m(C_{n}(b)\setminus C_{n'}(b))- m(C_{n}(b)\setminus C_{n'}(b+\delta))|<2\varepsilon$. Then the Claim implies $m(C_{n}(b)\setminus C_{n'}(b))<2\varepsilon$.  By symmetry we get $m(C_{n'}(b)\setminus C_{n}(b))<2\varepsilon$. This proves (\ref{eq:SimpleCauchy}).
		
		To prove (\ref{eq:TauLevel}), assume $w$ is $\tau$-invariant, let $\varepsilon>0$, and let $\gamma \in \Gamma$. Fix $b\in [0,1]\setminus \Omega$.  Then $C_{n}(b)-\gamma = (\tau_{\gamma}f_{n})^{-1}([0,b])$.  We claim that for all $\delta>0$, 
		\begin{equation}\label{eq:Cnb}
			\lim_{n\to\infty} m((C_{n}(b)-\gamma)\setminus C_{n}(b+\delta))=0.
		\end{equation}
		This follows from the estimate $m(|f_{n}-\tau_{\gamma}f_{n}|)\geq \delta m((C_{n}(b)-\gamma)\setminus C_{n}(b+\delta))$.  
		
		Choose $\delta$ small enough and $N$ large enough  that $m(C_{n}(b+\delta)\triangle C_{n}(b))<\varepsilon$ for every $n\geq N$.  Then \[m((C_{n}(b)-\gamma)\setminus C_{n}(b))-m((C_{n}+\gamma)\setminus C_{n}(b+\delta))< \varepsilon.\]
		Together with (\ref{eq:Cnb}), this implies $m((C_{n}(b)-\gamma)\setminus C_{n}(b))<2\varepsilon$ for large enough $n.$  Likewise we have $m(C_{n}(b)\setminus (C_{n}(b)-\gamma))$ for large enough $n$.  This proves (\ref{eq:TauLevel}).
		
		To prove (\ref{eq:EndPt}), note that for all $\delta>0$, $m(C_{n}(b)\triangle D_{n}(b))\leq \phi(b+\delta)-\phi(b-\delta)$.  Continuity now implies (\ref{eq:EndPt}).
		
		To prove that $\Omega$ depends only on $w$, suppose $(f_{n})_{n\in \mathbb N}$ and $(g_{n})_{n\in \mathbb N}$ are two $L^{2}(m)$-Cauchy sequences representing $w$.  The sequence $(h_{n})_{n\in \mathbb N}=(f_{1},g_{1},f_{2},g_{2},\dots)$ is also Cauchy.  If we define $\psi_{n}(x):=m(h_{n}^{-1}[0,x])$, we see that $\psi_{2n-1}$ is the sequence $\phi_{n}$ defined above.  As above, we have $\psi(x):=\lim_{n\to\infty} \psi_{n}(x)$ exists for all but countably many $x$, and is increasing.  Since $(\phi_{n})_{n\in\mathbb N}$ is a subsequence of $(\psi_{n})_{n\in \mathbb N}$, we have that $\phi$ and $\psi$ both exist at every $x$ where $\phi$ is continuous.  Thus (\ref{eq:SimpleCauchy}) holds with $(g_{n})_{n\in \mathbb N}$ in place of $(f_{n})_{n\in \mathbb N}$, at every $b$ where $\phi$ is continuous.
		  \end{proof}

\printbibliography
\end{document}